\documentclass[11pt,reqno]{amsart}
\usepackage[margin=3.5cm]{geometry}
\usepackage{mlmodern}

\usepackage{amsmath}
\usepackage{amssymb}
\usepackage{mathrsfs}
\usepackage{mathtools}

\usepackage{amsthm}

\theoremstyle{plain}

\newtheorem{mainthm}{Theorem}

\swapnumbers

\newtheorem{theorem}[subsection]{Theorem}
\newtheorem{proposition}[subsection]{Proposition}
\newtheorem{lemma}[subsection]{Lemma}
\newtheorem{corollary}[subsection]{Corollary}

\theoremstyle{definition}
\newtheorem{example}[subsection]{Example}
\newtheorem{definition}[subsection]{Definition}

\theoremstyle{remark}
\newtheorem{remark}[subsection]{Remark}

\numberwithin{equation}{subsection}

\usepackage[title]{appendix}

\usepackage{hyperref}
\usepackage{tikz-cd}
\usepackage{theoremref}
\usepackage{enumitem}

\usepackage{comment}
\usepackage{xcolor}

\usepackage{algorithm}
\usepackage{algpseudocode}
\usepackage{float}

\usepackage{multirow}
\usepackage{bm}

\usepackage{jlcode}

\usepackage{subcaption}
\usepackage{caption}

\usepackage{etoolbox}
\apptocmd{\thebibliography}{\raggedright}{}{}

\newcommand*{\bbB}{\mathbb{B}}

\newcommand*{\bbD}{\mathbb{D}}
\newcommand*{\bbN}{\mathbb{N}}
\newcommand*{\bbP}{\mathbb{P}}

\newcommand*{\bbR}{\mathbb{R}}
\newcommand*{\bbS}{\mathbb{S}}
\newcommand*{\bbZ}{\mathbb{Z}}

\newcommand*{\calC}{\mathcal{C}}
\newcommand*{\calD}{\mathcal{D}}
\newcommand*{\calE}{\mathcal{E}}
\newcommand*{\calF}{\mathcal{F}}

\newcommand*{\calL}{\mathcal{L}}

\newcommand*{\calP}{\mathcal{P}}

\newcommand*{\calS}{\mathcal{S}}
\newcommand*{\calT}{\mathcal{T}}
\newcommand*{\calU}{\mathcal{U}}
\newcommand*{\calV}{\mathcal{V}}
\newcommand*{\calX}{\mathcal{X}}
\newcommand*{\calW}{\mathcal{W}}

\newcommand*{\id}{\mathrm{id}}

\DeclareMathOperator{\codim}{codim}

\DeclareMathOperator{\interior}{int}
\DeclareMathOperator{\gr}{gr}
\DeclareMathOperator{\fl}{fl}

\DeclareMathOperator{\conv}{conv}

\newcommand{\qtA}{%
\begin{tikzpicture}[baseline=-0.6ex,scale=.35,line width=.5pt]
  \draw[fill=white] (0,0) circle (5pt);
\end{tikzpicture}}

\newcommand{\qtB}{%
\begin{tikzpicture}[baseline=-0.6ex,scale=.35,line width=.5pt]
  \fill (0,1) circle (5pt);
  \draw (0,1) -- (0,0);
  \draw[fill=white] (0,0) circle (5pt);
\end{tikzpicture}}

\newcommand{\qtC}{%
\begin{tikzpicture}[baseline=-0.6ex,scale=.35,line width=.5pt]
  \fill (-.8,1) circle (5pt);
  \fill (.8,1) circle (5pt);
  \draw (-.8,1) -- (0,0);
  \draw (.8,1) -- (0,0);
  \draw[fill=white] (0,0) circle (5pt);
\end{tikzpicture}}

\newcommand{\qtD}{%
\begin{tikzpicture}[baseline=-0.6ex,scale=.35,line width=.5pt]
  \fill (-1,1) circle (5pt);
  \fill (0,1) circle (5pt);
  \fill (1,1) circle (5pt);
  \draw (-1,1) -- (0,0);
  \draw (0,1) -- (0,0);
  \draw (1,1) -- (0,0);
  \draw[fill=white] (0,0) circle (5pt);
\end{tikzpicture}}

\newcommand{\qtE}{%
\begin{tikzpicture}[baseline=-0.6ex,scale=.35,line width=.5pt]
  \fill (-1,1) circle (5pt);
  \fill (-.3,1) circle (5pt);
  \fill (.4,1) circle (5pt);
  \fill (1.1,1) circle (5pt);
  \draw (-1,1) -- (0,0);
  \draw (-.3,1) -- (0,0);
  \draw (.4,1) -- (0,0);
  \draw (1.1,1) -- (0,0);
  \draw[fill=white] (0,0) circle (5pt);
\end{tikzpicture}}

\newcommand{\qtF}{%
\begin{tikzpicture}[baseline=-0.6ex,scale=.35,line width=.5pt]
  \fill (0,0.75) circle (5pt);
  \fill (0,1.5) circle (5pt);
  \draw (0,1.5) -- (0,0.75);
  \draw (0,0.75) -- (0,0);
  \draw[fill=white] (0,0) circle (5pt);
\end{tikzpicture}}

\title{Hilbert's 16th problem for arrangements of curves on a surface}
\author{Giacomo Maletto}
\date{}

\begin{document}

\begin{abstract}
    We introduce a combinatorial structure $(n,W,T)$ encoding the topological type of a curve transverse to a fixed cellular arrangement of curves on a compact real surface, in terms of intersection numbers, Dyck words and rooted trees. We apply this formalism to analyze a natural generalization of Hilbert's 16th problem to arrangements of curves. We obtain a complete classification of arrangements of three lines and a cubic, and a partial classification of arrangements of three lines and a quartic. This is achieved using Bézout-type obstructions, Viro's patchworking and translations, and by developing the \texttt{Julia} library \texttt{NWT} to handle large databases of curves.
\end{abstract}

\maketitle

\section{Introduction}

The purpose of this work is to lay the foundations of the study of the topological types of arrangements of curves on real compact surfaces, and then initiate the classification of arrangements of algebraic curves on $\bbP^2(\bbR)$ (what we call \textit{Hilbert's 16th problem} \cite{hilbert}).

A \textbf{$k$-arrangement} of curves on a surface $\calS$ is a $k$-tuple $(\calC_1,\dots,\calC_k)$ of curves where all pairs meet transversely and no three have a common point. Two arrangements $(\calC_1,\dots,\calC_k)$, $(\calC'_1,\dots,\calC'_k)$ are \textbf{topologically equivalent} or have the same \textbf{topological type} if there exists a homeomorphism $\eta\colon\calS\to\calS$ such that $\eta(\calC_i)=\calC'_i$ for all $i$ (for $\calS=\bbP^2(\bbR)$, this coincides with the ambient isotopy type, see Remark \ref{rem:ambient}).

In the first part of the paper (Sections \ref{sec:cell}, \ref{sec:geocom}, \ref{sec:ops}) we analyze the problem in the topological category. We prove that topological types of arrangements $(\calC_1,\dots,\calC_k,\calD)$ where $(\calC_1,\dots,\calC_k)$ is \textbf{cellular} (Definition \ref{def:cellular}) and $\calD$ is any additional transversal curve are encoded by combinatorial objects $(n,W,T)$ called \textbf{combinatorial curves}, where $n$ are integers, $W$ Dyck words and $T$ rooted trees. More precisely:

\begin{mainthm}\label{thm:mainA}
    Fix a cellular arrangement $(\calC_1,\dots,\calC_k)$. Then there is a bijection between arrangements $(\calC_1,\dots,\calC_k,\calD)$ up to topological equivalence and combinatorial curves up to $i$-automorphism (Definition \ref{def:ccc}).
\end{mainthm}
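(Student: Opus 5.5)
The plan is to prove Theorem \ref{thm:mainA} by cutting the surface along the cellular arrangement and recording, cell by cell, how $\calD$ sits in each disc. Since $(\calC_1,\dots,\calC_k)$ is cellular, $\calS$ minus the union of the $\calC_i$ is a disjoint union of open 2-cells $F_1,\dots,F_m$. The boundary of each closure is a polygon whose edges are arcs of the $\calC_i$ between consecutive vertices of the arrangement. Given $\calD$ transverse to the arrangement and avoiding its vertices, I extract three pieces of data. First, the vector $n$ of intersection numbers of $\calD$ with each edge. Second, for each cell $F_j$, the non-closed components of $\calD\cap\overline{F_j}$: these are disjoint properly embedded arcs in a disc, so they form a non-crossing matching of the marked boundary points. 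Reading the points cyclically from a fixed base corner gives a Dyck word $W_j$, since non-crossing matchings correspond to balanced parenthesizations. Third, the closed ovals of $\calD$ inside $F_j$, together with the complementary regions cut out by the arcs. The nesting poset of ovals, hung from the region of the disc containing them, forms a rooted forest. I would expect Definition \ref{def:ccc} to attach these as rooted trees $T$ to the regions of the arc-decomposition. The first step is to check that this data is invariant under homeomorphisms fixing each $\calC_i$ setwise, modulo the ambiguity in the choices. Those choices are base corners, orientations of edges, and the symmetries of the arrangement that preserve each $\calC_i$. I expect $i$-automorphism to be precisely the equivalence generated by these. So the forward map is well defined.

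The second step is surjectivity. Given a combinatorial curve $(n,W,T)$, I place $n_e$ points on each edge $e$. In each disc $\overline{F_j}$, I draw the non-crossing arcs prescribed by $W_j$, which is possible by the Dyck word–matching correspondence. I then insert nested ovals according to $T$ in the prescribed regions. The compatibility condition in Definition \ref{def:ccc}, that the word lengths match the edge counts, ensures the arcs glue across edges to a closed 1-manifold $\calD$ transverse to the arrangement.

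The third step is injectivity, which I expect to be the main obstacle. Suppose $\calD,\calD'$ yield $i$-equivalent data. After composing with a homeomorphism realizing the automorphism, I may assume the data coincide. I build $\eta$ skeleton by skeleton. On vertices, $\eta$ is the identity. On each edge, I take an orientation-compatible homeomorphism sending $\calD\cap e$ to $\calD'\cap e$ in order, which is possible since the counts agree. On each cell, the boundary map extends to the disc carrying arcs to arcs, because equal Dyck words give the same matching. This uses the Schoenflies theorem: the arcs cut the disc into discs with identical combinatorics, and each boundary homeomorphism extends region by region. Within each region, the ovals are disjoint circles with the same nesting tree. Again by Schoenflies and induction on the tree, I extend to a homeomorphism carrying ovals to ovals.

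The delicate points are twofold. One is ensuring the boundary homeomorphisms agree on shared edges; this works because I define them edgewise first. The other is verifying that $i$-automorphisms account for exactly the freedom in the construction, with nothing more and nothing less. Conversely, any topological equivalence must preserve each $\calC_i$ and hence permute cells and edges compatibly. This is the source of the automorphism quotient, and matching it exactly to Definition \ref{def:ccc} is where the care is needed.
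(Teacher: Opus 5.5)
Your overall plan is the paper's. You read off $(n,W,T)$ and note that an index-preserving homeomorphism changes it by the induced $i$-automorphism. You get surjectivity by placing $n_e$ points, drawing the arc system of each $W_f$ and inserting ovals according to the trees. You get injectivity by building a homeomorphism cell by cell with Jordan--Sch\"onflies, after composing with a homeomorphism realizing the automorphism.

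The gap is in your setup. You assume each closure $\overline{F_j}$ is a closed disc bounded by a polygon of edges, and every later step takes place inside $\overline{F_j}$: the ``properly embedded arcs in a disc'', reading $W_j$ around $\partial\overline{F_j}$, and ``the boundary map extends to the disc''. The paper's cellular decompositions are explicitly not required to be regular. A face may pass through a vertex several times, as for $(\calL_x,\calL_z)$ in Example \ref{ex:lines_xyz1}. It may also border the same edge from both sides, as in Figure \ref{fig:klein}, or for a meridian and a longitude on a torus, where the single face has closure equal to the whole torus. In the two-sided case, a component of $\calD$ crossing such an edge stays inside $\overline{F_j}$. So $\calD\cap\overline{F_j}$ is not a family of properly embedded arcs in a disc, $W_j$ cannot be read off there, and there is no disc over which to extend.

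The missing idea is to run every disc argument in the source $\bbD^2$ of the characteristic map $\varphi_f$.
\begin{itemize}
\item You need that $\varphi_f^{-1}(\calD)$ is a curve on $\bbD^2$ whose $n(f)=\sum_i n(e_i)$ endpoints are the preimages of $\calD\cap\partial f$, with an edge occurring twice in $\partial_*(f)$ contributing twice. This is Lemma \ref{lem:curveD2}, and the two-sided edges are exactly where it needs care.
\item $W_f$ is read there, and a homeomorphism built on $\bbD^2$ descends to $\overline{f}$ only if it respects the identifications $\varphi_f$ makes on $\partial\bbD^2$.
\item The paper ensures this in two stages. It first moves $\calD$ by homeomorphisms supported near each edge so that $\calD\cap e=\calD'\cap e$. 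It then uses homeomorphisms of $\bbD^2$ fixing $\partial\bbD^2$ pointwise (Lemma \ref{lem:disk}); these descend because $\varphi_f$ is a quotient map injective on $\bbB^2$.
\end{itemize}
Your edge-first construction can be repaired the same way. Lift the 1-skeleton homeomorphism to $\partial\bbD^2$ through $\varphi_f$; this is well defined because each boundary segment maps homeomorphically onto an edge. Then extend over $\bbD^2$ and push down.

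Finally, the ``delicate point'' about $i$-automorphisms is not a real difficulty. Base corners and edge orientations are fixed once and for all as part of the decomposition; they are not choices to be quotiented out. Allowing independent re-choices of base corners would in general identify non-equivalent curves. The $i$-automorphisms are by definition the combinatorial shadows of index-preserving homeomorphisms. So once the bijection modulo homeomorphisms fixing $(\calV,\calE,\calF)$ is proved (Theorem \ref{thm:main}), Theorem A follows formally.
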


The topology of cellular decompositions of surfaces is the subject of Section \ref{sec:cell}, the bridge between geometry and combinatorics is described in Section \ref{sec:geocom}, while Section \ref{sec:ops} is a technical part of the paper which deals with various operations on combinatorial curves useful in the following.

In the second part of the paper (Sections \ref{sec:obstructions}, \ref{sec:constructions}, \ref{sec:results}) we focus on $\calS=\bbP^2(\bbR)$ and study which classes of $k$-arrangements are $(d_1,\dots,d_k)$-realizable, i.e.\ they have a representative $(\calC_1,\dots,\calC_k)$ where each $\calC_i$ is a nonsingular algebraic curve of degree $d_i$. As in any other incarnation of Hilbert's 16th problem, we can split the problem into \textit{obstructions to realizability} (Section \ref{sec:obstructions}) and \textit{constructions of realizable cases} (Section \ref{sec:constructions}). Our approach is to frame the classification in combinatorial terms: concretely, we have developed the \texttt{Julia} \cite{Julia} library \jlinl{NWT} \cite{NWTrepo} to handle large databases of combinatorial curves, and every operation described in this paper has an implementation in this library. Using it, we focus in particular on $(1,1,1,3)$- and $(1,1,1,4)$-realizability, i.e.\ arrangements of three lines and a cubic or a quartic. The main results (Section \ref{sec:results}) are the following.

\begin{mainthm}\label{thm:mainB}
    There are exactly 119 topological types of arrangements of three unlabeled\footnote{Allowing for permutations of the three lines: see Example \ref{ex:unlabeled} for details.} lines and a cubic.
\end{mainthm}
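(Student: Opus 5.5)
The proof splits into an enumeration on the combinatorial side of Theorem \ref{thm:mainA}, a pruning by obstructions, and explicit constructions for every type that survives.

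First, fix the arrangement of three lines $(L_1,L_2,L_3)$ in general position in $\bbP^2(\bbR)$. Up to homeomorphism there is only one, and it is cellular: it cuts $\bbP^2(\bbR)$ into four triangles, with 3 vertices and 6 edges. By Theorem \ref{thm:mainA}, topological types of arrangements $(L_1,L_2,L_3,\calD)$ are in bijection with combinatorial curves $(n,W,T)$ up to $i$-automorphism. Passing to unlabeled lines (Example \ref{ex:unlabeled}) amounts to further quotienting by the action of $S_3$ permuting the lines.

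Next, I bound the combinatorial data for a nonsingular cubic $\calD$. By Bézout each line meets $\calD$ in at most $3$ real points, counted in the $n$-data, and the number of points on each line has the parity of the degree, so it is $1$ or $3$. A cubic has one pseudoline component $J$ and at most one oval $O$. The Dyck words $W$ record how the arcs of $\calD$ inside each triangle are nested, and the trees $T$ record ovals avoiding the lines; here $T$ has at most one vertex beyond the root. Enumerating all $(n,W,T)$ within these bounds gives a finite list. I would generate it with the \jlinl{NWT} library, reduce modulo $i$-automorphisms and $S_3$, and keep only data consistent with $\calD$ having the topology of a cubic: a single one-sided component plus at most one oval.

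Then I apply the obstructions of Section \ref{sec:obstructions}, which are Bézout-type arguments. The first is an auxiliary line through two chosen points, one on $O$ and one in a suitable region, which must not meet $\calD$ in more than $3$ points. The second is a conic through five points placed on arcs of $\calD$ and on the lines, which must not meet $\calD$ in more than $6$ points. I also use the pencil of conics through the four points determined by pairs of lines. Each discarded candidate gets a certificate of this form. On the constructive side, every remaining candidate needs an explicit realization. I would get these by perturbing reducible cubics (a line times a conic, or three lines) transversally to the triangle, by Viro patchworking with the three lines as coordinate axes of the toric picture, and by the translation operations of Section \ref{sec:ops}, which move a realized curve across a vertex or a line to produce neighboring types. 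The final step is checking that the surviving and constructed lists coincide and have 119 elements.

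The main obstacle is closing the gap between the two lists. There will be borderline types, typically an oval nested in a particular way relative to several triangles, or $J$ crossing a line three times in an unusual order. For each of these, either a clever choice of auxiliary conic must give a contradiction, or an explicit polynomial must be found. For these I would first try patchworking with non-standard subdivisions of the degree-3 triangle and small perturbations of degenerate configurations. If a type still resisted, I would look for a cubic passing through prescribed points on the lines, chosen to force the required intersection pattern.
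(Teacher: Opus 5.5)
This is essentially the paper's proof: via Theorem~\ref{thm:mainA} one enumerates combinatorial curves of cubic curve type that pass the Bézout criteria of \S\ref{sec:obstructions} (order~0, and order~1 on the floatless part), getting 2072 admissible curves. Viro patchworking plus iterated upward translations and the 24 automorphisms realize 2024 of these, and the remaining 3 types up to automorphism are realized by explicit, computer-certified cubic equations, giving $2072\to518\to119$.

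The conic obstructions and perturbed reducible cubics you anticipate are never needed, and two details are off. First, three lines meet in only three points, so your pencil of conics is undefined. Second, the paper's translation (\S\ref{sec:translations}) is the limit $y_0\gg0$, not a local move across a vertex or line, whose outcome would not be determined by $(n,W,T)$.
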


For arrangements of three lines and a quartic we have partial results given the increasing complexity. We are able to show the following.

\begin{mainthm}\label{thm:mainC}
    There are exactly 619 topological types of arrangements of three unlabeled lines and a quartic consisting of a single oval.
\end{mainthm}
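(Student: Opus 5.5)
The plan is to split the count of 619 into a combinatorial enumeration, an obstruction step and a construction step, with \jlinl{NWT} doing the bookkeeping throughout. Three real lines in general position form a cellular arrangement of $\bbP^2(\bbR)$: four triangles, six edges and three vertices. So by Theorem \ref{thm:mainA}, topological types of arrangements (three lines, $\calD$) correspond to combinatorial curves $(n,W,T)$ up to $i$-automorphism. Here $n$ records the intersection numbers of $\calD$ with the six edges. Since $\calD$ is a single oval it is null-homologous, and it must be disjoint from the nest structure in $T$ beyond what one component allows. Quotienting additionally by permutations of the lines, as in Example \ref{ex:unlabeled}, gives the unlabeled count. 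The first step is to enumerate all combinatorial curves with one component, subject to the degree constraints. By Bézout each line meets the quartic in at most $4$ points, with an even number since the oval is two-sided, so the total intersection with each line lies in $\{0,2,4\}$. I would generate every admissible $n$ and every compatible Dyck word system $W$ gluing the arcs inside the four triangles into exactly one closed curve; the trees $T$ are trivial for a single oval. Then I reduce modulo the symmetry group.

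The second step removes unrealizable candidates using the Bézout-type obstructions of Section \ref{sec:obstructions}. The main tools are these:
\begin{itemize}
\item auxiliary lines through two points chosen inside regions the oval is forced to visit, which must meet the quartic in at most $4$ points;
\item conics through five such points, which must meet it in at most $8$ points;
\item convexity-type restrictions.
\end{itemize}
For example, the interior of an oval of a quartic meets any line in at most two intervals. This constrains the order in which intersection points appear along each line, so it prunes the Dyck words. Each obstruction would be implemented as a predicate over combinatorial curves and applied to the whole database.

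The third step realizes every survivor, using Viro patchworking and the translation operations of Section \ref{sec:constructions}. I would start from explicit perturbations $L_1L_2L_3\cdot\ell+\varepsilon F$ or from products of conics, and from patchworked quartics with the three coordinate lines. I would then move the lines (translations, rotations) to sweep through wall-crossings, recording each new type reached via the operations of Section \ref{sec:ops}. The count is complete once the realized set equals the unobstructed set, at $619$ classes.

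I expect the main obstacle to be closing the gap between these two sets: some candidates will survive the simple line and conic tests yet resist easy construction. For those I would first search for a sharper auxiliary curve, such as a conic through points in prescribed regions or pencil arguments. If that fails, I would try targeted patchworks whose charts are adapted to the triangle containing the oval.
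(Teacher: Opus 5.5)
Your overall architecture is the paper's: a candidate set cut out by Bézout-type necessary conditions, a realized set built by patchworking and translations, and the count closed when the two coincide. However, step 1 contains a concrete error that changes the answer.

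\textbf{The gap: the fully floating oval is missing.} The claim that ``the trees $T$ are trivial for a single oval'' is false, because the oval may be disjoint from $\calL_x\cup\calL_y\cup\calL_z$. In that case $n=0$, every $W_f$ is empty and $\gr(\calD)=\varnothing$. The whole curve is then recorded by a floating tree consisting of a root with one child, sitting in one of the four faces. Your enumeration (arcs glued into exactly one closed curve, all trees trivial) returns only the empty curve for $n=0$, so it drops this configuration.

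This configuration is realizable. Take $(x-z)^4+(y-z)^4=\epsilon z^4$ with $0<\epsilon<1$. It is nonsingular, since the partials force $x=y=z$ and then $\epsilon z^3=0$. Its real locus is a single oval around $(1:1:1)$ lying in $\{x,y,z>0\}$, so it is disjoint from the three lines. The $i$-automorphisms $(x:y:z)\mapsto(\pm x:\pm y:z)$ permute the four faces transitively, so these four combinatorial curves form exactly one class. This is precisely the gap between $13673$ and $13669$ combinatorial curves (equivalently $619$ versus $618$ unlabeled types) in Tables \ref{tab:xyz4} and \ref{tab:xyz4floatless}. Executed as written, even if everything else closes perfectly, your procedure certifies $618$, not $619$. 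The fix is to also admit $n=0$ with one nontrivial floating tree.

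\textbf{Obstruction step.} Your auxiliary-line test only has teeth once it is formalized as in \S\ref{subsec:bezout0}. There, an auxiliary line is a closed walk in the region graph crossing each of the three lines exactly once and the quartic an even number $\le4$ of times. The paper's filter is this test (Bézout at order $0$) together with Bézout at order $1$ on the floatless part. It uses no conic, convexity or pencil arguments.

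\textbf{Construction step.} The intermediate wall-crossings in your sweep are not determined by the combinatorial data, as noted in \S\ref{sec:translations}. The operations of Section \ref{sec:ops} do not compute them either, so that step would need explicit polynomials and certified computation. The paper instead uses only the limiting translation $y_0\gg0$, which is purely combinatorial, plus the $24$ projective automorphisms. These are applied to patchworks over all regular triangulations of the degree-$4$ triangle and all sign vectors. For the one-oval type this already matches the admissible set exactly, so the obstacle you anticipate does not arise.
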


Moreover, our classifications shows that in total there are between 9426 and 17624 topological types of arrangements of three unlabeled lines and a quartic. If we restrict to \textbf{floatless} arrangements (those in which every oval of the quartic meets at least one line) we get more precise bounds: the number of such cases lies between 1834 and 1883.

\subsection{An example}

Let us look at any curve, for example
\[\calC=\{(x:y:z)\in\bbP^2(\bbR)\mid f(x,y,z)=0\}\]
with
\[f=x^4+x^2y^2+2xy^3-y^4-2x^3z+xy^2z-y^3z-3x^2z^2-2xyz^2+2y^2z^2+yz^3+z^4.\]
We can plot this curve in $\bbP^2(\bbR)$ together with the axes
\[\calL_x=\{x=0\},\quad\calL_y=\{y=0\},\quad\calL_z=\{z=0\},\]
for example by considering $\{(x,y,z)\in\bbS^2\mid f=0\}$ and projecting onto the $(z=0)$-axis, obtaining Figure \ref{fig:first-example}. If we modify the coefficients of $f$ we get a topologically equivalent arrangement (actually they are rigid isotopic), until either $f=0$ becomes singular or it stops meeting the axes transversely.

\begin{figure}
    \centering
    \begin{minipage}{.3\textwidth}
    \includegraphics[width=1\linewidth]{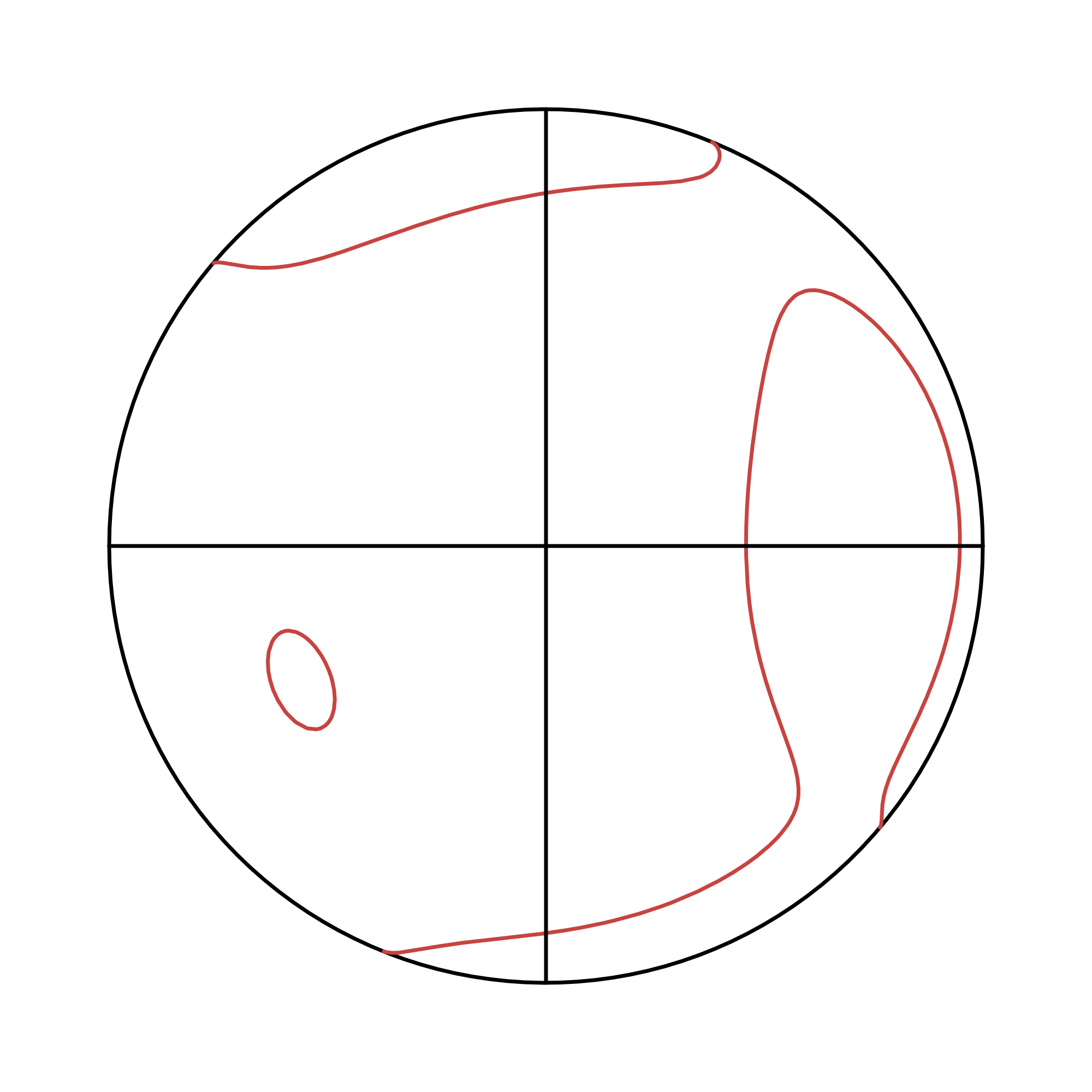}
    \end{minipage}%
    \begin{minipage}{.3\textwidth}
    \includegraphics[width=1\linewidth]{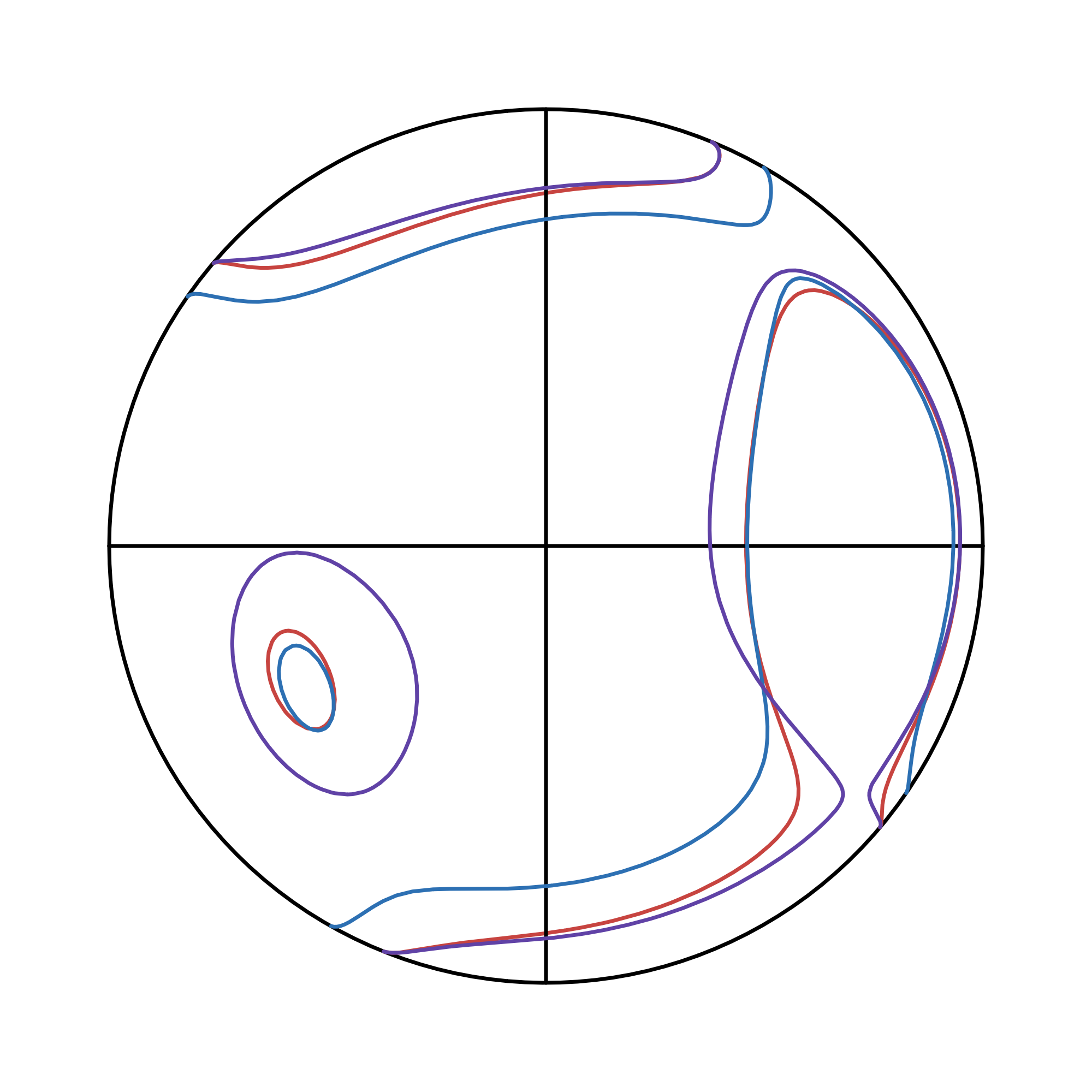}
    \end{minipage}
    \caption{The curve $f=0$ in $\bbP^2(\bbR)$ and some curves with the same topological type.}
    \label{fig:first-example}
\end{figure}

We would like to give a ``label'' to this arrangement, in the same way that the topological type of a single curve is given by a rooted tree and the information of whether it has a pseudoline or not [§\ref{subsubsec:original}]. A natural way to do so is the following: notice first that the base arrangement $(\calL_x,\calL_y,\calL_z)$ divides the ambient space $\bbP^2(\bbR)$ into a cellular complex, with 3 vertices, 6 edges and 4 faces. We name the edges $x,y,z,X,Y,Z$ and the faces $xyz,xYZ,XyZ,XYz$ depending on which edges constitute their boundary. Choose an orientation of the boundary of each face as in Figure \ref{fig:lines_xyz}.


\begin{figure}
    \centering
    \includegraphics[width=1\linewidth]{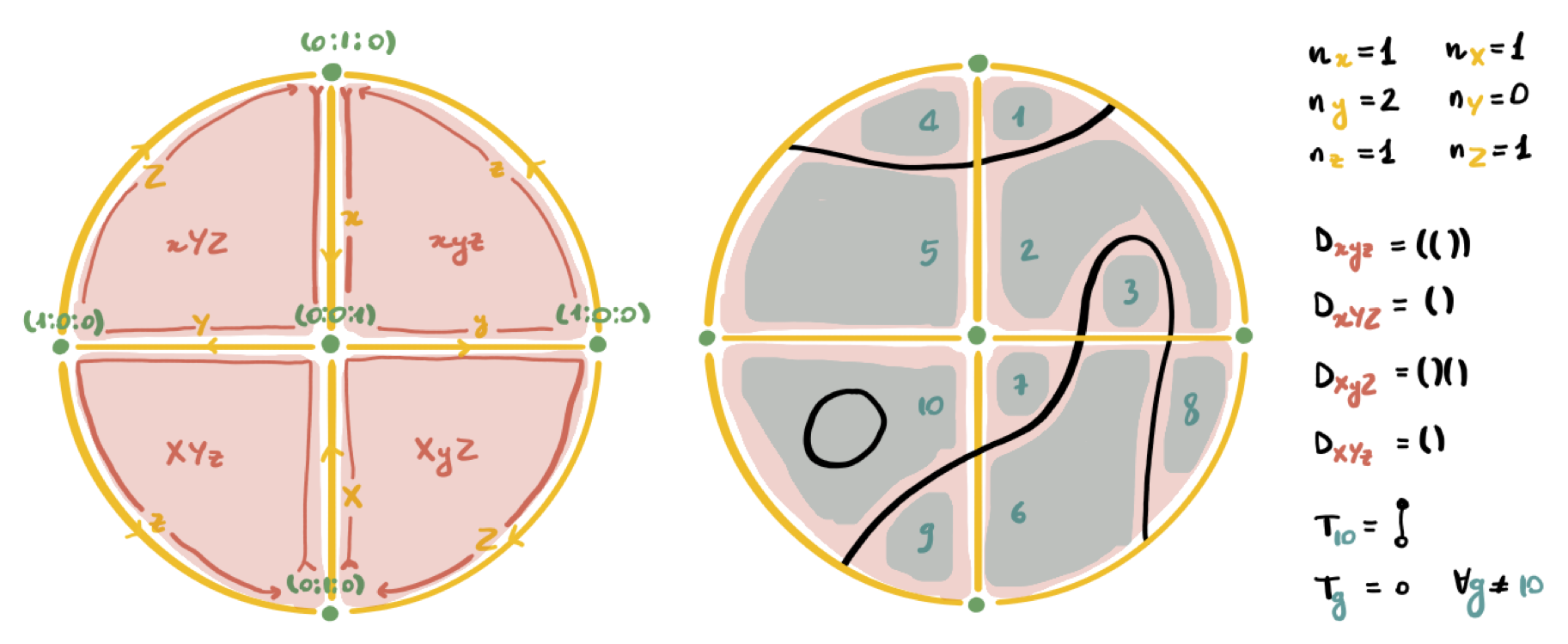}
    \caption{The cell complex induced by $(\calL_x,\calL_y,\calL_z)$ and the combinatorial curve $(n,W,T)$ induced by $(\calL_x,\calL_y,\calL_z,\calC)$.}
    \label{fig:lines_xyz}
\end{figure}

Next, we assign the following combinatorial information $(n,W,T)$ to the curve $\calC$:
\begin{itemize}
    \item for each edge $e$, $n_e=\#(e\cap\calC)$ is the number of intersections of $e$ with $\calC$;
    \item for each face $f$, we construct a Dyck word $W_f$ by starting with the empty word and while traveling along the boundary of $f$, adding a ``('' or a ``)'' to the end of the word whenever a component of $\calC$ is met resp.\ for the first or second time.
\end{itemize}
This takes care of the ovals of $\calC$ which meet the lines $\calL_x\cup\calL_y\cup\calL_z$. We call the union of these ovals the \textit{ground part} $\gr(\calC)$ of the curve, while the union of the components of $\calC$ not meeting any line is the \textit{floating part} $\fl(\calC)$. Now, the arrangement $(\calL_x,\calL_y,\calL_z,\gr(\calC))$ also forms a cellular complex, whose faces we call \textit{ground faces}. We complete the combinatorial description of $\calC$ with the following:
\begin{itemize}
    \item for each ground face $g$, a rooted tree whose vertices represent the connected components of $g\setminus\fl(\calC)$, with the root being the unique such component whose boundary is the boundary of $g$.
\end{itemize}

By Theorem \ref{thm:main}, the information contained in $(n,W,T)$ fully specifies the curve $\calC$ up to $\bbP^2(\bbR)$-homeomorphism fixing $(\calL_x,\calL_y,\calL_z)$. By Corollary \ref{cor:main_eq}, the orbit of $(n,W,T)$ through the $i$-automorphisms of $(\calL_x,\calL_y,\calL_z)$ fully specifies the topological type of the arrangement $(\calL_x,\calL_y,\calL_z,\calC)$.

The collection of data $(n,W,T)$ is called a \textit{combinatorial curve}, transversal to the combinatorial cell complex induced by the base arrangement $(\calL_x,\calL_y,\calL_z)$. In order to record and work with combinatorial curves, the \texttt{Julia} library \jlinl{NWT} is developed. For example, we can input the combinatorial curve we just saw as follows (see Remark \ref{rem:conventions} for an explanation of the conventions used):

\begin{jllisting}
nwt = NWT(lines_xyz, #combinatorial cell complex formed by 3 lines on P^2(R)
          [1, 2, 1, 1, 0, 1], #n
          [[1, 1, 0, 0], [1, 0], [1, 0, 1, 0], [1, 0]], #W, with 1="(", 0=")"
          [(10, [1, 0])]) #T, indicating the only non-trivial rooted tree
                          #is in region 10 with shape [1, 0]
\end{jllisting}

Among other things, from \jlinl{nwt} we can compute the topological type of $\calC$ in $\bbP^2(\bbR)$, whether the combinatorial curve respects certain criteria for realizability based on Bézout's theorem, and even print a picture of the curve as in Figure \ref{fig:first-example-love}. Every example appearing in this paper is also demonstrated computationally in the file \texttt{examples.ipynb}, which also shows how to use the database of curves we created.

\begin{figure}
    \centering
    \includegraphics[width=0.2\linewidth]{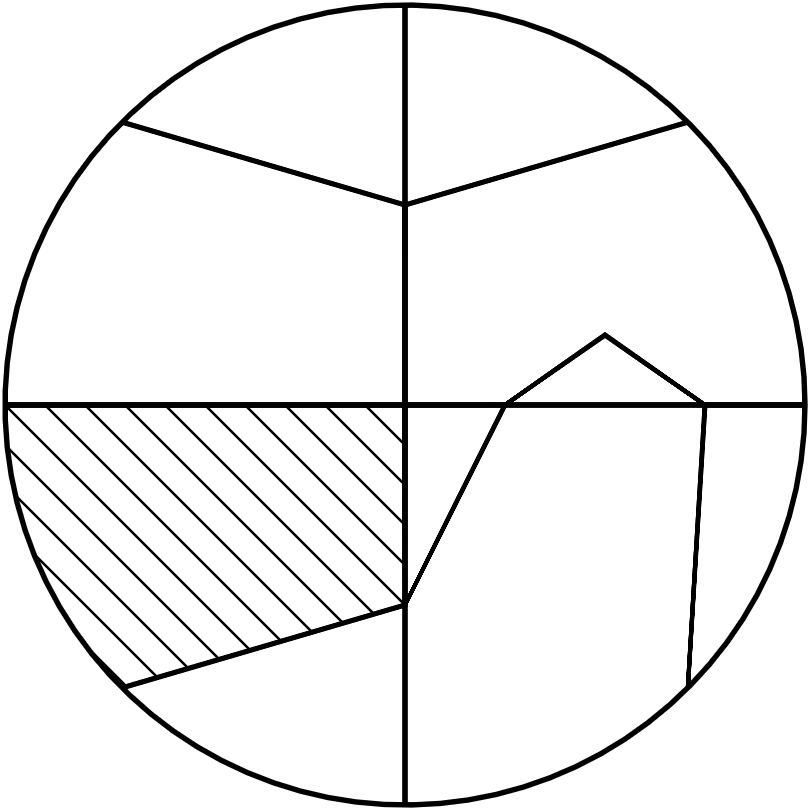}
    \caption{The combinatorial curve \jlinl{nwt} drawn with the library \jlinl{NWT}. The shading represents a region with a single floating oval.}
    \label{fig:first-example-love}
\end{figure}

\subsection{Motivation and history}

The problem we pose has already been studied successfully in many cases:

\subsubsection{$d=(d)$ in $\bbP^2(\bbR)$: Hilbert's original 16th problem}\label{subsubsec:original}

In its English translation published \cite{hilbert} in 1902, the first part of Hilbert's 16th problem is stated as follows:

\begin{quote}
    16. PROBLEM OF THE TOPOLOGY OF ALGEBRAIC CURVES AND SURFACES.
    
    The maximum number of closed and separate branches which a plane algebraic curve of the $d$-th order can have has been determined by Harnack. There arises the further question as to the relative position of the branches in the plane. [...] \textit{A thorough investigation of the relative position of the separate branches when their number is the maximum seems to me to be of very great interest, and not less so the corresponding investigation as to the number, form, and position of the sheets of an algebraic surface in space.}
\end{quote}

In our terms, the problem can be framed as follows: find out which classes of $1$-arrangements are $(d)$-realizable.

A nonsingular algebraic curve of degree $d$ (or more generally, a complete 1-dimensional topological submanifold) $\calC\subset\bbP^2(\bbR)$ admits a particularly easy description \cite{wilson1978hilbert}. Its connected components are all homeomorphic to $\bbS^1$. Since the fundamental group $\pi_1(\bbP^2(\bbR))=\bbZ/2\bbZ$ has two elements, we have two cases:
\begin{itemize}
    \item those connected components that are non-trivial as elements of $\pi_1(\bbP^2(\bbR))$ are called \textbf{pseudolines}, and there are none if $d$ is even and one if $d$ is odd;
    \item the connected components trivial in $\pi_1(\bbP^2(\bbR))$ are called \textbf{ovals}, and each of them separates $\bbP^2(\bbR)$ into two connected components, one of which is orientable and is called the interior of the oval.
\end{itemize}
A \textbf{region} is a connected component of the complement of $\calC$. Based on the observation that there is always a unique region whose closure is non-orientable (which is the complement of all interiors of ovals), we can associate to the curve $\calC$ a rooted tree, whose vertices are the regions, which are connected by an edge if the closures of the two corresponding regions intersect, and with root the unique non-orientable region. This tree, together with the binary information of whether the curve has a pseudoline or not, completely determines the topological type of the curve, and is commonly called its \textit{real scheme} \cite{rokhlin1978complex}. Our construction of combinatorial curves is a generalization of this phenomenon to larger arrangements.

\begin{figure}
    \centering
    \includegraphics[width=0.75\linewidth]{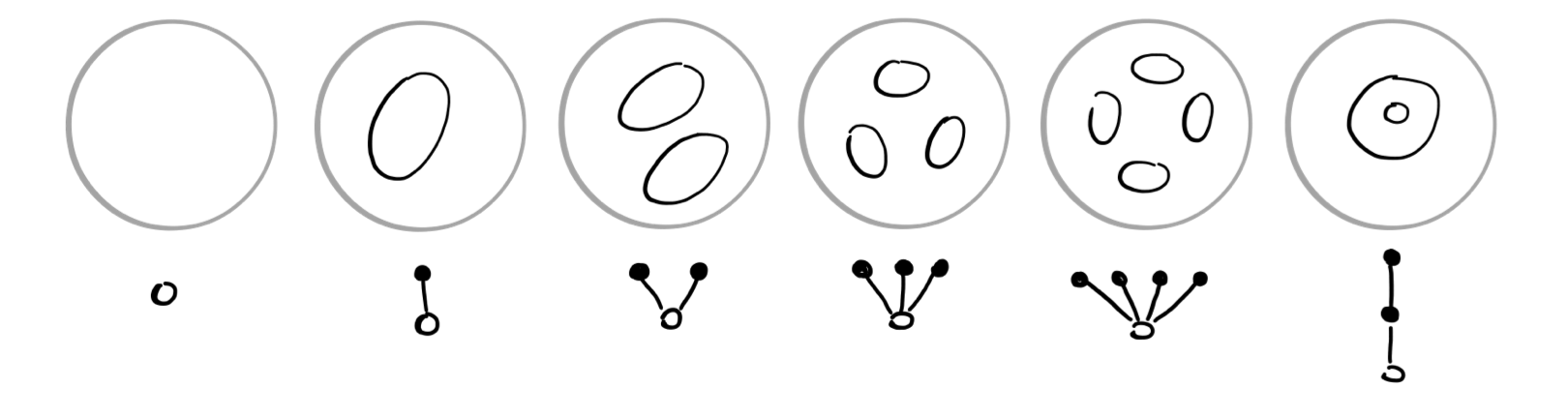}
    \caption{Topological types of quartic curves.}
    \label{fig:quartics}
\end{figure}

\begin{figure}
    \centering
    \includegraphics[width=0.9\linewidth]{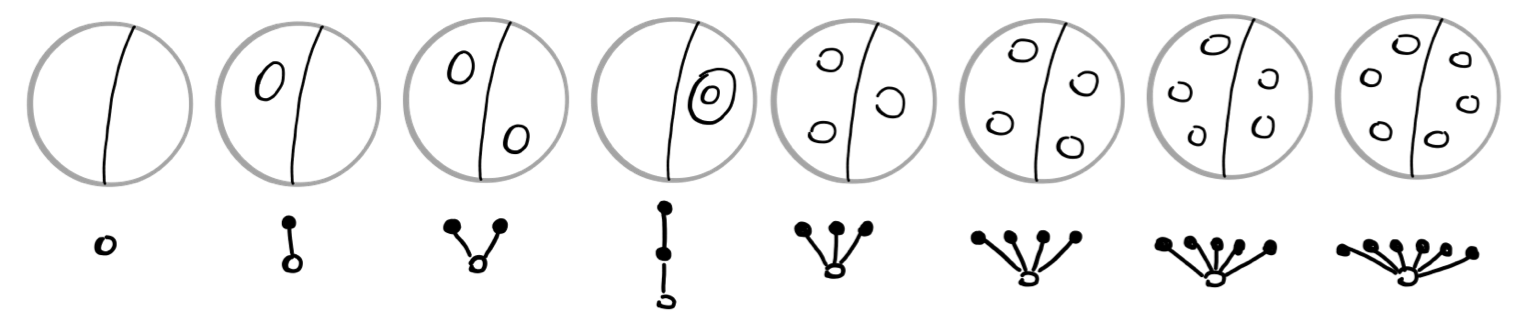}
    \caption{Topological types of quintic curves.}
    \label{fig:quintics}
\end{figure}

For example, for $d=4$ there are 6 topological types, namely the quartics consisting of 0,1,2,3 or 4 non-nested ovals, or 2 nested ovals \cite{plaumann2011quartic} (shown in Figure \ref{fig:quartics}).

Hilbert himself worked on the case $d=6$ of sextic curves on the plane, arriving ``by a complicated process'' to a classification of the possible topological types -- however, his arguments were incomplete, and only in 1969 the classification was completed by Gudkov, by fixing a mistake he himself made in 1954 \cite{viro2008sixteenth}.

The classification for $d=7$ was completed by Viro \cite{ya1980curves} in 1979, for $d=8$ it is still open but much is known \cite{orevkov2002classification} while for $d\ge9$ our picture is very incomplete.

\subsubsection{$d=(d)$ in other surfaces}

There are many works on Hilbert's 16th problem dealing with real curves on surfaces other than $\mathbb{P}^2(\bbR)$ \cite{ivanovich2025chambers}, for instance on quadrics \cite{degtyarev1999rigid,manzaroli2021real}, Hirzebruch and ruled surfaces \cite{zvonilov2003isotopies,degtyarev2011real} and real del Pezzo surfaces \cite{manzaroli2022real}. Additionally, Viro's Patchworking Construction \cite{viro2006patchworking} works for any toric surface.

We note that the techniques described in this article also work for any other nonsingular surface, but a similar classification for e.g.\ curves of a given bidegree $(d,e)$ on $\bbP^1(\bbR)\times\bbP^1(\bbR)$ or other toric varieties is relegated to future works.

\subsubsection{$d=(1,1,\dots,1)$: Folkman-Lawrence topological representation theorem}

The Folkman-Lawrence topological representation theorem \cite{folkman1978oriented} gives a correspondence between arrangements of pseudolines and rank 3 oriented matroids. The question of which of these classes are realizable from arrangements of lines becomes the question of which oriented matroids are stretchable, which is a NP-hard problem \cite{toth2017handbook}[§ 5.3].

Our article suggests another way to compute the possible pseudoline arrangements $(\calL_1,\dots,\calL_{k+1})$ given a fixed pseudoline arrangement $(\calL_1,\dots,\calL_k)$: these correspond exactly to the floatless combinatorial curves on $(\calL_1,\dots,\calL_k)$ whose topological type is that of a pseudoline, and respecting $(1,\dots,1)$-Bézout at order 0. The stretchability question is thus equivalent to the question of $(1,\dots,1)$-realizability of these configurations.

\subsubsection{Toric varieties: GKZ's refined Hilbert problem}

In \cite{gelfand1994discriminants}[§ 11.5] the ``refined Hilbert problem'' is introduced. Given a finite subset $A\in\bbZ^{k-1}$ and $Q=\conv(A)$ its convex hull, we denote by $\bbR^A$ the space of polynomials of the form $\sum_{\omega\in A}a_\omega x^\omega$ where $x^\omega=x_1^{\omega_1}\dots x_{k-1}^{\omega_{k-1}}$. Any non-zero $f\in\bbR^A$ naturally defines a hypersurface $Z_f$ inside the toric variety $X_A$. Suppose $X_A$ is smooth. The subset
\[\nabla_A(\bbR)=\{f\in\bbR^A\mid\text{$Z_f\subset X_A$ is singular}\}\]
forms an algebraic hypersurface in $\bbR^A$, called the \textit{$A$-discriminantal variety}, and the \textit{natural Hilbert problem} is the study of the connected components of the complement $\bbR^A\setminus\nabla_A(\bbR)$, corresponding to the study of the hypersurfaces $Z_f(\bbR)$ up to rigid isotopy in $X_A(\bbR)$. Letting
\[\bbR^A_{\text{gen}}=\{f\in\bbR^A\mid Z_f\pitchfork X^0(\Gamma)\text{ for all faces $\Gamma\subseteq Q$}\}\]
where $\pitchfork$ denotes transverse intersection (i.e.\ $X\pitchfork Y$ if $\codim(X\cap Y)=\codim(X)+\codim(Y)$ and $X\cap Y$ is smooth) and $X^0(\Gamma)$ is the orbit in $X_A$ associated to a face $\Gamma$, the \textit{refined Hilbert problem} is the study of the connected components of $\bbR^A_{\text{gen}}$ and corresponds to the study of arrangements formed by $Z_f(\bbR)$ and the orbits $X^0(\Gamma)$ intersecting transversely up to rigid isotopy.

For an integer $d>0$, if $A=\conv\{(0,0),(d,0),(0,d)\}\cap\bbZ^2$ then $X_A(\bbR)\simeq\bbP^2(\bbR)$ and the orbits $X^0(\Gamma)$ correspond to the points $(1:0:0),(0:1:0),(0:0:1)$, the axes $\{x=0\},\{y=0\},\{z=0\}$ and $\bbP^2$ itself. The refined Hilbert problem thus becomes the classification up to rigid isotopy of nonsingular projective algebraic curves of degree $d$ intersecting $\{xyz=0\}$ transversely. Thus, our analysis of $(1,1,1,d)$-realizability is a weakening of this problem to up to topological equivalence (which in $\bbP^2(\bbR)$ is equivalent to ambient isotopy) instead of rigid isotopy.

\subsection{Notation}
    The open and closed unit balls in $\bbR^n$ are denoted respectively $\bbB^n,\bbD^n$. The real plane $\bbR^2$ is parametrized by coordinates $(x,y)$, while the projective real plane $\bbP^2(\bbR)$ has homogeneous coordinates $(x:y:z)$.

    The set $\{1,2,\dots,n\}$ is denoted $[n]$.

    A tuple/list of elements of a set $A$ is an element $a\in A^{<\infty}=\bigcup_{i=0}^\infty A^i$. The only difference between a tuple and a list is that to denote its components, the notation $a_i$ is used for the former and $a[i]$ for the latter. The concatenation of two tuples/lists $a,a'\in A^{<\infty}$ is denoted $a^\frown a'$.

    A Dyck word is a list of elements of $\{\text{``(''},\text{``)''}\}$ in which the same number of ``('' and ``)'' is used, and at no point of the list there are more preceding ``)'' than ``(''.

    Borrowing dependent pairs from type theory, for a set $A$ and an indexed family of sets $\{B_a\}_{a\in A}$ we use the following notation:
    \[\sum_{a\in A}B_a:=\{(a,b)\mid a\in A,b\in B_a\}.\]

\subsection{Acknowledgments and AI usage}
I thank my advisor Sandra Di Rocco for the many meetings and discussions we had, and for her patience through the whole project. The generative AI models ChatGPT and Claude were used to implement some small code functions, research the literature, assist with the writing, proof-reading and for finding a realization to a particular combinatorial curve (§\ref{subsec:cubic}). ChatGPT was also helpful for coming up with the proofs of some of the topological lemmas.

\section{Cellular decompositions of a surface}\label{sec:cell}

Our treatment of surfaces and curves is purely topological - smoothness is never mentioned - although we require submanifolds to be ``neat'' in the sense of \cite{hirsch2012differential}[1, §4]. The given topological definition of transverse intersection reflects the definition of neat submanifold, and is weaker than the standard $C^1$ definition involving tangent spaces, but it is sufficient for our needs.

\begin{definition}
    A \textbf{surface} $\calS$ is a compact 2-dimensional topological manifold, with boundary $\partial\calS$ and interior $\interior\calS$. A \textbf{curve on $\calS$} is a closed set $\calC\subset\calS$ which is locally flat, i.e.\ for all $p\in\calC$ there exists an open neighborhood $\calU\ni p$ and a homeomorphism
    \[(\calU,\calC\cap\calU)\simeq
        \begin{cases}
            (\bbR^2,\{y=0\})&\quad\text{if $p\in\interior\calS$}\\
            (\{x\ge0\},\{y=0\})&\quad\text{if $p\in\partial\calS$}.
        \end{cases}\]
    A curve is not required to be connected, i.e.\ it can be empty or can have multiple components, which are necessarily homeomorphic to $\bbS^1$ if disjoint with $\partial\calS$ (in which case they are called \textbf{ovals}), or to $\bbD^1$ with the two distinct boundary points in $\partial\calS$.

    Two curves $\calC,\calD$ on $\calS$ have \textbf{transverse intersection} (in symbols $\calC\pitchfork\calD$) if
    \begin{itemize}
        \item the intersection $\calC\cap\calD$ is finite;
        \item for every $p\in\calC\cap\calD$, there exists an open neighborhood $\calU\ni p$ and a homeomorphism $(\calU,\calC\cap\calU,\calD\cap\calU)\simeq(\bbR^2,\{x=0\},\{y=0\})$.
    \end{itemize}
    A collection of curves $\calC_1,\dots,\calC_k$ on $\calS$ have \textbf{transverse intersection} if they have pairwise transverse intersection and no three of them have a common point.

    A \textbf{$k$-arrangement} on $\calS$ is a $k$-tuple $(\calC_1,\dots,\calC_k)$ of curves on $\calS$ that intersect transversely. The \textbf{regions} determined by the arrangement are the connected components of $\interior\calS\setminus\bigcup_i\calC_i$.
\end{definition}

The following definition is a slight strengthening of the usual definition of CW complex \cite{hatcher2002algebraic}[Appendix] of dimension 2.

\begin{definition}
    A \textbf{cellular decomposition} of $\calS$ is a tuple $(\calV,\calE,\calF)$ coming from a filtration
    \[\varnothing=\calX_{-1}\subset\calX_0\subset\calX_1\subset\calX_2=\calS\]
    of closed sets, such that
    \begin{itemize}
        \item $\calV=\calX_0$ is the set of \textbf{vertices},
        \item $\calE$ is the set of connected components of $\calX_1\setminus\calX_0$, which are called \textbf{edges},
        \item $\calF$ is the set of connected components of $\calX_2\setminus\calX_1$, which are called \textbf{faces},
    \end{itemize}
    together with fixed continuous maps $\varphi_e\colon\bbD^1\to\overline{e}$ and $\varphi_f\colon\bbD^2\to\overline{f}$ called \textbf{characteristic maps} such that
    \begin{itemize}
        \item $\varphi_e|_{\bbB^1}\colon \bbB^1\to e$ is a homeomorphism, $\varphi_e(\bbD^1)=\overline{e}$ and $\varphi_e(\partial \bbD^1)\subseteq\calV$ for all edges $e\in\calE$;
        \item $\varphi_f|_{\bbB^2}\colon \bbB^2\to f$ is a homeomorphism, $\varphi_f(\bbD^2)=\overline{f}$ and $\varphi_f(\partial \bbD^2)$ is a union of elements of $\calE$ and $\calV$ for all faces $f\in\calF$.
    \end{itemize}
    We assume $\calV,\calE,\calF$ to be finite (which implies the ``weak topology'' part of the definition of CW complexes \cite{hatcher2002algebraic}[Page 521]) and that $\partial\calS$ is the union of elements of $\calV$ and $\calE$.
    
    Furthermore, given a characteristic map $\varphi_f$, consider the boundary $\partial f=\varphi_f(\partial \bbD^2)$ of a face $f\in\calF$. Let
    \begin{align*}
        \partial\varphi_{\bbD^2}\colon[0,1]&\to\partial\bbD^2\\
        t&\mapsto(\cos(2\pi t),\sin(2\pi t)).
    \end{align*}
    and
    \begin{align*}
        \partial\varphi_f:=\varphi_f\circ\partial\varphi_{\bbD^2}\colon[0,1]&\to\partial f\\
        t&\mapsto\varphi_f((\cos(2\pi t),\sin(2\pi t))).
    \end{align*}
    We do not require the CW complex to be regular; however, we further assume the following:
    \begin{itemize}
        \item $\partial\varphi_f(0)\in\calV$ is a vertex for all faces $f\in\calF$;
        \item $\varphi_f^{-1}(\calV)$ is a finite set of points;
        \item through $\varphi_f$, each component of $\partial\bbD^2\setminus\varphi_f^{-1}(\calV)$ maps homeomorphically onto an edge.
    \end{itemize}

    A curve $\calD$ on $\calS$ is \textbf{transverse to $(\calV,\calE,\calF)$} if $\calD\cap\calV=\varnothing$ and $\calD$ has transverse intersection with $\bigcup\calE$ inside $\interior\calS\setminus\calV$ (where $\bigcup\calE$ forms a curve). The \textbf{regions} determined by $\calD\pitchfork(\calV,\calE,\calF)$ are the connected components of $\interior\calS\setminus(\bigcup\calE\cup\calV\cup\calD)$.
\end{definition}

\begin{remark}
    As for general CW complexes, each characteristic map $\varphi_f$ is a closed quotient map, and $\varphi_f(\partial\bbD^2)=\partial f$.
\end{remark}

Given a $k$-arrangement $(\calC_1,\dots,\calC_k)$, let $\calC_0=\partial\calS$ and consider the tuple $(\calV,\calE,\calF)$ where
\begin{itemize}
    \item the vertices $\calV$ are the points of
    \[\bigcup_{0\le i<j\le k}\calC_i\cap\calC_j;\]
    \item the edges $\calE=\calE_0\cup\dots\cup\calE_k$ are labeled from 0 to $k$, where the elements of $\calE_i$ are the connected components of $\calC_i\setminus\calV$;
    \item the faces $\calF$ are the connected components of $\calS\setminus (\bigcup_{i=0}^k\calE_i\cup\calV)$.
\end{itemize}
If $e\in\calE_i$ we say that $e$ has \textbf{index} $i$.


\begin{definition}\label{def:cellular}
    An arrangement $(\calC_1,\dots,\calC_k)$ is \textbf{cellular} if the induced tuple $(\calV,\calE,\calF)$ is a cellular decomposition.
\end{definition}

\begin{example}\label{ex:lines_xyz1}
    On $\bbP^2(\bbR)$ with homogeneous coordinates $(x:y:z)$ consider the lines
    \[\calL_x=\{x=0\},\quad\calL_y=\{y=0\},\quad\calL_z=\{z=0\}.\]
    Then the arrangement of three lines $(\calL_x,\calL_y,\calL_z)$ and of two lines $(\calL_x,\calL_z)$ are cellular. They form the cellular decompositions resp.\ $(\calV^{(3)},\calE^{(3)},\calF^{(3)})$, $(\calV^{(2)},\calE^{(2)},\calF^{(2)})$ shown in Figure \ref{fig:lines_xyz-lines_xz-lines_z}, where the orientations of the edges and the faces are also fixed. The arrangement of one line $(\calL_z)$ is not cellular, since the only edge has no boundary.

    \begin{figure}
        \centering
        \includegraphics[width=1\linewidth]{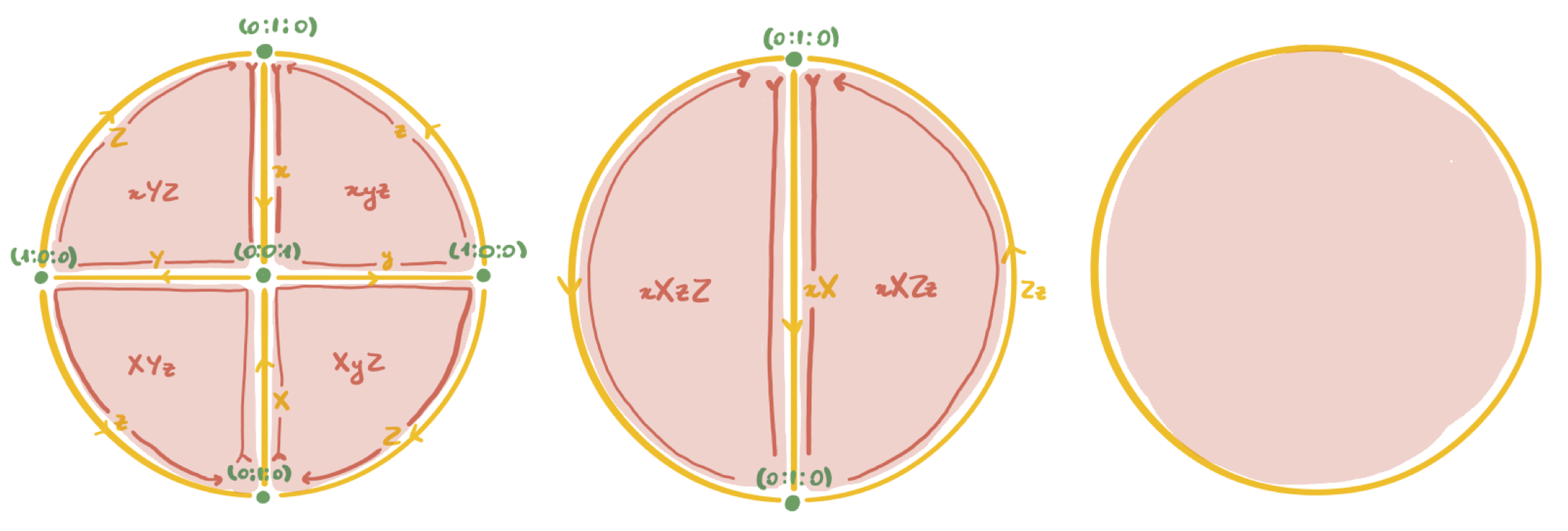}
        \caption{The cellular decompositions $(\calV^{(3)},\calE^{(3)},\calF^{(3)})$, $(\calV^{(2)},\calE^{(2)},\calF^{(2)})$ and the arrangement of one line.}
        \label{fig:lines_xyz-lines_xz-lines_z}
    \end{figure}
\end{example}

\begin{definition}
    A homeomorphism $\eta\colon\calS\to\calS$ is \textbf{compatible with $(\calV,\calE,\calF)$} if it restricts to permutations of these sets, and furthermore it \textbf{fixes $(\calV,\calE,\calF)$} if these permutations are the identity and for all edges $e\in\calE$ the characteristic map $\varphi_e$ has the same orientation as $\eta\circ\varphi_e$. Two $k$-arrangements $(\calC_1,\dots,\calC_k),(\calC'_1,\dots,\calC'_k)$ are \textbf{topologically equivalent} if there exists a homeomorphism $\eta\colon\calS\to\calS$ such that $\eta(\calC_i)=\calC'_i$ for all $i\in[k]$.
\end{definition}

\subsection{Adding a curve to a cellular decomposition}\label{subsec:adding}

Let $\calD\subset\calS$ be a curve transverse to a cellular decomposition $(\calV,\calE,\calF)$ on $\calS$. Let $\gr(\calD)$ be the union of the components of $\calD$ meeting $\bigcup\calE$, and let $\fl(\calD)=\calD\setminus\gr(\calD)$. Consider the tuple $(\calV_{\gr(\calD)},\calE_{\gr(\calD)},\calF_{\gr(\calD)})$ of $\calS$ where
\begin{itemize}
    \item $\calV_{\gr(\calD)}=\calV\cup\big(\gr(\calD)\cap\bigcup\calE\big)$;
    \item $\calE_{\gr(\calD)}$ are the connected components of $\bigcup\calE\cup\gr(\calD)\setminus\calV_{\gr(\calD)}$;
    \item $\calF_{\gr(\calD)}$ are the connected components of $\calS\setminus(\bigcup\calE_{\gr(\calD)}\cup\calV_{\gr(\calD)})$.
\end{itemize}

\begin{theorem}\label{thm:gr}
    $(\calV_{\gr(\calD)},\calE_{\gr(\calD)},\calF_{\gr(\calD)})$ forms a cellular decomposition of $\calS$ (up to a choice of characteristic maps).
\end{theorem}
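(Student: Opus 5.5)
The plan is to build the refined decomposition one face at a time, working in the closed disk $\bbD^2$ through the characteristic maps $\varphi_f$. Fix a face $f\in\calF$ and set $\Gamma_f=\varphi_f^{-1}(\gr(\calD)\cap\overline f)$. Since $\calD$ avoids $\calV$ and meets $\bigcup\calE$ transversely in finitely many points, $\gr(\calD)\cap\partial f$ is finite. By local flatness and transversality, $\gr(\calD)\cap f$ is a properly embedded $1$-manifold in the open cell $f$. Its closure meets $\partial f$ only in those finitely many transverse points. Pulling back by $\varphi_f$, which is a homeomorphism on $\bbB^2$, I expect $\Gamma_f\cap\bbB^2$ to be a finite disjoint union of open arcs, each of whose closure is an arc with two distinct endpoints on $\partial\bbD^2$. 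The points of $\partial\bbD^2$ mapping to $\calV_{\gr(\calD)}$ are then the old finite set $\varphi_f^{-1}(\calV)$ together with the finitely many preimages of the new vertices.

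Finiteness of the arc components needs an argument. A compact component of $\gr(\calD)$ that meets $\bigcup\calE$ cannot lie entirely inside $f$, which is why floating ovals are excluded. Every other piece of $\gr(\calD)\cap f$ is an arc of some component of $\gr(\calD)$ between two consecutive intersection points with $\bigcup\calE$, and there are only finitely many such points. Transversality at an intersection point $p\in e$ gives a chart $(\bbR^2,\{x=0\},\{y=0\})$. Pulled back through $\varphi_f$ near each preimage of $p$, the arc therefore approaches $\partial\bbD^2$ from one side and lands at a single point. Near a point $p$ of an edge $e$ that is traversed twice by $\partial f$, a half-chart may belong to either preimage, so I have to check that the two half-disks of the chart correspond to the two local sheets of $\varphi_f$ near $p$.

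The combinatorial core is a Jordan–Schoenflies argument in $\bbD^2$. Take finitely many disjoint chords, meaning arcs in $\bbD^2$ with interiors in $\bbB^2$ and endpoints on $\partial\bbD^2$. Cutting along them, by induction on the number of chords, splits $\bbD^2$ into closed pieces, each homeomorphic to a closed disk. For the inductive step, a single chord splits a closed disk into two closed disks by the Schoenflies theorem applied to the Jordan curve formed by the chord and one boundary arc. Each new face $g\subset f$ is the image of the interior of one piece. I define $\varphi_g$ as $\varphi_f$ composed with a homeomorphism $\bbD^2\to$ piece. It remains to verify the extra axioms for $\varphi_g$. The boundary of the piece contains at least one chord endpoint or old vertex, and I rotate so that $\partial\varphi_g(0)$ is a vertex. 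The preimage of the vertices is finite. Each complementary arc maps homeomorphically onto an edge of $\calE_{\gr(\calD)}$: either a sub-arc of an old edge between consecutive new vertices, or a chord, which $\varphi_f$ maps injectively.

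Edges are handled directly. A new edge is either a component of $e\setminus\gr(\calD)$ for some $e\in\calE$, parametrized by restricting $\varphi_e$ and reparametrizing, or a component of $\gr(\calD)\setminus\bigcup\calE$, which by the above is an open arc whose closure has endpoints in $\calV_{\gr(\calD)}$. That closure may be a loop if both endpoints coincide, which is allowed since regularity is not required. The faces $\calF_{\gr(\calD)}$ are exactly the images of the pieces' interiors, because they partition $\calS\setminus(\bigcup\calE_{\gr(\calD)}\cup\calV_{\gr(\calD)})$ and are connected, open and pairwise disjoint. The boundary condition $\partial\calS\subseteq\bigcup\calV\cup\bigcup\calE$ persists.

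I expect the main obstacle to be the local analysis at boundary points where $\varphi_f$ is not injective, namely edges or vertices appearing several times on $\partial f$. There one must make sure that the pulled-back arcs really have well-defined endpoints and stay disjoint in $\bbD^2$. I would handle this using the third assumption on $\varphi_f$: each component of $\partial\bbD^2\setminus\varphi_f^{-1}(\calV)$ maps homeomorphically onto an edge. By compactness, each preimage point of $p$ then has a small half-disk neighbourhood in $\bbD^2$ that $\varphi_f$ embeds, so all the analysis can be done in these local embedded pieces.
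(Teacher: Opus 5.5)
Your proposal is correct and follows essentially the paper's route. You pull $\gr(\calD)$ back through $\varphi_f$ to a finite system of disjoint chords in $\bbD^2$; the local analysis at doubly traversed edges that you flag is exactly what the paper isolates as Lemma~\ref{lem:curveD2}. You then split by induction on the number of chords via Jordan--Sch\"onflies (Lemma~\ref{lem:jordan}) and take restrictions of $\varphi_f$ to the closed pieces as the new characteristic maps, and you are slightly more explicit than the paper about edge characteristic maps, the base vertex $\partial\varphi_g(0)$, and possible loop edges.
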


We say that $\gr(\calD)$ is the \textbf{ground part of $\calD$} and that $\fl(\calD)$ is the \textbf{floating part of $\calD$}. Similarly, the connected components of $\calS\setminus(\calV\cup\calE\cup\calD)$ are divided into $\textbf{ground regions}$ if their closure meet $\bigcup\calE$, and \textbf{floating regions} otherwise. We call \textbf{ground faces} the elements of $\calF_{\gr(\calD)}$: each ground face contains exactly one ground region and some floating regions, whose arrangement will be encoded in the floating trees in Definition \ref{def:combcurve}. If $\fl(\calD)$ is empty, we say that $\calD$ is \textbf{floatless}.

    \begin{figure}
        \centering
        \includegraphics[width=0.25\linewidth]{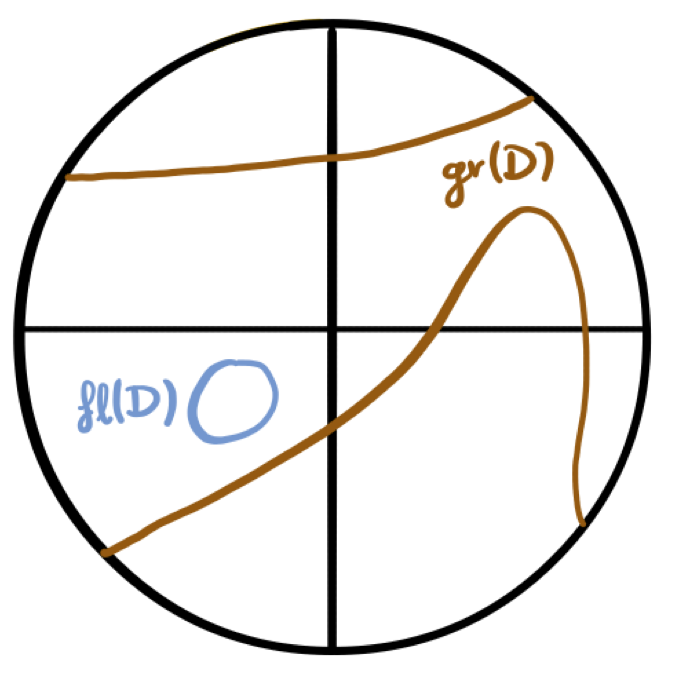}
        \caption{The ground and floating parts of a curve.}
        \label{fig:ground-floating-parts}
    \end{figure}


We first need some definitions and lemmas which will also be useful for the following.

\begin{definition}
Given a cellular decomposition $(\calV,\calE,\calF)$ of $\calS$, for any face $f\in\calF$ let $l(f)$ be the number of points in $\varphi_f^{-1}(\calV\cap\partial f)\subset\partial\bbD^2$. We temporarily set the numbers $0=t_0<t_1<\dots<t_{l(f)}=1$ such that
\[\varphi_f^{-1}(\calV\cap\partial f)\subset\partial\bbD^2=\{\partial\varphi_{\bbD^2}(t_0),\partial\varphi_{\bbD^2}(t_1),\dots,\partial\varphi_{\bbD^2}(t_{l(f)})\}.\]
Then, $\partial\varphi_f((t_{i-1},t_i))=e\in\calE$ is an edge, and since $\partial\varphi_f|_{(t_{i-1},t_i)}$ is a homeomorphism by our assumptions, the composition
\[\varphi_e^{-1}\circ\partial\varphi_{(t_{i-1},t_i)}\colon(t_{i-1},t_i)\to\bbB^1=(-1,1)\] is strictly monotone. So for any $i=1,\dots,l(f)$ we define
\[\{+,-\}\times\calE\ni \partial_i(f):=\begin{cases}
        +e&\text{if $\varphi_e^{-1}\circ\partial\varphi_{(t_{i-1},t_i)}$ is increasing,}\\
        -e&\text{if $\varphi_e^{-1}\circ\partial\varphi_{(t_{i-1},t_i)}$ is decreasing.}
\end{cases}\]
The value $\partial_i(f)$ represents what is the $i$-th edge crossed while traveling along $\partial\varphi_f$ and whether this happens with the same orientation as $\varphi_e$ or the opposite. We also define the open triangle
\[\Delta_i(f):=\varphi_f\big(\{(r\cos(2\pi t),r\sin(2\pi t))\mid r\in(0,1),t\in(t_{i-1},t_i)\}\big)\]
to be the image under $\varphi_f$ of the $i$-th ``slice'' of the disk.
\end{definition}

An important consequence of our definitions is that $\overline{\Delta_i(f)}\supset \partial_i(f)$.

\begin{figure}
    \centering
    \includegraphics[width=0.7\linewidth]{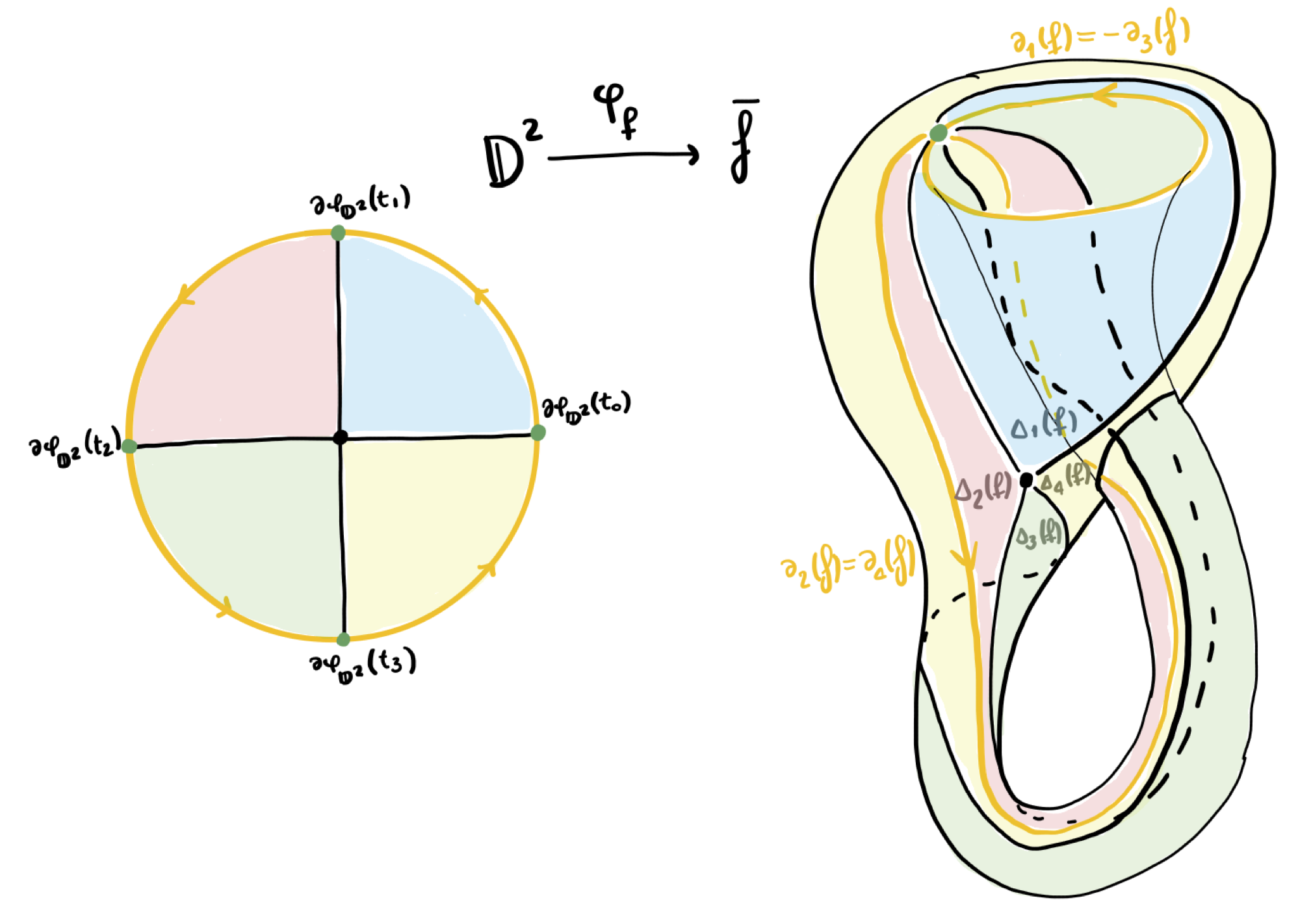}
    \caption{The values of $\partial_i(f),\Delta_i(f)$ for a face whose closure is homeomorphic to the Klein bottle.}
    \label{fig:klein}
\end{figure}

\begin{lemma}\label{lem:jordan}
    Let $\calC\subset\bbD^2$ be a connected curve such that $\calC\cap\partial\bbD^2\neq\varnothing$ (so $\calC\simeq\bbD^1$ and $\calC\cap\partial\bbD^2=\{p,q\}$ is made up of two points). Then $\calC$ splits $\bbB^2$ into two connected components $\calU_1,\calU_2$, each homeomorphic to $\bbB^2$ and whose closure is homeomorphic to $\bbD^2$.
    
    Moreover, up to homeomorphism of $\bbD^2$ fixing $\partial\bbD^2$ pointwise, the curve $\calC$ is determined by $\calC\cap\partial\bbD^2$.
\end{lemma}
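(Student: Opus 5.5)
The plan is to reduce to the Jordan--Schoenflies theorem. First, since $\calC$ is a curve on the surface $\bbD^2$, it is a compact connected $1$-manifold with boundary $\calC\cap\partial\bbD^2$. It is nonempty, so $\calC\simeq\bbD^1$ with endpoints $p\neq q$, and its interior lies in $\bbB^2$; this is the parenthetical claim.

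Next, the two endpoints split $\partial\bbD^2$ into two open arcs $\alpha_1,\alpha_2$. The sets $\Gamma_j=\calC\cup\overline{\alpha_j}$ are simple closed curves in $\bbD^2\subset\bbR^2$. By the Jordan--Schoenflies theorem, each $\Gamma_j$ bounds a closed disk $\overline{D_j}\subset\bbR^2$ whose interior $D_j$ is homeomorphic to $\bbB^2$. I then check that $D_j\subset\bbB^2$: $D_j$ is the bounded component of $\bbR^2\setminus\Gamma_j$, and $\Gamma_j\subset\bbD^2$, so $\bbR^2\setminus\bbD^2$ lies in the unbounded component. Points of $\alpha_{3-j}$ lie in the closure of the exterior of $\bbD^2$ and are not on $\Gamma_j$, so they are in the unbounded component too.

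I set $\calU_j=D_j$. To see that $\bbB^2\setminus\calC=\calU_1\sqcup\calU_2$, take a point of $\bbB^2\setminus\calC$ and join it to $\partial\bbD^2$ by a path avoiding $\calC$. Such a path exists because of local flatness: $\bbB^2\setminus\calC$ has at most two components, since each point of the interior of $\calC$ has only two local sides, and both sides reach the boundary near $p$ by the half-plane chart. The path then lands in $\alpha_1$ or $\alpha_2$ and hence lies in the corresponding $D_j$. The $\calU_j$ are disjoint because they lie on opposite local sides of $\calC$ near $p$. This gives $\overline{\calU_j}=\overline{D_j}\simeq\bbD^2$.

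For uniqueness, let $\calC'$ be another such curve with the same endpoints $p,q$, and let $\calU'_j$ be its components. I choose a homeomorphism $h\colon\calC\to\calC'$ fixing $p$ and $q$. Extending it by the identity on $\overline{\alpha_j}$ gives a homeomorphism $\partial\overline{\calU_j}\to\partial\overline{\calU'_j}$. Both closures are disks by the previous step, so the Alexander trick (coning) extends this to a homeomorphism $\overline{\calU_j}\to\overline{\calU'_j}$. The two extensions agree on $\calC$, so they glue to a homeomorphism of $\bbD^2$ that fixes $\partial\bbD^2$ pointwise and sends $\calC$ to $\calC'$.

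The main obstacle is the second step: proving that $\bbB^2\setminus\calC$ has exactly two components and identifying them with the Schoenflies disks. This is where the topological (locally flat) hypotheses are used, and I would handle it using the half-plane chart at $p$. Everything after that is standard.
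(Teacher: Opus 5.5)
Your proposal is correct and follows the paper's proof. In both, Jordan--Sch\"onflies is applied to $\calC\cup\overline{\alpha_j}$, the map $h\cup\mathrm{id}_{\overline{\alpha_j}}$ is extended over each resulting disk by radial extension (coning), and the two pieces are glued along $\calC$. You also check two things the paper takes for granted: that the Sch\"onflies disks lie in $\bbB^2$, and that they are exactly the components of $\bbB^2\setminus\calC$. Your ``opposite local sides'' justification of disjointness is loose. It becomes immediate once you note that each $D_j$ is open and connected, and its frontier $\Gamma_j$ misses $\bbB^2\setminus\calC$, so $D_j$ is a full component of $\bbB^2\setminus\calC$; then $D_1\neq D_2$ because $\Gamma_1\neq\Gamma_2$.
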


\begin{proof}
    Let $\calL_1,\calL_2$ be the two connected components of $\partial\bbD^2\setminus\{p,q\}$. Then $\calC\cup\calL_1$ and $\calC\cup\calL_2$ are two closed curves which by the Jordan-Schönflies curve theorem \cite{thomassen1992jordan} enclose two subsets resp.\ $\calU_1,\calU_2$ both homeomorphic to $\bbB^2$ and whose closure is homeomorphic to $\bbD^2$; this proves the first part of the assertion.

    For uniqueness, consider another curve $\calC'$ with the same properties as $\calC$ and such that $\calC'\cap\partial\bbD^2=\{p,q\}$. In the same way as before, the closed curves $\calC'\cup\calL_1,\calC'\cup\calL_2$ enclose resp.\ $\calU'_1,\calU'_2$. Since $\calC,\calC'$ by definition of curves on $\bbD^2$ are images of maps $\bbD^1\to\bbD^2$, it is immediate to find a homeomorphism $\calC\cup\calL_1\to\calC'\cup\calL_1$ which fixes $\calL_1$ pointwise, which can be extended to a homeomorphism $\eta_1\colon\overline{\calU_1}\to\overline{\calU'_1}$ fixing $\calL_1$ (as any homeomorphism $\partial\bbD^2\to\partial\bbD^2$ can be radially extended to a homeomorphism $\bbD^2\to\bbD^2$). In the same way we can obtain a homeomorphism $\eta_2\colon\overline{\calU_2}\to\overline{\calU'_2}$ fixing $\calL_2$ and which agrees with $\eta_1$ on $\calC$, so by gluing $\eta_1$ and $\eta_2$ we finally obtain a homeomorphism $\eta\colon\bbD^2\to\bbD^2$ fixing $\partial\bbD^2$ pointwise and such that $\eta(\calC)=\calC'$.

\end{proof}

\begin{lemma}\label{lem:curveD2}
    If $\calC$ is transversal to a cellular decomposition $(\calV,\calE,\calF)$ then for each face $f\in\calF$, $\varphi^{-1}_f(\calC)$ is a curve on $\bbD^2$.
\end{lemma}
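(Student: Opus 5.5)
To show that $\varphi_f^{-1}(\calC)$ is a curve on $\bbD^2$, I will verify closedness and then local flatness, treating interior points of $\bbD^2$ and boundary points separately.

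\emph{Closedness.} Since $\varphi_f$ is continuous and $\calC$ is closed, $\varphi_f^{-1}(\calC)$ is closed in $\bbD^2$.

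\emph{Interior points.} Let $p\in\bbB^2$ with $\varphi_f(p)\in\calC$. The restriction $\varphi_f|_{\bbB^2}\colon\bbB^2\to f$ is a homeomorphism onto the open set $f\subset\interior\calS$. Since $f$ is disjoint from $\bigcup\calE\cup\calV\supseteq\partial\calS$, the point $\varphi_f(p)$ lies in the interior of $\calS$. Local flatness of $\calC$ at $\varphi_f(p)$ then gives a neighborhood $\calU\subset f$ with $(\calU,\calC\cap\calU)\simeq(\bbR^2,\{y=0\})$. Pulling back along $\varphi_f|_{\bbB^2}$ gives the required chart for $\varphi_f^{-1}(\calC)$ at $p$.

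\emph{Boundary points.} Let $p\in\partial\bbD^2$ with $\varphi_f(p)\in\calC$. Transversality gives $\calC\cap\calV=\varnothing$, so $p\notin\varphi_f^{-1}(\calV)$. Hence $p=\partial\varphi_{\bbD^2}(t)$ for some $t\in(t_{i-1},t_i)$, and $q:=\varphi_f(p)$ lies on the edge $e$ with $\partial_i(f)=\pm e$.

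By transversality there is a neighborhood $\calW\ni q$ and a homeomorphism
\[(\calW,\bigcup\calE\cap\calW,\calC\cap\calW)\simeq(\bbR^2,\{x=0\},\{y=0\}).\]
I will shrink $\calW$ so that it meets no vertex and no edge other than $e$. In the model, the line $\{x=0\}$ corresponds to $e$ and separates $\calW$ into two half-planes, each cut by $\calC$ in a half-line.

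The key claim is that, for a small half-disk neighborhood $H$ of $p$ in $\bbD^2$, the map $\varphi_f|_H$ is a homeomorphism onto one closed half-plane $\{x\ge0\}$ or $\{x\le0\}$ of the chart. This would give
\[(H,\varphi_f^{-1}(\calC)\cap H)\simeq(\{x\ge0\},\{y=0\}),\]
which is exactly the boundary chart in the definition of a curve.

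To prove the claim, I use three facts. First, $\varphi_f$ maps a small arc of $\partial\bbD^2$ around $p$ homeomorphically onto a subarc of $e$, by the assumption on edges. Second, the slice $\Delta_i(f)$ accumulates on $e$ from one side. Third, $\varphi_f$ is a closed quotient map that is injective on the open slice. Combining these, a neighborhood of $p$ in $\bbD^2$ should map onto one side of $e$ near $q$.

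This step is the main obstacle, because the CW complex is not regular. The edge $e$ may occur twice in $\partial f$, once with each orientation (as in the Klein bottle picture), so $\varphi_f$ need not be injective near all preimages of $q$. I would handle this by working locally: $\varphi_f^{-1}(q)$ is a finite set of points on $\partial\bbD^2$, one for each occurrence of $e$ in the boundary word. Choose disjoint small half-disk neighborhoods of these points. Their images are open on the appropriate sides of $e$ near $q$ and are pairwise disjoint, since $f$ is embedded. Because $\calW\setminus e$ has exactly two sides, each half-disk maps onto a single side. Using compactness of $\bbD^2$ and closedness of $\varphi_f$, I would show that the restriction to each half-disk is a homeomorphism onto its image, namely a closed half-plane neighborhood in which $\calC$ appears as $\{y=0\}$.
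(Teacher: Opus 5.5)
Your route is the paper's. You pull back charts at interior points. At $p\in\partial\bbD^2$ you take a local chart of $(\bigcup\calE,\calC)$ at $q=\varphi_f(p)$, identify one side of $e$ near $q$ with a neighborhood of $p$ in the closed slice, on which $\varphi_f$ is injective, and finish with ``compact to Hausdorff''. The paper splits into cases according to whether $e$ occurs once or twice in $\partial_*(f)$; you instead treat all preimages of $q$ uniformly. That works, provided the word ``onto'' is justified as you indicate:
\begin{itemize}
\item a small one-sided neighborhood of $q$ is connected and misses $\bigcup\calE\cup\calV$, so it lies in a single face;
\item that face is $f$, because it meets $\varphi_f(H\cap\bbB^2)$;
\item by closedness of $\varphi_f$, its points have preimages close to $\varphi_f^{-1}(q)$, hence inside your half-disks;
\item by connectedness and disjointness of the half-disk images, they all lie inside a single half-disk.
\end{itemize}

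There is, however, a case your argument does not cover: an edge $e\subset\partial\calS$. The paper allows $\partial\calS\neq\varnothing$ and curves with arc components ending on $\partial\calS$. Transversality to $(\calV,\calE,\calF)$ is only required inside $\interior\calS\setminus\calV$. At $q\in e\subset\partial\calS$ no neighborhood is homeomorphic to $\bbR^2$. So the chart $(\calW,\bigcup\calE\cap\calW,\calC\cap\calW)\simeq(\bbR^2,\{x=0\},\{y=0\})$ you invoke does not exist, and ``$\calW\setminus e$ has exactly two sides'' fails.

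The repair is easy. Use instead the boundary chart $(\calU,\calC\cap\calU)\simeq(\{x\ge0\},\{y=0\})$ from the definition of a curve. By invariance of domain it sends $\calU\cap\partial\calS$, which near $q$ is $e$, to $\{x=0\}$. There is then a single side, and your half-disk argument goes through unchanged. This is the situation that the paper's second case (a one-sided chart $\calW\subset\overline{f}$ when $e$ occurs only once in $\partial_*(f)$) is phrased to include.
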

\begin{proof}
    To verify that $\varphi_f^{-1}(\calC)$ is a curve, we need to distinguish between points on $\interior(\bbD^2)=\bbB^2$ and on $\partial\bbD^2$. If $p\in\varphi_f^{-1}(\calC)\cap\bbB^2$ then there exists an open neighborhood $p\in\calU\subset\calS$ such that $(\calU,\calU\cap\calC)\simeq(\bbR^2,\{y=0\})$, and up to restriction we can suppose $\calU\subseteq f$.
    Then, by setting $\calU'=\varphi^{-1}_f(\calU)$ we have that $\calU'$ is an open neighborhood of $p$ and
    \[(\calU',\calU'\cap\varphi_f^{-1}(\calC))\simeq(\calU,\calU\cap\calC)\simeq(\bbR^2,\{y=0\})\]
    as desired.

    If instead $p\in\varphi_f^{-1}(\calC)\cap\partial\bbD^2$, then $\varphi_f(p)\in\partial f\setminus\calV$ and in particular $\varphi_f(p)\in e$ for some edge $e\in\calE$. Let $i$ be such that
    \[p\in \calU:=\{(r\cos(2\pi t),r\sin(2\pi t)\mid r\in(0,1],t\in(t_{i-1},t_i)\},\]
    and notice that $\partial_i(f)=\pm e$, that $\varphi_f(\calU)=\Delta_i(f)\cup e$ and that $\calU$ is an open neighborhood of $p$ in $\bbD^2$. There are now two possibilities.
    \begin{enumerate}
        \item There exists another $i'$ such that $\partial_{i'}(f)=\pm e$. Then, $\Delta_i(f)\cup\Delta_{i'}(f)\cup e$ is an open neighborhood of $p$, inside which (by the fact that $\calC\pitchfork e$) we can find an open neighborhood $\calW$ such that
        \[(\calW,\calW\cap e,\calW\cap\calC)\simeq(\bbR^2,\{y=0\},\{x=0\}).\]
        Now, through this homeomorphism, $\calW\setminus e\simeq\{y\neq0\}$ and
        \[\calW\setminus e=(\calW\cap\Delta_i(f))\sqcup(\calW\cap\Delta_{i'}(f))\]
        so without loss of generality, $\calW\cap\Delta_i(f)\simeq\{y\ge0\}$. Let $\calW'=\calW\cap(\Delta_i(f)\cup e)$. Then,
        \begin{equation}\label{eq:messy1}
            (\calW',\calW'\cap e,\calW'\cap\calC)\simeq(\{y\ge0\},\{y=0\},\{x=0\}).
        \end{equation}
        Finally, $\varphi_f$ is injective when restricted to $\calU$. Additionally,
        \[\varphi_f^{-1}(\calW)\cap\calU=\varphi_f^{-1}(\calW')\cap\calU\]
        is an open neighborhood of $p$, and by taking $\calW$ small enough we can suppose that $\overline{\varphi_f^{-1}(\calW)\cap\calU}\subset\calU$. Then the restriction
        \[\psi:=\varphi_f|_{\overline{\varphi_f^{-1}(\calW)\cap\calU}}\]
        is closed map from a compact space to a Hausdorff space, so a homeomorphism with its image, which lets us conclude that
        \begin{equation}\label{eq:messy2}
            (\psi^{-1}(\calW),\psi^{-1}(\calW)\cap\partial\bbD^2,\psi^{-1}(\calW)\cap\varphi^{-1}(\calC))\simeq(\calW',\calW'\cap e,\calW'\cap\calC).
        \end{equation}
        By composing the homeomorphisms \eqref{eq:messy1} and \eqref{eq:messy2} we get what we wanted.

        \item There is no other $i'$ such that $\partial_{i'}(f)=\pm e$. We proceed similarly to before, the only difference being that this time we take the open neighborhood $\calW\subset\overline{f}$ so that
        \[(\calW,\calW\cap e,\calW\cap\calC)\simeq(\{y\ge0\},\{y=0\},\{x=0\}).\]
    \end{enumerate}
\end{proof}

\begin{proof}[Proof of Theorem \ref{thm:gr}]
    We need to show that for all faces $f\in\calF$ and for all connected components $g$ of $f\setminus\gr(\calD)$, there exists a characteristic map $\varphi_g\colon\bbD^2\to\overline{g}$, where in particular $\varphi_g(\partial\bbD^2)$ is a union of elements of $\calV_{\gr(\calD)}$ and $\calE_{\gr}$. Call arcs the connected components of $\varphi_f^{-1}(\gr(\calD))$. We proceed by induction on the number $n$ of arcs. If $n=0$, then $f=g$ and the proposition is obvious.

    For $n>0$, consider any arc $\alpha$. By Lemma \ref{lem:jordan}, it splits $\bbB^2$ into two open subsets $\calU_1,\calU_2$ such that $(\overline{\calU_i},\calU_i)\simeq(\bbD^2,\bbB^2)$. We show that $\psi:=\varphi_f|_{\overline{\calU_1}}$ is, after the identification of its source with $\bbD^2$, a characteristic map for the face $\varphi_f(\calU_1)$ of the decomposition $(\calV',\calE',\calF')$ of $\overline{f}$ where
    \begin{itemize}
        \item $\calV'=(\calV\cap\overline{f})\cup(\varphi_f(\overline\alpha)\cap\partial f)$;
        \item $\calE'$ is the set of connected components of $(\partial f\cup\varphi_f(\alpha))\setminus\calV'$, or equivalently the element $\varphi_f(\alpha)$ together with set of connected components of $\partial f\setminus\calV'$;
        \item $\calF'$ is the set of connected components of $\overline{f}\setminus(\bigcup\calE'\cup\calV')=f\setminus\varphi_f(\alpha)$, that is,
        \[\calF'=\{\varphi_f(\calU_1),\varphi_f(\calU_2)\}.\]
    \end{itemize}
    Similarly, $\varphi_f|_{\overline{\calU_2}}$ is a characteristic map for $\varphi_f(\calU_2)$. Hence, $(\calV',\calE',\calF')$ is a cellular decomposition of $\overline{f}$. Now, there are yet $<n$ arcs to consider on separately $\varphi_f|_{\overline{\calU_1}}$ and $\varphi_f|_{\overline{\calU_2}}$, but by inductive hypothesis after adding those arcs we get a cellular decomposition.

    We now prove that $\psi$ has the required properties.
    \begin{enumerate}
        \item $\psi$ is a homeomorphism restricted to $\calU_1$: obvious from being a restriction of $\varphi_f$, and $\calU_1\subset f$.
        \item $\psi$ is surjective: since $\varphi_f$ is a closed map, we conclude $\psi(\overline{\calU_1})=\overline{\psi(\calU_1)}$.
        \item $\psi(\partial\calU_1)$ is a union of elements of $\calV'$ and $\calE'$: using notation from the proof of Lemma \ref{lem:jordan}, $\partial\calU_1=\calL_1\cup\{p,q\}\cup\alpha$. We have that $\psi(\alpha)\in\calE'$, that $\psi(p),\psi(q)\in\calV'$, and $\psi(\calL_1)$ is a union of connected components of $\partial f\setminus\calV'$ together with elements of $\calV'$, as desired.
    \end{enumerate}
    Additionally, $\psi^{-1}(\calV')$ is a finite set (in fact, a subset of $\varphi^{-1}(\calV)$ together with $\{p,q\}$), and $\psi$ sends components of $\partial\calU_1\setminus\psi^{-1}(\calV')$ homeomorphically onto edges: for the elements of $\calL_1\setminus\psi^{-1}(\calV')$ this follows from $\varphi_f$ being a characteristic map, and $\psi|_{\alpha}$ is a homeomorphism since $\alpha\subset\bbB^2$.
\end{proof}



\section{From geometry to combinatorics}\label{sec:geocom}

This section develops the central idea behind this work, namely the translation in combinatorial terms of the geometric problem of describing topological types of curves transverse to cellular decompositions. The basic idea is not new - the resulting combinatorial objects are usually called \textit{dessin d'enfant}. There are also some similarities with the constructions presented in \cite{degtyarev2012topology}. The main contributions of this work are the application of this line of reasoning to arrangements of curves, the description of multiple criteria for the realizability of such dessin, and the implementation of each of these constructions in \texttt{Julia}.

\begin{definition}\label{def:ccc}
    Consider a tuple $(V,E,F)$ where $V,E,F$ are finite sets, together with functions $\partial_0,\partial_1\colon E\to V$ and a function $\partial_*\colon F\to(\{+,-\}\times E)^{<\infty}$.

    We say that $(V,E,F)$ is a \textbf{combinatorial cell complex on $\calS$} if it can be obtained from a cellular decomposition $(\calV,\calE,\calF)$ of $\calS$ in the following way:
    \begin{itemize}
        \item as sets, $V=\calV$, $E=\calE$ and $F=\calF$;
        \item for all $e\in\calE$,
        \[\partial_0(e)=\varphi_e(-1),\quad\partial_1(e)=\varphi_e(1);\]
        \item for all $f\in\calF$,
        \[\partial_*(f)=\big(\partial_1(f),\dots,\partial_{l(f)}(f)\big).\]
    \end{itemize}

    A homeomorphism $\eta\colon\calS\to\calS$ compatible with $(\calV,\calE,\calF)$ induces functions
    \begin{itemize}
        \item $\eta_V\colon V\to V$ where $\eta_V(v)=\eta(v)$;
        \item $\eta_E\colon E\to\{+,-\}\times E$ where $\eta_E(e)=+\eta(e)$ if $\varphi_{\eta(e)}$ and $\eta\circ\varphi_e$ have the same orientation, and $\eta_E(e)=-\eta(e)$ otherwise;
        \item $\eta_F\colon F\to\{+,-\}\times F$ where $\eta_F(f)=+\eta(f)$ if $\partial\varphi_{\eta(f)}$ and $\eta\circ\partial\varphi_f$ have the same orientation, and $\eta_F(f)=-\eta(f)$ otherwise.
    \end{itemize}
    The set of functions $(\eta_V,\eta_E,\eta_F)$ obtainable in this way are the \textbf{automorphisms} of $(V,E,F)$. The homeomorphism $\eta$ fixes $(\calV,\calE,\calF)$ precisely when $\eta_V=\id_V$, $\eta_E=+\id_E$ and $\eta_F=+\id_F$.

    If $(\calV,\calE,\calF)$ is the cellular decomposition constructed from a cellular arrangement $(\calC_1,\dots,\calC_k)$, we additionally consider the \textbf{index function} $i\colon E\to\bbN$ defined as $i(e)=i$ for all $e\in\calE_i$ (recall that $\calE_i\subseteq\calE$ are the connected components of $\calC_i\setminus\calV$).

    The \textbf{$i$-automorphisms} of $(V,E,F)$ are the automorphisms of $(V,E,F)$ fixing the function $i$, that is, $i\circ\eta_E=i$. These describe the effect that a homeomorphism inducing a topological equivalence of $(\calC_1,\dots,\calC_k)$ with itself has on the combinatorial structure of its decomposition.
\end{definition}

\begin{example}\label{ex:lines_xyz2}
    The cellular decomposition $(\calV^{(3)},\calE^{(3)},\calF^{(3)})$ coming from the arrangement $(\calL_x,\calL_y,\calL_z)$ induces the combinatorial cell complex $(V^{(3)},E^{(3)},F^{(3)})$ with functions $\partial$ and $i$ defined as
    \begin{gather*}
        \partial_0(x)=\partial_0(X)=(0:1:0),\quad\partial_1(x)=\partial_1(X)=(0:0:1),\quad i(x)=i(X)=1,\\
        \partial_0(y)=\partial_0(Y)=(0:0:1),\quad\partial_1(y)=\partial_1(Y)=(1:0:0),\quad i(y)=i(Y)=2,\\
        \partial_0(z)=\partial_0(Z)=(1:0:0),\quad\partial_1(z)=\partial_1(Z)=(0:1:0);\quad i(z)=i(Z)=3,\\
        \partial_*(xyz)=(+x,+y,+z),\\
        \partial_*(xYZ)=(+x,+Y,+Z),\\
        \partial_*(XyZ)=(+X,+y,+Z),\\
        \partial_*(XYz)=(+X,+Y,+z).
    \end{gather*}
    The group of automorphisms of $(V^{(3)},E^{(3)},F^{(3)})$ has 24 elements and can be identified with the group of the rotational symmetries of the octahedron. We can name each element $\eta$ of the group after the preimage of the edges $x,y,z\in E^{(3)}$ through $\eta_E$, obtaining the classification in Figure \ref{fig:automorphisms}. Each of these automorphisms comes from a projective transformation of the plane of the form
    \[(x:y:z)\mapsto(\pm T_1:\pm T_2 :T_3)\]
    where $\{T_1,T_2,T_3\}$ is a permutation of $\{x,y,z\}$. The subgroup of $i$-automorphisms has 4 elements.

    The combinatorial cell complex $(V^{(2)},E^{(2)},F^{(2)})$ coming from the cellular decomposition induced by the arrangement of two lines $(\calL_x,\calL_z)$ has functions $\partial,i$ defined as
    \begin{gather*}
        \partial_0(xX)=\partial_1(xX)=\partial_0(Zz)=\partial_1(Zz)=(0:1:0),\quad i(xX)=1,\;i(Zz)=3;\\
        \partial_*(xXZz)=(+xX,+Zz),\quad\partial_*(xXzZ)=(+xX,-Zz).
    \end{gather*}
    The group of automorphisms is of order 8, with the subgroup of $i$-automorphisms being of order 4.
\end{example}

\begin{figure}
    \centering
    \includegraphics[width=1\linewidth]{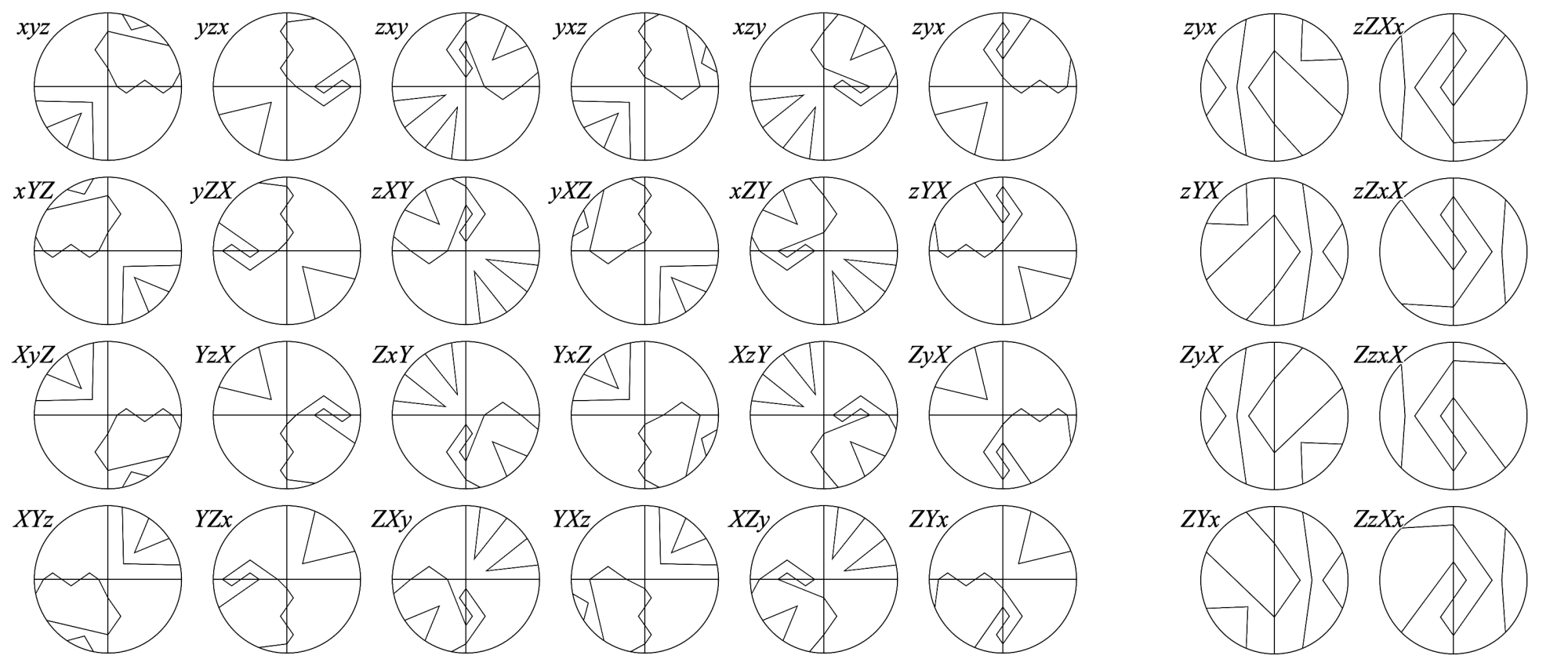}
    \caption{The group of automorphisms of $(V^{(3)},E^{(3)},F^{(3)})$ and $(V^{(2)},E^{(2)},F^{(2)})$, illustrated through their action on a combinatorial curve. In both cases the first column is the subgroup of $i$-automorphisms.}
    \label{fig:automorphisms}
\end{figure}

\begin{definition}
    Let $\calC$ be a curve on $\bbD^2$. The \textbf{Dyck word $W(\calC)$ associated to $\calC$} is the Dyck word constructed by starting with the empty word and while traveling along $\partial\varphi_{\bbD^2}$, adding a ``('' or a ``)'' to the end of the word whenever a component of $\calC$ is met resp.\ for the first or second time.
\end{definition}

\begin{definition}\label{def:combcurve}
    Let $(V,E,F)$ be a combinatorial cell complex on $\calS$. A \textbf{combinatorial curve} transverse to $(V,E,F)$ is given by the following data $(n,W,T)$:
    \begin{itemize}
        \item for every edge $e\in E$, a non-negative integer $n(e)$ such that $n(f):=\sum_{i=1}^{l(f)}n(e_i)$ is even for all faces $f\in F$;
        \item for every face $f\in F$, a Dyck word $W_f$ of length $n(f)$;
        \item for every $g\in\sum_{f\in F}[n(f)/2+1]$, a rooted tree $T_g$, called the \textbf{floating tree} on $g$.
    \end{itemize}

    From a curve $\calD$ transverse to $(\calV,\calE,\calF)$ we construct the following combinatorial curve $(n,W,T)$ transverse to $(V,E,F)$:
    \begin{itemize}
        \item for every edge $e\in\calE$, let $n(e):=\#(\calD\cap e)$ be the number of points of intersection of $e$ with the curve $\calD$;
        \item for every face $f\in\calF$, let $W_f=W(\varphi_f^{-1}(\calD))$ be the Dyck word associated to the curve $\varphi_f^{-1}(\calD)$ on $\bbD^2$;
        \item for every face $f\in\calF$, consider the ground faces $g\in\calF_{\gr(\calD)},g\subseteq f$ in the order in which each $\overline{\varphi_f^{-1}(g)}$ is first met by traveling along $\partial\varphi_{\bbD^2}$. If $g$ is the $i$-th, let $T_{(f,i)}$ be the rooted tree formed by the regions of $\calD$ contained inside $g$, where the root is the unique ground region whose closure contains $\partial g$.
    \end{itemize}  
\end{definition}

For an example see Figure \ref{fig:lines_xyz}.

\begin{remark} \label{rem:conventions}
    In examples and in the library \jlinl{NWT}, an ordering of the set $F$ of faces is necessarily chosen. This lets us index the rooted trees $T_{(f,i)}$ using a single index $g$, which represents the lexicographic ordering of the pairs $(f,i)$. Furthermore, in the library, each rooted tree $T_g$ is encoded as a pair $(g,W)$ where $W$ is the Dyck word encoded by a depth-first traversal of the rooted tree, where backtracking is recorded. For such a traversal, an ordering of siblings in each vertex of the tree is required: we use a canonical order, given by recursively sorting children. In this way, two combinatorial curves $(n,W,T),(n',W',T')$ transversal to the same combinatorial cell complex are ``mathematically'' equal if and only if, as \texttt{Julia} objects, $n=n'$, $W=W'$ and $T=T'$.
\end{remark}

\begin{remark}\label{rem:act}
    Let $\calD$ be a curve intersecting $(\calV,\calE,\calF)$ transversely, and consider the induced combinatorial curve $(n,W,T)$. Given a homeomorphism $\eta$, the curve $\eta(\calD)$ will have an induced combinatorial curve $(n',W',T')$ which depends uniquely on $(n,W,T)$ and the functions $\eta_V$, $\eta_E$ and $\eta_F$. In this way, the group of automorphisms of $(V,E,F)$ acts on the combinatorial curves transverse to $(V,E,F)$. Clearly, a homeomorphism $\eta$ fixes $(\calV,\calE,\calF)$ if and only if it induces the trivial action.
\end{remark}

\begin{lemma}\label{lem:disk}
    Up to homeomorphism fixing $\partial\bbD^2$ pointwise, a curve $\calC$ on $\bbD^2$ whose every component meets the boundary $\partial\bbD^2$ is determined by the set $\calC\cap\partial\bbD^2$ and the Dyck word $W(\calC)$.
\end{lemma}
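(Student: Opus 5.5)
We argue by induction on the number $m$ of components of $\calC$, and use Lemma \ref{lem:jordan} both to cut the disk and to straighten single arcs. Let $\calC'$ be another such curve with $\calC'\cap\partial\bbD^2=\calC\cap\partial\bbD^2=P$ and $W(\calC')=W(\calC)$. Since every component meets $\partial\bbD^2$, each component is an arc with exactly two endpoints in $P$, so $|P|=2m$. Order $P$ along $\partial\varphi_{\bbD^2}$. The Dyck word $W(\calC)$ determines a perfect matching on $P$, namely the one sending each ``('' to its corresponding ``)''. This matching is exactly the pairing of endpoints of the components of $\calC$. That the pairing is a proper bracket matching, and not merely a list of first and second visits, follows from non-crossing: two disjoint arcs in $\bbD^2$ cannot have interleaved endpoints, because by Lemma \ref{lem:jordan} one arc separates $\bbB^2$ into two components, while the other arc would meet both boundary pieces $\calL_1,\calL_2$ without crossing it. Hence $\calC$ and $\calC'$ induce the same pairing of $P$.

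If $m=0$, take the identity. For $m\geq 1$, choose the component $\alpha$ of $\calC$ whose endpoints $p,q$ form an innermost pair, i.e.\ an adjacent ``()'' in $W$. Let $\alpha'$ be the component of $\calC'$ with the same endpoints. By the uniqueness part of Lemma \ref{lem:jordan}, there is a homeomorphism $\eta_0$ of $\bbD^2$ fixing $\partial\bbD^2$ pointwise with $\eta_0(\alpha)=\alpha'$. Replacing $\calC$ by $\eta_0(\calC)$, we may assume $\alpha=\alpha'$.

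Now $\alpha$ splits $\bbB^2$ into $\calU_1,\calU_2$ with $\overline{\calU_i}\simeq\bbD^2$. Every other component of $\calC$ and of $\calC'$ lies in the closure of one of the two sides, and the side is determined by which boundary arc $\calL_1$ or $\calL_2$ contains its endpoints; this is the same side for $\calC$ and $\calC'$. Choose the labelling so that $\calL_1$ contains no point of $P$, which is possible by the innermost choice. Then $\overline{\calU_1}$ contains no other components, and we set $\eta$ equal to the identity there.

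On $\overline{\calU_2}$, fix a homeomorphism to $\bbD^2$ whose boundary is $\alpha\cup\calL_2$. The curves $\calC\cap\overline{\calU_2}$ minus $\alpha$ and $\calC'\cap\overline{\calU_2}$ minus $\alpha'$ have $m-1$ components, all meeting the boundary, in the same boundary points and with the same matching. Their Dyck words, read from a base point, agree, because that word is determined by the matching together with the cyclic order. By induction there is a homeomorphism of $\overline{\calU_2}$ fixing $\alpha\cup\calL_2$ pointwise and carrying one curve to the other. It agrees with the identity on $\alpha$, so gluing the two pieces gives the required homeomorphism of $\bbD^2$ fixing $\partial\bbD^2$ pointwise.

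The main technical point is the bookkeeping in the inductive step. We must check that the restricted curves are again curves on a disk in the sense of the definition, which holds because they are locally flat and away from $\alpha$. We must also check that the restricted Dyck words agree, which is cleanest by phrasing the inductive hypothesis in terms of the non-crossing matching on $P$ rather than the word itself.
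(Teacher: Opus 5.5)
Your proposal is correct and is essentially the paper's argument: use Lemma \ref{lem:jordan} to straighten one arc at a time, then recurse on the sub-disks that arc cuts off. The differences are cosmetic: you remove an innermost arc (a leaf of the ordered tree) first rather than traversing the tree from the root, and you spell out details the paper leaves implicit, namely that non-crossing makes the bracket matching of $W(\calC)$ coincide with the endpoint pairing of components, and how the two homeomorphisms are glued along $\alpha$.
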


\begin{proof}
    A Dyck word is the same as an ordered tree, i.e.\ a rooted tree in which an order is specified for every set of children of a vertex. In our case, each non-root vertex of this tree corresponds to a component of $\calC$, which we can see as an arc $\alpha\colon[0,1]\to\bbD^2$ where the endpoints $\alpha(0),\alpha(1)$ are determined by $W(\calC)$ and the set $\calC\cap\partial\bbD^2$, and where $\alpha(t)\in\bbB^2$ for all $0<t<1$. Then, by traversing the tree depth-first, we show that up to homeomorphism there is only one way to place each arc $\alpha$ at each step. Indeed, by Lemma \ref{lem:jordan}, this arc is unique up to homeomorphism and splits the region containing it in two regions over which the same lemma applies.
\end{proof}

\begin{lemma}\label{lem:float}
    Up to homeomorphism fixing $\partial\bbD^2$ pointwise, a curve $\calC$ on $\bbD^2$ disjoint from $\partial\bbD^2$ is determined by the rooted tree formed by the connected components of $\bbB^2\setminus\calC$, where the root is the unique component whose closure contains $\partial\bbD^2$.
\end{lemma}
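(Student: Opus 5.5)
We argue by induction on the number of components of $\calC$, i.e.\ on the number of non-root vertices of the tree. Since $\calC\subset\bbB^2$ is closed and disjoint from $\partial\bbD^2$, it is a finite disjoint union of ovals in the open disk; it is finite because it is a compact $1$-manifold. By the Jordan--Schönflies theorem \cite{thomassen1992jordan}, each oval $\gamma$ bounds a closed disk $D_\gamma\subset\bbB^2$. The tree is well defined: the vertices are the components of $\bbB^2\setminus\calC$, and two vertices are adjacent when their closures share an oval. Each oval is adjacent to exactly two regions, and the complement of each oval has two components, so this graph is a tree. The root is the unique region adjacent to $\partial\bbD^2$. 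The base case, where $\calC=\varnothing$ and the tree is a single vertex, is trivial, with the identity as homeomorphism.

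For the inductive step, take two curves $\calC,\calC'$ with isomorphic rooted trees. The children of the root correspond bijectively to the \emph{outermost} ovals $\gamma_1,\dots,\gamma_m$ of $\calC$: an oval is outermost when it is contained in no $D_\gamma$ for another oval $\gamma$. The same holds for $\gamma'_1,\dots,\gamma'_m$ of $\calC'$, and we number them so that the subtree of $\gamma_j$ is isomorphic to that of $\gamma'_j$. The key geometric step is to build a homeomorphism $\eta_0\colon\bbD^2\to\bbD^2$ that fixes $\partial\bbD^2$ pointwise and satisfies $\eta_0(D_{\gamma_j})=D_{\gamma'_j}$ for all $j$.

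\begin{itemize}
\item First, choose disjoint arcs in the root region cutting $\bbD^2\setminus\bigcup_j\interior D_{\gamma_j}$ into pieces. Concretely, pick points $p_1,\dots,p_m\in\partial\bbD^2$ and join $p_j$ to a point of $\gamma_j$ by disjoint arcs $\beta_j$, which exist since the root region is connected and open. Then add arcs between consecutive disks.
\item Next, observe that the closure of the root region minus these arcs is a closed disk whose boundary is made up of the $\gamma_j$, the $\beta_j$ and pieces of $\partial\bbD^2$. Doing the same for $\calC'$ with the same $p_j$ gives a matching boundary pattern.
\item Then, by applying Lemma \ref{lem:jordan} repeatedly (or Schönflies plus radial extension, as in its proof), construct a homeomorphism of these pieces that agrees on the common boundary.
\end{itemize}

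A cleaner alternative is available if the inductive framework allows it. Note that $\bbD^2\setminus\bigcup\interior D_{\gamma_j}$ is a disk with $m$ holes, and any two such planar surfaces are homeomorphic rel the outer boundary by a homeomorphism sending the $j$-th hole to the $j$-th hole. One justifies this by the classification of compact surfaces with boundary, or directly by the cutting argument above. The homeomorphism on the hole boundaries can be arbitrary.

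Finally, for each $j$ the curve $\calC\cap\interior D_{\gamma_j}$, viewed inside $D_{\gamma_j}\cong\bbD^2$, has as its tree the subtree rooted at the child of the root corresponding to $\gamma_j$. This subtree has fewer vertices. By the inductive hypothesis, applied after identifying $D_{\gamma_j}$ and $D_{\gamma'_j}$ with $\bbD^2$ via $\eta_0|_{D_{\gamma_j}}$, there is a homeomorphism $D_{\gamma_j}\to D_{\gamma'_j}$ that agrees with $\eta_0$ on $\gamma_j$ and carries $\calC\cap D_{\gamma_j}$ to $\calC'\cap D_{\gamma'_j}$. We glue these maps with $\eta_0$ outside the disks; the pieces are closed and the maps agree on overlaps, so the result is a homeomorphism of $\bbD^2$ fixing $\partial\bbD^2$ pointwise that sends $\calC$ to $\calC'$.

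The main obstacle I expect is the construction of $\eta_0$, that is, the uniqueness of the disk with $m$ holes rel boundary. I would handle it by induction on $m$ via cutting arcs, so that everything reduces to Lemma \ref{lem:jordan} and the Alexander trick (radial extension).
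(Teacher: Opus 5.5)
Your proof is correct, but it is organized differently from the paper's. The paper places the ovals one at a time along a depth-first traversal of the tree. It asserts that each new oval, being contractible in the open region where it is placed, is unique up to a homeomorphism of that region fixing its boundary. Because earlier siblings have already been placed, that region is in general a planar domain with several boundary components. So the real input is a rel-boundary uniqueness statement for planar surfaces, which the paper leaves as ``easy to see''.

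You instead run a strong induction whose hypothesis is the lemma itself for a disk. All children of the root are handled at once by the homeomorphism $\eta_0$ of the disk with $m$ holes rel $\partial\bbD^2$, realizing a prescribed matching of the holes. You then recurse inside each $D_{\gamma_j}$, transported by $\eta_0|_{D_{\gamma_j}}$ and a Schönflies parametrization. This isolates the single geometric ingredient, including the freedom to permute holes, which is exactly where the matching of isomorphic subtrees is used. It also reduces that ingredient to Lemma \ref{lem:jordan} and radial extension, so your version is more explicit than the paper's sketch.

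One bookkeeping remark on the construction of $\eta_0$. Cutting along the $\beta_j$ alone gives a disk only in the abstract cut-open sense: each $\beta_j$ occurs twice on its boundary, and for $m=1$ there are no connecting arcs. So Schönflies does not apply verbatim. The cleanest fix is a single chain of disjoint arcs from $\partial\bbD^2$ to $\gamma_1$, from $\gamma_1$ to $\gamma_2$, \dots, and from $\gamma_m$ back to $\partial\bbD^2$. This chain cuts the root region into two pieces, each bounded by a genuine Jordan curve. The same chain, with the same endpoints on $\partial\bbD^2$ and the matched order of holes, can be drawn for $\calC'$. Each arc except the last joins two distinct boundary components of a connected planar surface, so it does not disconnect it.
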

\begin{proof}
    As in the proof of \ref{lem:disk}, we traverse the tree depth-first and show that at each step there is only one way up to homeomorphism to place each oval. By choosing such a traversal, the oval is always contractible inside the open region it is to be placed, so it is easy to see that the placement is unique up to homeomorphism of that region fixing its boundary.
\end{proof}

\begin{theorem}\label{thm:main}
    The construction in Definition \ref{def:combcurve} defines a bijection
    \begin{equation}\label{eq:main}
        \frac{\{\calD\mid\calD\pitchfork(\calV,\calE,\calF)\}}{\text{homeomorphism that fixes $(\calV,\calE,\calF)$}}\to\{(n,W,T)\mid(n,W,T)\pitchfork(V,E,F)\}
    \end{equation}
    between curves $\calD$ transverse to $(\calV,\calE,\calF)$ up to homeomorphism that fixes $(\calV,\calE,\calF)$, and combinatorial curves transverse to $(V,E,F)$.
\end{theorem}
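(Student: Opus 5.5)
The plan is to prove three things in turn: the map in \eqref{eq:main} is well defined, it is injective, and it is surjective. Throughout, we work face by face through the characteristic maps $\varphi_f$, using Lemma~\ref{lem:curveD2} to pull $\calD$ back to a curve on $\bbD^2$.

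\textbf{Well-definedness.} Let $\eta$ fix $(\calV,\calE,\calF)$. Then $\eta$ preserves each edge together with the orientation of $\varphi_e$, so $n$ is unchanged. For each face $f$, the map $\varphi_f^{-1}\circ\eta\circ\varphi_f$ is well defined on $\bbB^2$ and extends to $\partial\bbD^2$ as an orientation-preserving self-map. It respects the slices coming from $\partial_*(f)$, because $\eta_F=+\id_F$ and $\eta_E=+\id_E$. Such a map preserves the cyclic order of boundary points starting at $\partial\varphi_{\bbD^2}(0)$, which is a vertex preimage and hence fixed. It therefore preserves the Dyck word $W_f$, the order in which ground faces are first met, and the rooted trees of regions. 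This is essentially Remark~\ref{rem:act} with trivial action.

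\textbf{Injectivity.} Suppose $\calD,\calD'$ induce the same $(n,W,T)$. First build a homeomorphism of the $1$-skeleton: on each edge $e$, choose an orientation-preserving self-homeomorphism of $e$ fixing its endpoints and sending $\calD\cap e$ to $\calD'\cap e$. This is possible since $n(e)$ agrees. Next, for each face $f$, this edge map lifts through $\varphi_f$ to a homeomorphism $h_f$ of $\partial\bbD^2$. Each component of $\partial\bbD^2\setminus\varphi_f^{-1}(\calV)$ maps homeomorphically onto an edge, so the lifts on the different slices are compatible, and $h_f$ fixes the vertex preimages. Radially extend $h_f$ and compose, so that we may assume $\varphi_f^{-1}(\calD)$ and $\varphi_f^{-1}(\calD')$ meet $\partial\bbD^2$ in the same set. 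Because $W_f=W'_f$, Lemma~\ref{lem:disk} gives a homeomorphism of $\bbD^2$ fixing the boundary and matching the ground arcs. The floating ovals are handled inside each ground piece: each piece is a closed disk by Theorem~\ref{thm:gr} and Lemma~\ref{lem:jordan}, the pieces are ordered by first appearance along $\partial\varphi_{\bbD^2}$, and the trees $T_{(f,i)}$ agree, so Lemma~\ref{lem:float} applies after identifying the piece with $\bbD^2$. All these maps fix the boundary of their disk. They therefore descend through $\varphi_f$, which is a closed quotient map, and agree on the identifications along $\partial f$. The result is a global homeomorphism $\eta$ fixing $(\calV,\calE,\calF)$ with $\eta(\calD)=\calD'$.

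\textbf{Surjectivity.} Given $(n,W,T)$, place $n(e)$ points on each edge $e$. For each face, pull the points back to $\partial\bbD^2$ via $\varphi_f$; there are exactly $n(f)$ of them, an even number. Draw disjoint chords inside $\bbB^2$ pairing the points according to the matching encoded by $W_f$. Such noncrossing chords exist, for instance as hyperbolic geodesics, because a Dyck word is a noncrossing matching. Then insert nested round ovals inside each resulting ground piece according to $T_{(f,i)}$. Finally, push everything forward by $\varphi_f$.

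The main obstacle is verifying that the result is a curve transverse to the decomposition near points of edges, since each edge may lie on the boundary of one face or of two (possibly the same face twice). I would handle this by choosing the chords to be straight radial segments near $\partial\bbD^2$ and using the two cases already distinguished in Lemma~\ref{lem:curveD2}. This yields local charts of the form $(\bbR^2,\{y=0\},\{x=0\})$ after gluing the two half-disk neighbourhoods. A point to check is that two slices glued along an edge agree on where the points sit; this holds because both are pulled back from the same points on $e$. The floating ovals lie in the interior of faces, so they cause no difficulty. Reading off the combinatorial curve of this construction returns $(n,W,T)$ by design.
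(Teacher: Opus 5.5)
Structurally your proposal is the paper's proof. Surjectivity places $n(e)$ points on each edge, draws the noncrossing arcs prescribed by $W_f$ and inserts ovals according to $T$. Injectivity first arranges $\calD\cap e=\calD'\cap e$, then applies Lemma~\ref{lem:disk} face by face and Lemma~\ref{lem:float} in each ground piece. The only variation is the alignment step. The paper uses a homeomorphism supported in a neighbourhood of each edge built from two sectors of $\Delta_i(f)$ and $\Delta_{i'}(f')$. You instead extend orientation-preserving edge homeomorphisms radially through the characteristic maps. That is correct, since the radial extension respects the fibres of $\varphi_f$ because it lifts a map of the $1$-skeleton, and it avoids the local neighbourhood construction.

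There is, however, a genuine gap in your well-definedness argument, and the paper shares it.

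\textbf{The faulty inference.} You conclude that the boundary extension $\rho$ of $\varphi_f^{-1}\circ\eta\circ\varphi_f$ fixes $\partial\varphi_{\bbD^2}(0)$ because that point is a vertex preimage. But a vertex may have several preimages on $\partial\bbD^2$. The conditions $\eta_V=\id_V$, $\eta_E=+\id_E$, $\eta_F=+\id_F$ only force $\rho$ to permute the slices by a cyclic shift that preserves the signed word $\partial_*(f)$.

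\textbf{A counterexample.} Present $\bbP^2(\bbR)$ as $\bbD^2$ with antipodal boundary points identified, with one vertex $v$, one edge $e$ and one face $f$, so $\partial_*(f)=(+e,+e)$. The rotation $z\mapsto -z$ descends to a homeomorphism $\eta$ that is the identity on $e\cup\{v\}$ and preserves $f$, so it fixes $(\calV,\calE,\calF)$. Yet it swaps the two slices. It sends a curve meeting $e$ in three points with Dyck word \texttt{()()()} to one with Dyck word \texttt{(()())}. Hence \eqref{eq:main} is not well defined for this decomposition. The paper's proof only cites Remark~\ref{rem:act}, which fails in the same example.

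\textbf{The repair.} The shift is trivial unless some $\partial_*(f)$ is a proper power, and such a power is necessarily a square $w^2$, since an edge has at most two sides. In that case every edge of $\partial f$ has $f$ on both sides. Walking around any vertex of $\partial f$ then shows it has exactly two edge-ends. This is impossible for a decomposition induced by an arrangement, whose vertices have three or four edge-ends. So you should either add this hypothesis, or strengthen ``fixes'' to require that $\rho$ preserve each slice. With either change, your argument and its use in Corollary~\ref{cor:main_eq} go through.
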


\begin{proof}
    The function is well-defined: suppose $\calD$ and $\calD'$ are equivalent through a homeomorphism $\eta$ that fixes $(\calV,\calE,\calF)$. Then, by Remark \ref{rem:act}, they have the same associated combinatorial curve.

    To prove surjectivity, choose $n_e$ points on each edge $e$. For each face $f$, draw the unique non-crossing arc system prescribed by $W_f$. Since the endpoint labels agree on both sides of every edge, these arc systems glue to a curve $\gr(\calD)$. Finally, in each ground face $g$ insert floating ovals according to the rooted tree $T_g$.

    We now focus on injectivity. Suppose $\calD,\calD'$ are two curves meeting $(\calV,\calE,\calF)$ transversely to which we associate the same data $(n,W,T)$. Then up to a homeomorphism that fixes $(\calV,\calE,\calF)$ we can assume $\calD\cap e=\calD'\cap e$ for all $e\in\calE$. To do this, fix an edge $e$. Our goal is to find an open set $\calU\subset\calS$ such that
    \begin{itemize}
        \item $\calU\cap e$ is a connected set containing both $\calD\cap e,\calD'\cap e$ and such that the following equalities hold:
        \begin{equation}\label{eq:closureU}
            \overline{\calU}\cap\calV=\varnothing,\quad\overline{\calU}\cap\bigcup\calE=\overline{\calU}\cap e;
        \end{equation}
        \item it is ``easy'' to find a homeomorphism $\overline{\calU}\simeq\overline{\calU}$ fixing the boundary $\partial\calU$ pointwise, $\calU\cap e$ setwise and that sends $\calD\cap e$ into $\calD'\cap e$.
    \end{itemize}
    This homeomorphism fixes $(\calV,\calE,\calF)$ and achieves what we want. We construct $\calU$ explicitly, distinguishing two cases:
    \begin{itemize}
        \item[$\underline{e\not\subset\partial\calS}$:] by definition of 2-manifold, there exist exactly two tuples $(f,i),(f',i')$ such that $\partial_i(f),\partial_{i'}(f')=\pm e$. Recall that
        \[\partial\varphi_f|_{(t_{i-1},t_i)}\colon(t_{i-1},t_i)\to e\]
        is a homeomorphism. Pick values $a,b\in(t_{i-1},t_i)$ such that \[(\partial\varphi_f|_{(t_{i-1},t_i)})^{-1}((\calD\cap e)\cup(\calD'\cap e))\subset(a,b).\]
        Define
        \[T=\varphi_f(\{(r\cos(2\pi t),r\sin(2\pi t))\mid r\in(0,1),t\in(a,b)\})\subset\Delta_i(f).\]
        Next, let
        \[a'=(\partial\varphi_{f'}|_{(t_{i'-1},t_{i'})})^{-1}(\partial\varphi_f|_{(t_{i-1},t_i)}(a)),\quad b'=(\partial\varphi_{f'}|_{(t_{i'-1},t_{i'})})^{-1}(\partial\varphi_f|_{(t_{i-1},t_i)}(b))\]
        and likewise define
        \[T'=\varphi_{f'}(\{(r\cos(2\pi t),r\sin(2\pi t))\mid r\in(0,1),t\in(a',b')\})\subset\Delta_{i'}(f').\]
        Finally, let
        \[\calU=T\cup T'\cup\partial\varphi_f((a,b)).\]
        Then, we have that $\calU$ is open, $\calU\cap e=\partial\varphi_f((a,b))$ is connected containing both $\calD\cap e,\calD'\cap e$, which satisfies the equalities \eqref{eq:closureU}. Moreover,
        \[(\overline{\calU},\calU,\calU\cap e)\simeq(\bbD^2,\bbB^2,0\times(-1,1))\]
        so it is easy to find a homeomorphism $\overline{\calU}\to\overline{\calU}$ with the desired properties.
        \item[$\underline{e\subset\partial\calS}$:] there exists a unique tuple $(f,i)$ with $\partial_i(f)=\pm e$. Pick $a,b$ and define $T$ as before. Let
        \[\calU=T\cup\partial\varphi_f((a,b)).\]
        Now we instead have that
        \[(\overline{\calU},\calU,\calU\cap e)\simeq(\bbD^2\cap\{y\ge0\},\bbB^2\cap\{y\ge0\},0\times[0,1)),\]
        but everything works as before.
    \end{itemize}

    Next we show that for each $f\in\calF$, there is a homeomorphism $\eta_f\colon\overline{f}\to\overline{f}$ fixing $\partial f$ such that $\eta_f(\gr(\calD)\cap\overline{f})=\gr(\calD')\cap\overline{f}$. Indeed, the curves $\varphi_f^{-1}(\gr(\calD)\cap\overline{f}), \varphi_f^{-1}(\gr(\calD')\cap\overline{f})$ on $\bbD^2$ have the same Dyck word $W_f$ associated to them in Lemma \ref{lem:disk}, so there is a homeomorphism $\eta\colon\bbD^2\to\bbD^2$ fixing $\partial\bbD^2$ sending the first curve to the second. Since $\eta$ fixes $\partial\bbD^2$ pointwise, it preserves the fibers of $\varphi_f$. Therefore $\eta$ descends to a homeomorphism $\eta_f\colon\overline{f}\to\overline{f}$ satisfying $\eta_f\circ\varphi_f=\varphi_f\circ\eta$, which has the desired properties. Since each $\eta_f$ is the identity on $\partial f$, these glue together to form a homeomorphism $\eta\colon\calS\to\calS$ fixing $(\calV,\calE,\calF)$ and such that $\eta(\gr(\calD))=\gr(\calD')$.

    We can now suppose $\gr(\calD)=\gr(\calD')$. Similarly to before, using Lemma \ref{lem:float} we conclude that for each ground face $g\in\calF_{\gr(\calD)}$, the floating part $\fl(\calD)\cap g$ is uniquely specified by the rooted tree $T_g$ up to homeomorphism of $\overline{g}$ fixing its boundary.
\end{proof}

\begin{corollary}[Theorem A]\label{cor:main_eq}
    Suppose $(\calC_1,\dots,\calC_k)$ is a cellular $k$-arrangement. Then, the construction in Definition \ref{def:combcurve} defines a bijection
    \begin{equation}\label{eq:main_eq}
        \frac{\{(\calC_1,\dots,\calC_k,\calD)\mid\calD\pitchfork(\calC_1,\dots,\calC_k)\}}{\text{topological type}}\to\frac{\{(n,W,T)\mid(n,W,T)\pitchfork(V,E,F)\}}{\text{$i$-automorphism of $(V,E,F)$}}
    \end{equation}
    between topological types of $(k+1)$-arrangements $(\calC_1,\dots,\calC_k,\calD)$, and combinatorial curves transverse to $(V,E,F)$ up to automorphisms of $(V,E,F)$ fixing the index function $i\colon E\to\bbN$.
\end{corollary}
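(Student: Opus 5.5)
The plan is to deduce the corollary from Theorem \ref{thm:main} by passing to quotients on both sides. Let $G$ be the group of homeomorphisms $\eta\colon\calS\to\calS$ with $\eta(\calC_j)=\calC_j$ for all $j\in[k]$. Every such $\eta$ preserves $\calV$ (pairwise intersections of the $\calC_j$), permutes the edges inside each $\calE_j$, and permutes the faces. Hence $\eta$ is compatible with $(\calV,\calE,\calF)$, and the induced triple $(\eta_V,\eta_E,\eta_F)$ is an $i$-automorphism. Conversely, by definition every $i$-automorphism arises from some homeomorphism compatible with $(\calV,\calE,\calF)$ preserving indices. Such a homeomorphism maps each $\calC_j=\overline{\bigcup\calE_j}$ to itself, so it lies in $G$. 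Thus we get a surjection $G\to\mathrm{Aut}_i(V,E,F)$, and its kernel is exactly the subgroup $G_0$ of homeomorphisms fixing $(\calV,\calE,\calF)$.

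First I will reformulate the left side of \eqref{eq:main_eq}. Two arrangements $(\calC_1,\dots,\calC_k,\calD)$ and $(\calC_1,\dots,\calC_k,\calD')$ with the same first $k$ curves are topologically equivalent if and only if $\calD'=\eta(\calD)$ for some $\eta\in G$. So the left side is $\{\calD\pitchfork(\calV,\calE,\calF)\}/G$. Here I must check that $\calD\pitchfork(\calC_1,\dots,\calC_k)$ agrees with $\calD$ being transverse to the cellular decomposition. This holds because the no-triple-points condition gives $\calD\cap\calV=\varnothing$, and pairwise transversality gives transversality to $\bigcup\calE$.

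Next I will use Remark \ref{rem:act}: $G$ acts on combinatorial curves through $G\to\mathrm{Aut}_i(V,E,F)$, and the map $\Phi$ of Definition \ref{def:combcurve} is equivariant, meaning the combinatorial curve of $\eta(\calD)$ is $\eta\cdot\Phi(\calD)$. Theorem \ref{thm:main} says $\Phi$ induces a bijection $\{\calD\}/G_0\to\{(n,W,T)\}$. Since $G_0$ is normal in $G$ with quotient $\mathrm{Aut}_i(V,E,F)$, taking orbits of the residual action on both sides gives a bijection $\{\calD\}/G\to\{(n,W,T)\}/\mathrm{Aut}_i$. The two directions are as follows.
\begin{itemize}
\item Well-definedness follows directly from equivariance.
\item For injectivity, suppose $\Phi(\calD')=\eta\cdot\Phi(\calD)=\Phi(\eta(\calD))$. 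Then Theorem \ref{thm:main} gives $\eta_0\in G_0$ with $\eta_0\eta(\calD)=\calD'$, and $\eta_0\eta\in G$.
\end{itemize}
Surjectivity is inherited from Theorem \ref{thm:main}.

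The main obstacle is the equivariance claim in Remark \ref{rem:act}, namely that the data $(n',W',T')$ of $\eta(\calD)$ depend only on $(n,W,T)$ and $(\eta_V,\eta_E,\eta_F)$. I would verify it componentwise.
\begin{itemize}
\item $n'(\eta(e))=n(e)$ holds trivially.
\item For the Dyck words, $\varphi_{\eta(f)}^{-1}\circ\eta\circ\varphi_f$ restricted to $\partial\bbD^2$ is a homeomorphism of the circle. It maps the vertex preimages to vertex preimages, preserves or reverses orientation according to the sign of $\eta_F(f)$, and possibly shifts the starting vertex. So $W'_{\eta(f)}$ is obtained from $W_f$ by a cyclic rotation and/or reversal determined by $\partial_*$ and $\eta_F$, followed by the canonical re-reading of the arc system as a Dyck word.
\item The floating trees are permuted according to how the ground faces are reindexed, and the trees themselves are unchanged.
\end{itemize}
I expect this bookkeeping, rather than any topology, to be the only delicate point.
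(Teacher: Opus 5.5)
Your proposal is correct and follows the paper's route: the paper's proof just says that the bijection of Theorem \ref{thm:main} descends to the quotients, and you make explicit what it leaves as ``clear'' (the surjection from homeomorphisms preserving each $\calC_j$ onto the $i$-automorphisms, with kernel the homeomorphisms fixing $(\calV,\calE,\calF)$, the equivariance of Remark \ref{rem:act}, and the lift $\eta_0\eta$ for injectivity). The one thing to phrase more carefully is that $\varphi_{\eta(f)}^{-1}\circ\eta\circ\varphi_f$ is a priori only defined on $\bbB^2$, because characteristic maps need not be injective on $\partial\bbD^2$. The circle homeomorphism you use should therefore be the continuous extension of this interior map, built sector by sector near the boundary as in the proof of Lemma \ref{lem:curveD2}.
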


\begin{proof}
    By Theorem \ref{thm:main}, the correspondence \eqref{eq:main} is bijective. By taking quotients, it is clear that also the correspondence \eqref{eq:main_eq} is both well-defined and bijective.
\end{proof}

\begin{example}\label{ex:unlabeled}
    In the case $\calS=\bbP^2(\bbR)$, consider arrangements of three lines and a curve $(\calL_1,\calL_2,\calL_3,\calC)$. Up to homeomorphisms, there is only one arrangement of three (pseudo)lines $(\calL_1,\calL_2,\calL_3)$ which all meet each other exactly once (e.g.\ the one described in Example \ref{ex:lines_xyz1}), and this arrangement is cellular, so
    \begin{itemize}
        \item the set of combinatorial curves $(n,W,T)$ \textit{up to $i$-automorphism} is in bijection with the topological types of arrangements $(\calL_1,\calL_2,\calL_3,\calC)$;
        \item the set of combinatorial curves $(n,W,T)$ \textit{up to automorphism} (not necessarily fixing the index function) is in bijection with the topological types of arrangements of three \textbf{unlabeled} lines and a curve, where we also allow the three lines to be permuted by the ambient homeomorphism.
    \end{itemize}
\end{example}

\begin{remark}\label{rem:ambient}
    We say that two arrangements $(\calC_1,\dots,\calC_k)$, $(\calC'_1,\dots,\calC'_k)$ are \textbf{ambient isotopic} if there exists a map $\Phi\colon[0,1]\times\calS\to\calS$ such that $\Phi_t=\Phi(t,\cdot)\colon\calS\to\calS$ is a homeomorphism for all $t\in[0,1]$, $\Phi_0=\id_{\calS}$ and $\Phi_1(\calC_i)=\calC'_i$ for all $i\in[k]$. In general, being ambient isotopic is a stronger notion than being topologically equivalent.

    The appropriate modification of the bijection \eqref{eq:main_eq} to the ambient isotopy relation is
    \begin{equation}
        \frac{\{(\calC_1,\dots,\calC_k,\calD)\mid\calD\pitchfork(\calC_1,\dots,\calC_k)\}}{\text{ambient isotopy}}\to\frac{\{(n,W,T)\mid(n,W,T)\pitchfork(V,E,F)\}}{\substack{\text{$i$-automorphism of $(V,E,F)$}\\\text{isotopic to the identity}}}
    \end{equation}
    where the $i$-automorphisms of $(V,E,F)$ isotopic to the identity form a subgroup of the $i$-automorphisms of $(V,E,F)$.

    In the case of $\calS=\bbP^2(\bbR)$, which is our main concern in the following, homeomorphisms and ambient isotopies coincide since $\bbP^2(\bbR)$ has trivial mapping class group \cite{beguin2020fixed}[Proposition 8.8]. Thus, our classification of topological types of arrangements of curves on $\bbP^2(\bbR)$ is the same as the classification up to ambient isotopy.
\end{remark}

\section{Basic combinatorial operations}\label{sec:ops}

This section contains detailed algorithms regarding geometric operations which can be carried out purely combinatorially. While the algorithms themselves may be technical, the examples should make it clear what the purpose of each operation is.

\subsection{Refinement of a combinatorial cell complex}\label{subsec:refinement}

Let $(n,W,T)$ be a combinatorial curve transverse to a combinatorial cell complex $(V,E,F)$. Let $\calD,(\calV,\calE,\calF)$ be a curve and a cellular decomposition inducing them. in §\ref{subsec:adding} we constructed the cellular decomposition $(\calV_{\gr(\calD)},\calE_{\gr(\calD)},\calF_{\gr(\calD)})$: it turns out that the induced combinatorial cell complex $(V_{\gr},E_{\gr},F_{\gr})$ depends only on $(V,E,F)$ and $(n,W,T)$. We call $(V_{\gr},E_{\gr},F_{\gr})$ the \textbf{$(n,W,T)$-refinement of $(V,E,F)$}.
We now describe a procedure to calculate $(V_{\gr},E_{\gr},F_{\gr})$.

In the following, $V_{\gr},E_{\gr},F_{\gr}$ are interpreted as lists which get progressively filled with ``new'' elements, as opposed to the ``old'' elements in $V,E,F$. We start with Algorithm \ref{alg:addE} by determining $V_{\gr}$ and inserting into $E_{\gr}$ the edges of $\calE_{\gr(\calD)}$ contained in some edge of $\calE$. We also define a function $\gr\colon E\to E_{\gr}^{<\infty}$ associating to an old edge $e\in E$ the tuple of new edges contained in it, which by definition have the same orientation.

\begin{algorithm}
\caption{Filling $V_{\gr},E_{\gr}$ and the values of $\partial_0,\partial_1\colon E_{\gr}\to V_{\gr}$, $\gr\colon E\to E_{\gr}^{<\infty}$}\label{alg:addE}
\begin{algorithmic}
\State $V_{\gr} \gets V$
\State $E_{\gr} \gets []$
\State $F_{\gr} \gets []$
\For{$e\in E$}
\State insert new elements $v_1',\dots,v_{n(e)}'$ into $V_{\gr}$
\State insert new elements $e_0',\dots,e_{n(e)}'$ into $E_{\gr}$
\State $\partial_0(e_0') \gets \partial_0(e)$
\For{$i$ from 1 to $n(e)$}
\State $\partial_0(e'_i) \gets v'_i$
\State $\partial_1(e'_{i-1}) \gets v'_i$
\EndFor
\State $\partial_1(e'_{n(e)}) \gets \partial_1(e)$
\State $\gr(e) \gets (e_0',\dots,e_{n(e)}')$
\EndFor
\end{algorithmic}
\end{algorithm}

Fix a face $f\in F$; what follows is to be repeated for every face. If $n(f)=0$ then we just add a new element $f$ into $F_{\gr}$, corresponding to the old face. So we now assume $n(f)>0$. The tuple $\partial_*(f)=(s_1e_1,\dots,s_le_l)$ describes the path $\partial\varphi_f$ in terms of edges of $E$. If we look for a similar description using instead the elements of $E_{\gr}$, we get the tuple $\partial^{\gr}_*(f)=s_1\gr(e_1)^\frown\dots^\frown s_l\gr(e_l)$
obtained as the concatenation of tuples $s_i\gr(e_i)$, where
\begin{align*}
    +\gr(e)&=(+\gr(e)_1,\dots,+\gr(e)_{n(e)+1}),\\
    -\gr(e)&=(-\gr(e)_{n(e)+1},\dots,-\gr(e)_1).
\end{align*}
Let $\partial^{\gr}_*(f)=(s'_1e'_1,\dots,s'_{L}e'_{L})$. Of these $L=l(f)+n(f)$ indices, exactly $n(f)$ of them $i_1<\dots<i_{n_f}$ have the property that $\partial_0(s'_ie'_i)\in V'\setminus V$ is a new vertex. Let $B_j=\partial^{\gr}_*(f)[i_{j-1}\le i<i_j]$ for $j=1,\dots,n_f$ (wrapping around $\partial^{\gr}_*(f)$ for $j=1$). $B_j$ is a description in terms of edges in $E_{\gr}$ of the section of path $\partial\varphi_f$ located just between the $(j-1)$-th and the $j$-th new vertex met.

We can now insert into $E_{\gr}$ also the $n(f)/2$ edges inside $f$ coming from the curve $\calD$. Suppose that in the Dyck word $W_f$, two matching parentheses have positions $j_0<j_1\in[n(f)]$. If $e'$ is the corresponding edge then we can set $\partial_0(e')=\partial_0(s'_{i_{j_0}}e'_{i_{j_0}})$ and $\partial_1(e')=\partial_0(s'_{i_{j_1}}e'_{i_{j_1}})$. Moreover we define the list $A\in E_{\gr}^{n(f)}$ by setting $A[j_0]=A[j_1]=e'$.

Algorithm \ref{alg:addF} produces the new faces in $F_{\gr}$ inside $f$. The notation $^\pm$ stands for ``(''$^\pm=+$ and ``)''$^\pm=-$.

\begin{algorithm}
\caption{Filling $F_{\gr}$ and the values of $\partial_*\colon F_{\gr}\to(\{+,-\}\times E_{\gr})^{<\infty}$}\label{alg:addF}
\begin{algorithmic}
\State $\text{\textit{unvisited}} \gets [n(f)]$
\While{\textit{unvisited} is not empty}
    \State insert a new element $f$ into $F_{\gr}$
    \State $\partial_*(f) \gets []$
    \State $\text{\textit{depth}} \gets 0$
    \State $\text{\textit{position}} \gets \text{first element of \textit{unvisited}}$
    \While{$\text{\textit{position}} \le n(f)$}
        \If{$\text{\textit{depth}} = 0$}
            \State remove \textit{position} from \textit{unvisited}
            \State insert each element of $B[\text{\textit{position}}]$ into $\partial_*(f)$
            \State insert $W_f[\text{\textit{position}}]^\pm A[\text{\textit{position}}]$ into $\partial_*(f)$
            \If{$W_f[\text{\textit{position}}]=\text{``)''}$}
                \State \textbf{break} \Comment{matching parenthesis found}
            \EndIf
        \EndIf
        \State $\text{\textit{depth}} \gets \text{\textit{depth}} + W_f[\text{\textit{position}}]^\pm1$
        \State $\text{\textit{position}} \gets \text{\textit{position}} + 1$
    \EndWhile
\EndWhile
\end{algorithmic}
\end{algorithm}

\begin{example}
    Figure \ref{fig:insert} illustrates an example of refinement of a face homeomorphic to a Klein bottle. Using the notation of the above algorithm, we have
    \begin{gather*}
        \partial_*(f)=(+e_1,+e_2,+e_1,-e_2),\\
        \gr(e_1)=(e_1',e_2'),\qquad\gr(e_2)=(e_3',e_4',e_5'),\\
        \partial_*^{\gr}(f)=(+e_1',+e_2',+e_3',+e_4',+e_5',+e_1',+e_2',-e_5',-e_4',-e_3'),\\
        B_1=(-e_3',+e_1'),\quad B_2=(+e_2',+e_3'),\quad B_3=(+e_4'),\\
        \quad B_4=(+e_5',+e_1'),\quad B_5=(+e_2',-e_5'),\quad B_6=(-e_4'),\\
        W_f="()()()",\quad A=(e_6',e_6',e_7',e_7',e_8',e_8'),\\
        \partial_*(f_1')=(-e_3',+e_1',+e_6',+e_4',+e_7',+e_2',-e_5',+e_8'),\\
        \partial_*(f_2')=(+e_2',+e_3',-e_6'),\quad\partial_*(f_3')=(+e_5',+e_1',-e_7'),\quad\partial_*(f_4')=(-e_4',-e_8').
    \end{gather*}
    \begin{figure}
        \centering
        \includegraphics[width=0.8\linewidth]{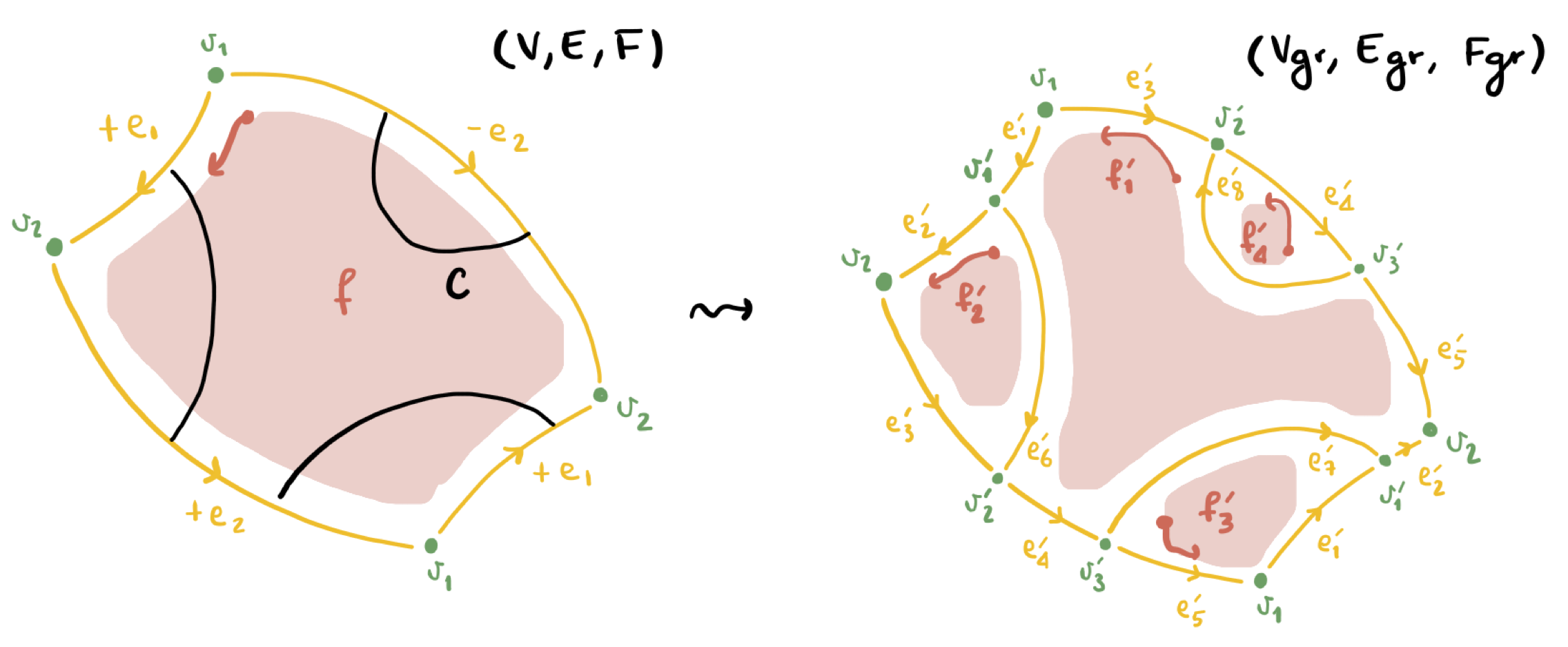}
        \caption{An example of refinement.}
        \label{fig:insert}
    \end{figure}
\end{example}

\subsection{Graphs of a combinatorial curve}\label{subsec:graphs}

Given a combinatorial curve $(n,W,T)$ transverse to $(V,E,F)$, let $(V_{\gr},E_{\gr},F_{\gr})$ be the $(n,W,T)$-refinement of $(V,E,F)$. Suppose that $E$ is also equipped with an index function $i\colon E\to\bbN$; this can be extended to $i\colon E_{\gr}\to\bbN$ by setting $i(e)$ to a value $i'\notin i(E)$ for all edges $e$ coming from the added curve. We define the following graphs (possibly with loops):
\begin{itemize}
    \item the \textbf{ground graph} $\gr(G)$ is a collection of graphs whose vertices are the ground faces $F_{\gr}$, and such that two vertices are joined by an edge in $\gr(G)_i$ if they share an edge of index $i$ in their boundary;
    \item the \textbf{floating forest} $\fl(G)$ is the graph whose vertices are the regions of $\calD\pitchfork(\calV,\calE,\calF)$, and whose edges are given by the union of all floating trees $T_g$;
    \item the \textbf{region graph} $G$ is a collection of graphs whose vertices are the regions of $\calD\pitchfork(\calV,\calE,\calF)$. Two vertices corresponding to ground regions are joined by an edge in $G_i$ if the two ground faces containing them are joined by an edge in $\gr(G)_i$. Additionally, any edge in $\fl(G)$ is also an edge in $G_{i'}$, the region graph whose edges correspond to the added curve.
\end{itemize}

Since any ground face contains a unique region face, we can identify the set of ground faces with the set of ground regions, and regard the ground faces as a subset of all the regions, thus seeing the ground graph $\gr(G)_i$ as a subgraph of the region graph $G_i$ for all $i$.

\begin{figure}
    \centering
    \includegraphics[width=0.7\linewidth]{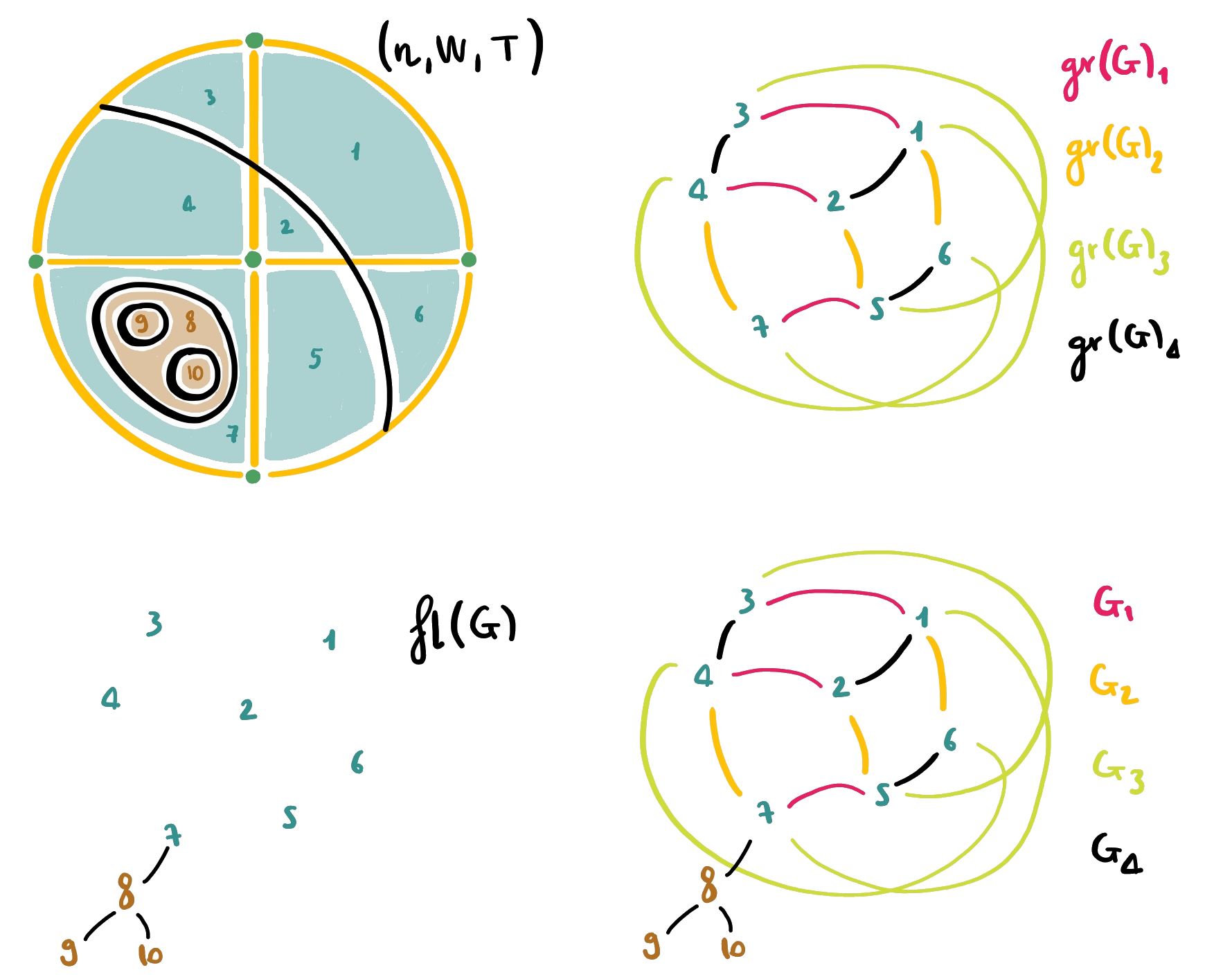}
    \caption{An example of ground, floating and region graphs of a combinatorial curve.}
    \label{fig:graphs}
\end{figure}

\subsection{Removal of edges}\label{subsec:removal}

Let $(\calV,\calE,\calF)$ be a cellular decomposition with an index function $i\colon\calE\to\{0,1\}$. Let $\calD\pitchfork(\calV,\calE,\calF)$ and extend $i\colon\calE_{\gr}\to\{0,1,2\}$ by setting $i(e)=2$ for every $e\subset\calD$. Suppose we are interested in removing from $(\calV,\calE,\calF)$ every edge of index 0: let $\calE'$ be the remaining ones. Set $\calV':=\calV\cap\overline{\bigcup\calE'}$, and $\calF'$ as the connected components of $\calS\setminus\overline{\bigcup\calE'}$. The tuple $(\calV',\calE',\calF')$ might or might not still be cellular: for the following, suppose they are, and that we have chosen characteristic maps.
Then, since $\calD$ is transverse to $(\calV,\calE,\calF)$, obviously it is also transverse to $(\calV',\calE',\calF')$. If $\calD$ induces the combinatorial curve $(n,W,T)\pitchfork(V,E,F)$ and $(n',W',T')\pitchfork(V',E',F')$, we show that $(n',W',T')$ can be calculated without reference to the underlying geometrical objects.

Before describing this procedure, consider the families of triangles $\{\Delta_i(f)\}_{(f,i)}$ and $\{\Delta'_{i'}(f')\}_{(f',i')}$ coming resp.\ from $(\calV,\calE,\calF)$ and $(\calV',\calE',\calF')$ as defined as in the beginning of §\ref{sec:geocom}, and recall that $\overline{\Delta_i(f)}\supset \partial_i(f)$ for all $(f,i)\in\sum_{f\in F}[l(f)]$. Define
\[g\colon\sum_{f\in F}[l(f)]\to F_{\gr}^{<\infty},\qquad g'\colon\sum_{f'\in F'}[l(f')]\to (F'_{\gr})^{<\infty}\]
as follows: for any $e\in E$, let $\gr(e)\in E_{\gr}^{<\infty}$ as in §\ref{subsec:refinement}. If $\partial_i(f)=se$ and $s\gr(e)=(se'_1,\dots,se'_{n(e)+1})$ (remember that $s=-$ reverses the tuple), we let $g(f,i):=(g_1,\dots,g_{n(e)+1})$ where $g_j\in F_{\gr}$ is the unique ground face such that $\overline{\Delta_i(f)\cap g_j}\supset e'_j$. Alternatively, $g(f,i)$ is the sequence of ground faces $g$ whose preimage $\overline{\varphi_f^{-1}(g)}$ is met while traveling along the $i$-th section of the border $\partial\varphi_{\bbD^2}$. Define $g'$ in a similar way.

Define
\[\psi\colon\sum_{f'\in F'}[l(f')]\to\{+,-\}\times\sum_{f\in F}[l(f)]\]
as follows: given any $(f',i')\in\sum_{f'\in F'}[l(f')]$, let $(f,i)\in\sum_{f\in F}[l(f)]$ be the unique pair such that $\overline{\Delta_{i'}(f')\cap\Delta_i(f)}\supset\partial_{i'}(f')$. Then $\partial_{i'}(f')$ and $\partial_i(f)$ refer to the same edge, but might have equal or opposite signs: we let $\psi(f',i')=+(f,i)$ if $\partial_{i'}(f')=\partial_i(f)$ and $\psi(f',i')=-(f,i)$ if $\partial_{i'}(f')=-\partial_i(f)$.


Notice that the data $(n,W,T)$ is essentially equivalent to $(n,g,T)$: in particular we show how to obtain $W_f$ from the tuple $g(f,\cdot)$ for any $f\in F$. To do this, start from the empty word and consider one by one each pair of consecutive entries $(g_i,g_{i+1})$ in the tuple $g(f,1)^\frown\dots^\frown g(f,l(f))$: if $g_i=g_{i+1}$, ignore the pair. If $g_i\neq g_{i+1}$ and the pair $(g_{i+1},g_{i})$ hasn't yet occurred, insert a ``('' into the word, otherwise insert ``)''.

Unlike $g$, the function $\psi$ cannot be derived from the data $(V,E,F)$, $(V',E',F')$, $(n,W,T)$ alone and has to be explicitly provided in the following procedure to resolve any ambiguities.

We begin our construction by considering the quotient $\pi\colon F_{\gr}\to\pi(F_{\gr})$ obtained by contracting the edges of the ground graph $\gr(G)_0$. Then
\[g'(f',i')=\pi(g(\psi(f',i')))\]
where $g(-(f,i))$ is the reverse of the tuple $g(+(f,i))=g(f,i)$. From this we immediately recover the Dyck word $W'_{f'}$ for every $f'\in F'$. Furthermore, $F_{\gr}'$ can be identified as the set of all elements $\pi(F_{\gr})$ appearing as the component of some $g'(f',i')$, while any element of $\pi(F_{\gr})\setminus F'_{\gr}$ represents a class of ground regions that after the removal of the edges have become a floating region.

To finally obtain the floating forest of $\fl(G')$ of $(n',W',T')$, first consider the region graph $G_2$ whose edges represents edges of the curve $\calD$, and let $\pi(G_2)$ be the contraction of those vertices which are linked by an edge in $G_0$ similarly to before. Define $H$ as the subgraph of $\pi(G_2)$ with the same set of vertices, and where every edge between two elements of $F'_{\gr}$ is removed. Then
\[\fl(G')=\pi(\fl(G))\cup H,\]
from which we recover $(n',W',T')\pitchfork(V',E',F')$.

Notice that after the removal, some vertices in $\calV'$ might be in the boundary of just two edges of the same type; these vertices can be removed and the pairs of edges can be merged, thus obtaining yet another cellular decomposition $(\calV'',\calE'',\calF')$ where the set of faces remains unchanged. It is easy to recover the corresponding combinatorial curve $(n'',W',T')\pitchfork(V'',E'',F')$, the only difference being that if edges $e'_1,\dots,e'_l\in\calV'$ merge into $e''\in\calV''$ then $n_{e''}=\sum_{i=1}^ln(e'_i)$.

\begin{example}
    \begin{figure}
        \centering
        \includegraphics[width=0.7\linewidth]{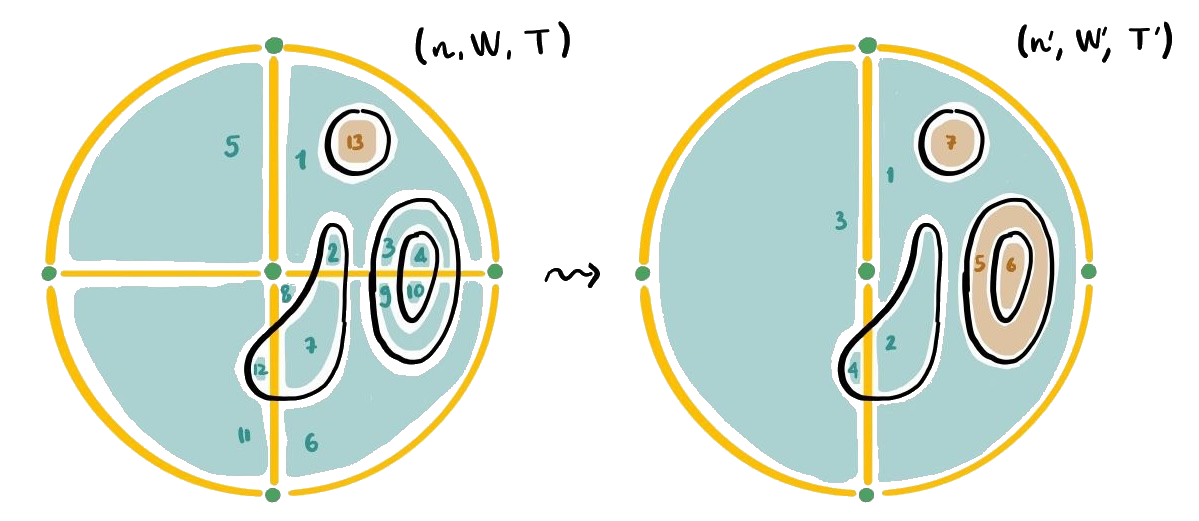}
        \caption{An example of removal of edges.}
        \label{fig:remove}
    \end{figure}

    In Figure \ref{fig:remove} we have an example of removal of the $\{y=0\}$-axis. In this case we have
    \begin{gather*}
        g(1,1)=(1),\quad g(1,2)=(1,2,1,3,4,3,1),\quad g(1,3)=(1),\\
        g(2,1)=g(2,2)=g(2,3)=(5),\\
        g(3,1)=(6,7,8),\quad g(3,2)=(8,7,6,9,10,9,6),\quad g(3,3)=(6),\\
        g(4,1)=(11,12,11),\quad g(4,2)=g(4,3)=(11),\\
        g'(1,1)=(1,2,1),\quad g'(1,2)=(1),\quad g'(2,1)=(3,4,3),\quad g'(2,2)=(3),\\
        \psi(1,1)=+(1,1),\;\psi(1,2)=-(3,1)\;\psi(1,3)=-(3,3),\;\psi(1,4)=+(1,3),\\
        \psi(2,1)=+(2,1),\;\psi(2,2)=-(4,1),\;\psi(2,3)=-(4,3),\;\psi(2,4)=+(2,3).
    \end{gather*}
\end{example}

\section{Obstructions to realizability}\label{sec:obstructions}

\subsection{The curve type of an arrangement} \label{subsec:curvetype}

It is well-known \cite{rokhlin1978complex} that the topological type of a curve in $\bbP^2(\bbR)$ is determined by two pieces of information:
\begin{itemize}
    \item whether the curve has a \textbf{pseudoline} (that is, a connected component which does not disconnect $\bbP^2(\bbR)$) or not;
    \item the rooted tree formed by the regions determined by the curve, where the root is the unique region whose closure is non-orientable, which we call the \textbf{external region}.
\end{itemize}
Let $\calD$ be a curve of degree $d$ transverse to a cellular arrangement $(\calC_1=\calL,\calC_2,\dots,\calC_k)$ where $\calL$ is a line. Let $i\colon\calE_{\gr}\to[k+1]$ be the index function where $i(e)=i$ for every $e\subset\calC_i$ and $i(e)=k+1$ for $e\subset\calD$. We want to determine the topological type of $\calD$, forgetting the cellular arrangement, from just the combinatorial data $(n,W,T)$. This is called the \textbf{curve type} of $(n,W,T)$.

First suppose $\calD$ has a pseudoline. Take the region graph $G_{k+1}$ whose edges represent regions separated by $\calD$, and merge any two regions separated by any other curve $\calC_i$ (that is, any two vertices joined by any edge in $G_i$ for $i=1,\dots,k$) obtaining a graph $G'_{k+1}$. Then, the vertices of $G'_{k+1}$ correspond to the regions determined by $\calD$, and the presence of the pseudoline ensures that the external region has a self-loop. Thus in this case we have determined $\calD$ up to topological equivalence.

Suppose now on the contrary that $\calD$ has no pseudoline. Consider the region graphs $G_1,G_{k+1}$ and merge any two regions separated by a curve $\calC_i$ for just $i=2,\dots,k$, obtaining two graph $G'_1,G'_{k+1}$. Since $\calD$ has no pseudoline, it is easy to see that the arrangement $(\calL,\calD)$ admits an external region, separated from itself by the line $\calL$, which determines a unique vertex $v'_{\mathrm{ext}}\in G'_1$ with a self-loop. Now starting from $G'_{k+1}$ merge any two vertices separated by an edge in $G'_1$, obtaining the quotient graph $\pi\colon G'_{k+1}\to G''_{k+1}$ whose points represent the regions determined by $\calD$. Then, the external region is the image $\pi(v_{\textrm{ext}})$ of the external region of $(\calL,\calD)$ under the quotient formed by forgetting the line $\calL$.

Of course, a priori we do not know whether $\calD$ has a pseudoline or not, so we first check whether $(\calL,\calD)$ has an external region, i.e.\ whether $G_1'$ has a self-loop or not, and then follow the first or the second procedure accordingly.

\subsection{Bézout's criterion of order 0}\label{subsec:bezout0}

The real version of Bézout's theorem states that two nonsingular algebraic curves $\calC,\calC'$ on $\bbP^2(\bbR)$ of degree resp.\ $d,d'$ transverse with each other intersect in $\#(\calC\cap\calC')\le dd'$ points, and
\[dd'\equiv\#(\calC\cap\calC')\pmod{2}.\]
Suppose we are given a $(1,d_2,\dots,d_k)$-arrangement $(\calC_1=\calL,\dots,\calC_k)$ inducing the combinatorial cell structure $(V,E,F)$, where $\calL$ is a line. The Bézout's theorem gives us multiple criteria for the $d_{k+1}$-realizability of a combinatorial curve $(n,W,T)\pitchfork(V,E,F)$.

In the following, let $d:=(d_1,\dots,d_{k+1})$ and if $(n,W,T)$ is $d_{k+1}$-realizable let $\calC_{k+1}$ be one realization.

\begin{definition}
    We say that $(n,W,T)$ \textbf{respects basic $d$-Bézout} if for any $i=1,\dots,k$ the sum $\sum_{i(e)=i}n_e$ is a number $\le d_id_{k+1}$ with the same parity as $d_id_{k+1}$.
\end{definition}

Respecting basic Bézout is a necessary criterion for the $d$-realizability of $(n,W,T)$ which puts a bound on the numbers $n_e$. These in turn determine the length of the Dyck words $W_f$. A simple way to bound the size of the floating trees $T_g$ is Harnack's inequality, which states that a curve of degree $d$ can have at most $(d-1)(d-2)/2+1$ ovals; using the known results on Hilbert's 16th problem for curves of degree $1\le d\le 8$ together with our algorithm to determine the topological type of a curve, we get a stronger criterion. As a consequence, fixed $(\calC_1,\dots,\calC_k)$, we can effectively list all possible candidate $d$-realizable combinatorial curves $(n,W,T)\pitchfork(V,E,F)$, and in particular the set of $d$-realizable combinatorial curves transverse to $(V,E,F)$ is finite.

We can find stronger consequences of Bézout's theorem. Let $G$ be the region graph of $(n,W,T)\pitchfork(V,E,F)$. A \textbf{combinatorial curve} $l$ with intersection numbers $(l_1,\dots,l_{k+1})$ is a closed walk on $G$ passing through exactly $l_i$ edges of $G_i$ for all $i=1,\dots,k+1$, such that for all $i\in[k+1]$, $l_i\le d_i$ and $l_i$ has the same parity as $d_i$. Since $d_1=1$, necessarily $l_1=1$: this guarantees that, if realized by an actual connected curve, $l$ would be a pseudoline.

\begin{definition}
    A combinatorial curve $(n,W,T)$ \textbf{respects $d$-Bézout at order 0} if for any two regions $r,r'\in G$ there exists a combinatorial line passing through them.
\end{definition}

\begin{figure}
    \centering
    \includegraphics[width=0.25\linewidth]{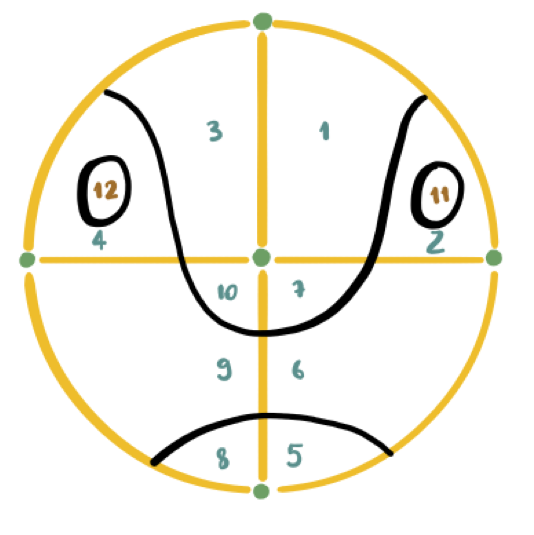}
    \caption{A curve satisfying basic Bézout but not Bézout at order 0 for $d=(1,1,1,4)$. Note that there is a closed walk through regions 11 and 12 crossing the curve 4 times, but no such walk which also crosses each axis exactly once.}
    \label{fig:placeholder}
\end{figure}

Respecting $d$-Bézout at order 0 is a necessary condition for $d$-realizability, since for any two regions determined by $\calD\pitchfork(\calC_1,\dots,\calC_k)$ we can obviously find an actual line $\calL$ transverse to $(\calC_1,\dots,\calC_k,\calD)$ going through those regions, which translates to the existence of a combinatorial line satisfying the above requirements on its intersection numbers, by Bézout's theorem.

We now discuss an effective algorithm to check whether a combinatorial curve respects Bézout at order 0 or not. Similar techniques are standard in computer science and graph theory \cite{maclagan2015introduction, gondran2008graphs}. Let $G$ be the region graph and identify the set of its vertices with $[\rho]$. Letting
\[P:=\{0,\dots,d_1\}\times\cdots\times\{0,\dots,d_{k+1}\},\]
we can give its powerset $\calP:=2^P$ the structure of a semiring by letting
\begin{align*}
    0_{\calP}&:=\varnothing,\\
    1_{\calP}&:=\{(0,\dots,0)\},\\
    s+_{\calP}s'&:=s\cup s',\\
    s\cdot_{\calP}s'&=\{(p_1+p'_1,\dots,p_{k+1}+p'_{k+1})\mid p\in s,p'\in s',p_i+p'_i\le d_i\;\forall i\in[k+1]\}.
\end{align*}
Construct the symmetric matrix $A\in\calP^{\rho\times\rho}$ where for all $r,r'\in[\rho]$,
\[A_{r,r'}=\{e_i\mid\text{$(r,r')$ is an edge of $G_i$}\}\]
where $e_i$ are the unit vectors in $\bbZ^{k+1}$. Let $A^i=A\cdot_{\calP}\ldots\cdot_{\calP}A$ be the matrix multiplication using our semiring structure. If $\delta:=d_1+\dots+d_{k+1}$ then
\[B:=\sum_{i=0}^\infty A^i=I_{\calP}+A+A^2+\dots+A^\delta\in\calP^{\rho\times\rho}\]
is a symmetric matrix where each entry $B_{r,r'}$ is the set of all possible tuples $p=(p_1,\dots,p_{k+1})\in P$ such that there exists a walk in $G$ from $r$ to $r'$ passing through $p_1$ edges of index 1, $p_2$ edges of index 2 etc.

\begin{proposition}
    Suppose that $(n,W,T)$ respects basic $d$-Bézout. Then it also respects $d$-Bézout at order 0 if and only if for all $r,r'\in[\rho]$ there exists a tuple $(p_1,\dots,p_{k+1})\in B_{r,r'}\cdot_{\calP}B_{r,r'}$ such that $p_i$ has the same parity as $d_i$ for all $i\in[k+1]$.
\end{proposition}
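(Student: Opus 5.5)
The plan is to prove both implications by splitting a closed walk through $r$ and $r'$ into two walks from $r$ to $r'$. The first step is a lemma on $B$: for every pair $r,r'$, the entry $B_{r,r'}$ is exactly the set of tuples $p\in P$ for which some walk in $G$ from $r$ to $r'$ crosses exactly $p_i$ edges of $G_i$ for each $i$. I would prove this by induction on $j$, showing that $(A^j)_{r,r'}$ is the set of such tuples coming from walks of length exactly $j$. The inductive step uses that $\cdot_{\calP}$ adds tuples and discards those exceeding some $d_i$, while $+_{\calP}$ takes unions. The truncation at $A^\delta$ is harmless, because a walk whose count vector lies in $P$ has length $\sum_i p_i\le\delta$.

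For the forward direction, assume $(n,W,T)$ respects $d$-Bézout at order $0$ and fix $r,r'$. Take a combinatorial line $l$ through both, with intersection numbers $(l_1,\dots,l_{k+1})$ satisfying $l_i\le d_i$ and $l_i\equiv d_i \pmod 2$. Rotate the closed walk so that it starts at $r$, and cut it at an occurrence of $r'$. This gives a walk $r\to r'$ with counts $p$ and a walk $r'\to r$ with counts $p'$. Reversing the second walk, which is allowed since $G$ is undirected, gives a walk $r\to r'$. Since $p+p'=l$ is componentwise at most $d$, both $p$ and $p'$ lie in $P$, so both lie in $B_{r,r'}$. Hence $l\in B_{r,r'}\cdot_{\calP}B_{r,r'}$, and it has the required parities.

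For the converse, take $q=p+p'\in B_{r,r'}\cdot_{\calP}B_{r,r'}$ with $q_i\equiv d_i$ for all $i$. Membership in the product already gives $q_i\le d_i$. Choose walks $\gamma,\gamma'$ from $r$ to $r'$ realizing $p$ and $p'$. Then $\gamma$ followed by the reverse of $\gamma'$ is a closed walk through $r$ and $r'$ with intersection numbers $q$, which is a combinatorial line.

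The main obstacle is pinning down what the lemma on $B$ and the definition of a combinatorial line mean in degenerate cases. When $r=r'$, the walk of length $0$ gives $(0,\dots,0)\in B_{r,r}$, and a closed walk through $r$ may simply start and end there, so the cutting argument still applies. We also need $G_1$ to genuinely force $l_1=1$. This follows from the parity condition together with $l_1\le d_1=1$, which are exactly the parity and bound conditions the product encodes. The role of the hypothesis of basic $d$-Bézout is only to guarantee that $d$ is a sensible target, so that the parity and bound conditions coincide with the definition; it is not needed in the walk-splitting argument itself.
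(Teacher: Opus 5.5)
Your proof is correct and uses the same walk-splitting argument as the paper. A tuple in $B_{r,r'}\cdot_{\calP}B_{r,r'}$ is a sum of the count vectors of two walks between $r$ and $r'$, and joining them (by the symmetry of $B$, i.e.\ reversing one walk) gives a closed walk through $r$ and $r'$ with the required bounds and parities. You also spell out two things the paper leaves implicit: the inductive characterization of the entries of $B$, and the forward direction, where a combinatorial line is cut at occurrences of $r$ and $r'$.
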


\begin{proof}
    There exists a tuple $(p_1,\dots,p_{k+1})\in B_{r,r'}\cdot_{\calP}B_{r,r'}$ if and only if there exist tuples $p',p''\in B_{r,r'}$ with $p'+p''=p$, which means that there is a walk in $G$ from $r$ to $r'$ passing through $p'_i$ edges of $G_i$ and a walk from $r'$ to $r$ passing through $p''_i$ edges of $G_i$: by joining them we get the required combinatorial line.
\end{proof}


\subsection{Bézout's criterion of order \texorpdfstring{$n$}{n}}\label{subsec:bezoutn}

The following is a family of Bézout criteria defined for floatless curves. Consider as before a $(1,d_2,\dots,d_k)$-arrangement $(\calC_1=\calL,\dots,\calC_k)$ inducing $(V,E,F)$, and a combinatorial curve $(n,W,T)\pitchfork(V,E,F)$ which respects $(d_1,\dots,d_{k+1})$-Bézout at order 0, and for which $T_g$ is the trivial tree for all $g$, i.e.\ $(n,W,T)$ is floatless. Then $(n,W,T)$ is fully characterized by the combinatorial cell complex $(V_{\gr},E_{\gr},F_{\gr})$. Let $l$ be a combinatorial line on $(n,W,T)$. A \textbf{realization} of $l$ is a floatless combinatorial curve $(n^{(l)},W^{(l)},T^{(l)})$ transverse to $(V_{\gr},E_{\gr},F_{\gr})$ which has the curve type of a line (so, has a pseudoline and only the external region) and that agrees with $l$ when seen as a walk on the graph of $(V_{\gr},E_{\gr},F_{\gr})$. Concretely, we can find all such realizations by determining from $l$ the values $n^{(l)}$, generating all possible Dyck words and filtering out those that have the wrong curve type or do not agree with $l$.

\begin{definition}
    For any $n$ positive integer, we say that $(n,W,T)$ \textbf{respects $d$-Bézout at order $n$} if for all pairs of regions $r,r'\in F_{\gr}$ there exists a $d$-combinatorial line $l$ through $r,r'$ and a realization $(n^{(l)},W^{(l)},T^{(l)})\pitchfork(V_{\gr},E_{\gr},F_{\gr})$ that respects $(d_1,\dots,d_{k+1},1)$-Bézout at order $n-1$.
\end{definition}

\begin{example}
    \begin{figure}
        \centering
        \includegraphics[width=0.7\linewidth]{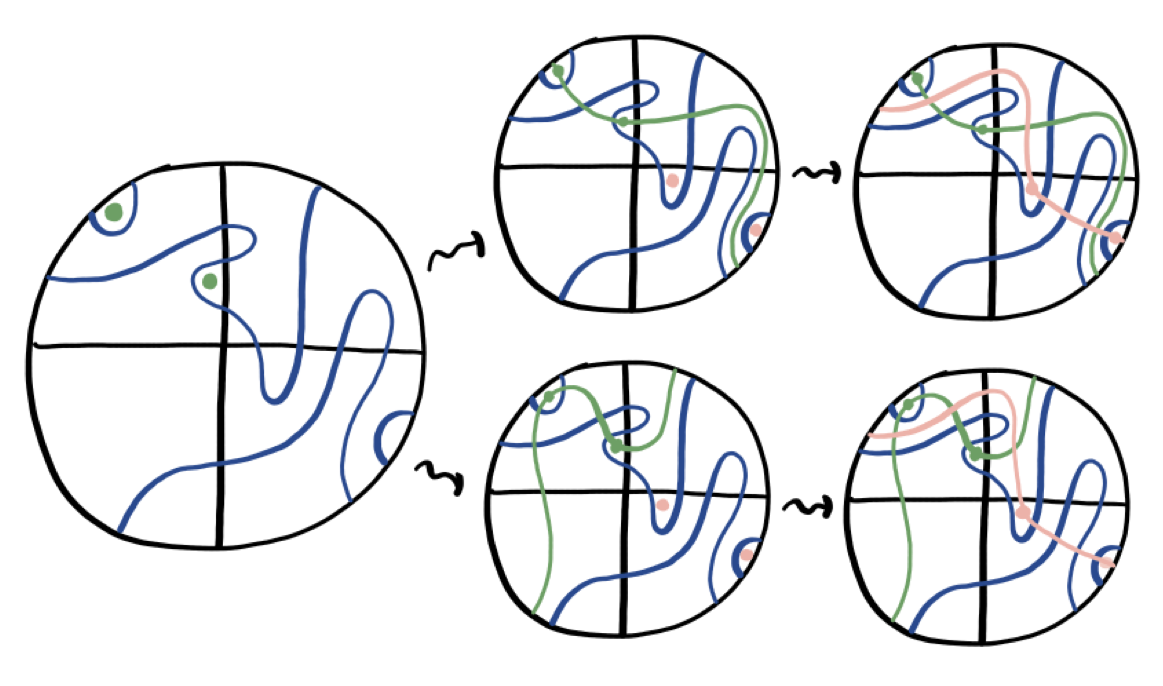}
        \caption{An example of combinatorial curve that respects Bézout at order 0 but not at order 1.}
        \label{fig:bezout1}
    \end{figure}
    The combinatorial curve in \ref{fig:bezout1} does not respect Bézout at order 1: although there exist combinatorial lines both through the pair of pink and green dots, we cannot simultaneously choose for both pairs combinatorial lines that meet only once.
\end{example}

\begin{definition}
    The combinatorial curve $(n,W,T)$ \textbf{admires $d$-Bézout} if it respects $d$-Bézout at order $n$ for all $n$ positive integers.
\end{definition}

Clearly, Bézout at order $n$ and admiration are necessary conditions for realizability. There are some natural questions related to admiration:
\begin{enumerate}
    \item are there non-$d$-realizable $d$-admirers of Bézout?
    \item is there a finite procedure to check $d$-admiration?
\end{enumerate}

We implemented an algorithm that checks Bézout at order $n$ for floatless arrangements. In practice, we used Bézout at order 1 in our analysis, but even checking Bézout at order 2 is too slow to be of any use. This makes it hard to study higher-order Bézout criteria and admiration at the moment.

Also, an interesting problem is to find an effective way to check Bézout at orders $>0$ for non-floatless arrangements: the most problematic step is finding all the realizations of a combinatorial line, since the presence of floating ovals increases the complexity considerably.

\section{Construction of realizable cases}\label{sec:constructions}

\subsection{Viro's patchworking construction}\label{subsec:viro}

The main tool we employ to generate realizable cases is Viro's Patchworking Construction \cite{viro2006patchworking}. What is striking is that even by taking into consideration the interactions of the curve with the three lines $\calL_x,\calL_y,\calL_z$, no adjustments have to be made to the original construction by Viro.

In this section we follow the treatment given in \cite{gelfand1994discriminants}[11, §5] (in particular, no unimodularity assumptions are needed). We notice that through the theory of combinatorial cell complexes and arrangements developed in the previous sections, Viro's construction is particularly natural to formulate: consider the polytope
\[Q_d=\conv\{(0,0),(d,0),(0,d)\}\subset\bbR^2\]
and let $A=Q_d\cap\bbZ^2$ by its integer points. A \textbf{triangulation} $(\calV,\calE,\calF)$ of $(Q_d,A)$ is a cellular decomposition of $Q_d$ into triangles (with straight segments as edges, as opposed to ``topological'' triangles) such that $\calV\subseteq A$, i.e.\ every vertex belongs to $A$. A \textbf{regular triangulation} is a triangulation such that there exists a height function $h\colon\calV\to\bbR$ with the property that by taking the convex hull of the graph $\Gamma_h=\{(x,y,z)\in\bbR^3\mid(x,y)\in\calV,z=h(x,y)\}$ and projecting down through $(x,y,z)\mapsto(x,y)$ the faces of the hull, we recover exactly the original triangles in $\calF$.

Suppose we are given a regular triangulation $\calT=(\calV,\calE,\calF)$ of $(Q_d,A)$, with vertices $\calV$ and height function $h\colon\calV\to\bbR$. Choose an arbitrary sign function $\varepsilon\colon\calV\to\{+,-\}$ and let
\[n_e=\begin{cases}1&\text{if $\varepsilon(\partial_0(e))\neq\varepsilon(\partial_1(e))$,}\\0&\text{otherwise}.\end{cases}\]
Since each triangle $f\in\calF$ has either 0 or 2 edges $e$ with $n_e=1$, the only possible choice of Dyck word $W_f$ is resp.\ the empty word or ``()''. If we now choose the trivial rooted tree for every ground face $g\in\calF_{\gr}$ we get a combinatorial curve $(n,W,T)$ transverse to the combinatorial cell complex $(V,E,F)$ induced by $(\calV,\calE,\calF)$. Now let $i\colon E\to\{0,1\}$ be the index function such that $i(e)=1$ if $e\subset\partial Q_d$ is on the boundary of the triangle and $i(e)=0$ otherwise. By removing the edges of index 0 as in §\ref{subsec:removal} we obtain a combinatorial curve $(n_{xyz},W_{xyz},T_{xyz})$ transverse to the triangle $Q_d$.

Finally, let $\varepsilon_{xYZ}(x,y)=(-1)^x\varepsilon$, $\varepsilon_{XyZ}(x,y)=(-1)^y\varepsilon$ and $\varepsilon_{XYz}(x,y)=(-1)^{x+y}\varepsilon$, and construct resp.\ $(n_{xYZ},W_{xYZ},T_{xYZ})$, $(n_{XyZ},W_{XyZ},T_{XyZ})$ and $(n_{XYz},W_{XYz},T_{XYz})$ in the same way. Then by interpreting these as combinatorial curves on the faces $xyz,xYZ,XyZ,XYz$ of $(V^{(3)},E^{(3)},F^{(3)})$ we obtain a combinatorial curve which we name $C(\calT,\varepsilon)$.

\begin{theorem}[Viro]
    The combinatorial curve $C(\calT,\varepsilon)$ is $(1,1,1,d)$-realizable from the polynomial
\[f_t(x,y,z)=\sum_{(a,b)\in\calV}\varepsilon(a,b)t^{h(a,b)}x^ay^bz^{d-a-b}\]
for any $t>0$ sufficiently small.
\end{theorem}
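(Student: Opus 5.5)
The plan is to reduce the statement to the classical Viro patchworking theorem, in the version of \cite{gelfand1994discriminants}[11, §5]. That version describes the pair $(\bbP^2(\bbR),\{f_t=0\})$ for small $t>0$ together with its position relative to the coordinate axes. We then translate that description into the combinatorial language of Theorem \ref{thm:main} and Corollary \ref{cor:main_eq}.

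First we recall the topological content of the theorem. Glue four copies of $Q_d$, obtained by the reflections $(x,y)\mapsto(\pm x,\pm y)$, along their edges so as to obtain a model $\widetilde{Q}_d\simeq\bbP^2(\bbR)$. Extend $\varepsilon$ to the copies by the rule $(-1)^{x}\varepsilon$, $(-1)^{y}\varepsilon$, $(-1)^{x+y}\varepsilon$. In each triangle of the reflected triangulations, draw a segment separating the vertices of opposite sign. Viro's theorem asserts that there is a homeomorphism $\widetilde{Q}_d\to\bbP^2(\bbR)$ sending the union of these segments to $\{f_t=0\}$ for $t$ small. It also sends each copy of $Q_d$ to the closure of the corresponding orthant (face $xyz,xYZ,XyZ,XYz$), and each boundary edge of $Q_d$ to the corresponding coordinate line. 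Gelfand--Kapranov--Zelevinsky state this compatibility with the toric strata, i.e.\ with the orbits $X^0(\Gamma)$, for every face $\Gamma\subseteq Q_d$, including the edges. This is exactly what we need: $\{f_t=0\}$ is transverse to $\{xyz=0\}$ and meets no vertex.

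Second, we compute the combinatorial curve of this piecewise-linear model and check that it coincides with $C(\calT,\varepsilon)$. On the refined cell complex given by the triangulation with its interior edges, the PL curve crosses an edge exactly when its endpoints have different signs, once in that case. Each triangle gets the word ``()'' or the empty word, and there are no floating ovals. So the PL curve induces exactly the combinatorial curve $(n,W,T)$ constructed above. By §\ref{subsec:removal}, deleting the interior edges (index $0$) produces a combinatorial curve that depends only on $(n,W,T)$ and the gluing data $\psi$. That result is therefore the combinatorial curve induced by the same geometric curve with respect to the coarser decomposition of the triangle $Q_d$ by its three sides. Doing this in each of the four quadrant copies with the modified sign functions gives the four face-wise pieces of $C(\calT,\varepsilon)$. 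Their intersection numbers on shared edges agree, because the sign rules coincide on the common axes; hence they assemble to one combinatorial curve on $(V^{(3)},E^{(3)},F^{(3)})$. By injectivity in Theorem \ref{thm:main}, the arrangement $(\calL_x,\calL_y,\calL_z,\{f_t=0\})$ is then equivalent to any realization of $C(\calT,\varepsilon)$.

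The main obstacle is the bookkeeping in the identification of $\widetilde{Q}_d$ with $\bbP^2(\bbR)$. One must check that the orientations and characteristic maps of the four faces in Example \ref{ex:lines_xyz2} match the orientations coming from the reflected triangles. One must also check that the map $\psi$ used in the edge removal is the one induced by the gluing. I would handle this by fixing the moment-map homeomorphism $(x:y:z)\mapsto(|x|,|y|,|z|)/(|x|+|y|+|z|)$ on each orthant and verifying the three boundary edges of $Q_d$ one at a time. A further subtle point is that the homeomorphism from Viro's theorem may not fix the cell structure. It does, however, map each stratum to itself, so after composing with a homeomorphism fixing $(\calV^{(3)},\calE^{(3)},\calF^{(3)})$ the induced combinatorial curve is unchanged.
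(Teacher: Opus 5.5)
Your proposal is correct and follows essentially the paper's route: you invoke the GKZ form of Viro's patchworking theorem for transversality to $\{xyz=0\}$ and for the topological type in each closed orthant, identify that type with the combinatorial curve obtained by removing the interior edges of the piecewise-linear model, and assemble the four faces into $C(\calT,\varepsilon)$; you simply spell out bookkeeping that the paper leaves implicit. Two small touch-ups: realizability also requires $\{f_t=0\}$ to be nonsingular for small $t$, which your summary of Viro's theorem omits and which you should quote from the same source as the paper does; and on the sides lying on $\calL_z$ the sign functions of adjacent copies agree only up to the global factor $(-1)^d$, not exactly, though this still gives equal crossing numbers.
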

\begin{proof}
    Let $\calC_t=\{f_t=0\}$. As stated in \cite{gelfand1994discriminants}[Page 382], for $t>0$ sufficiently small, the curve $\calC_t$ is nonsingular, intersects the axes $xyz=0$ transversely, and its isotopy type in the space of nonsingular curves of degree $d$ transverse to the axes only depends on $(\calT,\varepsilon)$. By \cite{gelfand1994discriminants}[Theorem 11.5.6], the intersection $\calC_t\cap\{x\ge0,y\ge0,z\ge0\}$ has topological type $(n_{xyz},W_{xyz},T_{xyz})$, and similarly for the other faces. Thus, the arrangement $(\calL_x,\calL_y,\calL_z,\calC_t)$ has topological type $C(\calT,\varepsilon)$.
\end{proof}

\begin{figure}
    \centering
    \includegraphics[width=0.7\linewidth]{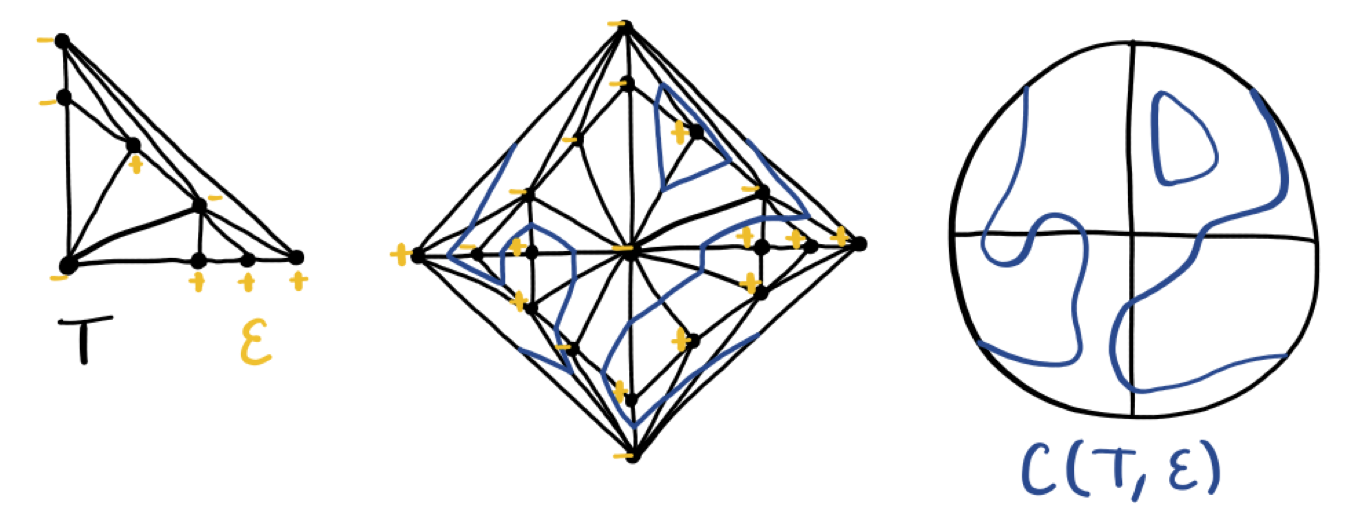}
    \caption{Example of Viro's patchworking.}
    \label{fig:viro}
\end{figure}

\subsection{Translations}\label{sec:translations}

Suppose we are given a degree $d$ curve $\calD\pitchfork(\calL_x,\calL_y,\calL_z)$ with affine equation $f(x,y)=0$. Then a generic vertical translation $f(x,y-y_0)=0$ of $\calD$ gives us a new degree $d$ curve $\calD_{y_0}\pitchfork(\calL_x,\calL_y,\calL_z)$. In order to generate new realizable combinatorial curves from old ones, it would be useful to have a procedure to turn the combinatorial data $D\pitchfork(V^{(3)},E^{(3)},F^{(3)})$ induced by $\calD$ into the combinatorial data $D_{y_0}$ induced by $\calD_{y_0}$. Unfortunately, knowing $D$ is not enough to be able to recover $D_{y_0}$ for any $y_0$: the topological type of the arrangement might change for any critical point of the linear functional $\alpha(x,y)=y$ restricted to $\calD$, and at the intersections $\calD\cap\calL_x$.

However, by the algebraicity of $\calD$ there is only a finite number of such points, so for $y_0\gg0$ large enough the topological type of the upward translation $\calD_{y_0}$ depends uniquely on the topological type of $\calD$. Let $D=(n,W,T)$. Then, to construct $D_{y_0}$ (Figure \ref{fig:translation}),
\begin{enumerate}
    \item remove $L_y$ from the configuration, obtaining $(n',W',T')\pitchfork(V^{(2)},E^{(2)},F^{(2)})$;
    \item insert a line $L_y'$ going through the same ground faces that $L_z$ meet: concretely, the two faces above $L_y'$
    \[W''_{xyz}=W'_{xXZz},\quad W''_{xYZ}=W'_{xXzZ}\]
    while $W''_{XyZ}$ is the word comprised of $n_Z$ consecutive ``('' followed by $n_Z$ ``)'', and similarly $W''_{XYz}$ is made of $n_z$ consecutive ``('' followed by the same number of ``)''. The values $n''$ are set from $n'$ as follows,
        \[n''_{x}=n_x+n_X,\quad n''_y=n_Z,\quad n_z''=n_z,\quad n''_X=0,\quad n''_Y=n_z,\quad n''_Z=n_Z,\]
    while the floating trees $T''$ are inherited from $T'$ on the faces $xyz,xYZ$ and are trivial on $XyZ,XYz$.
\end{enumerate}
The result $(n'',W'',T'')$ is the combinatorial curve $D_{y_0}$ we were looking for. Other translations can be obtained from this one by applying the automorphisms of $(V^{(3)},E^{(3)},F^{(3)})$, which do not affect the algebraicity or the degree of the curve since every such automorphism comes from a projective transformation. This two-step process of using the upward translation and other automorphisms can be repeated multiple times: in practice, we apply it to arrangements generated by Viro's patchworking, repeating it until no new cases are obtained.

\begin{figure}
    \centering
    \includegraphics[width=0.7\linewidth]{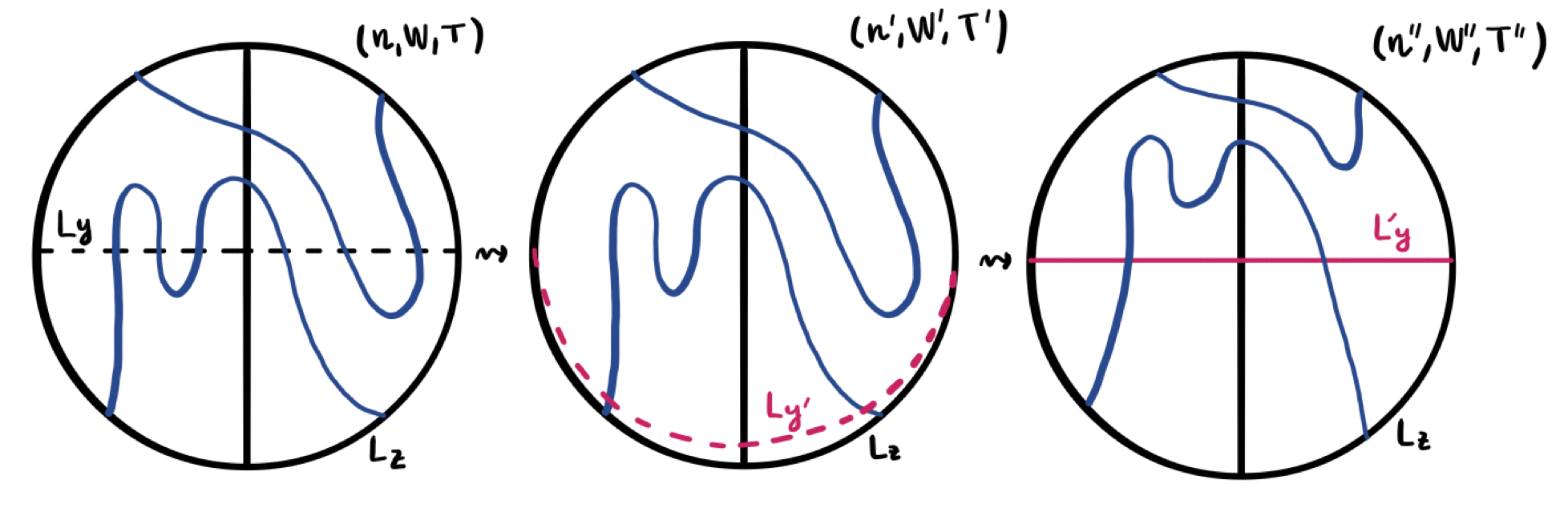}
    \caption{Example of a translation.}
    \label{fig:translation}
\end{figure}

\section{Classification} \label{sec:results}

In the repository associated to this work \cite{NWTrepo}, a full database of the combinatorial curves discussed here is available. In particular, to reproduce the results in this section it is sufficient to run the notebooks \texttt{cubic.ipynb} and \texttt{quartic.ipynb}. All computations use exact arithmetic throughout.

\subsection{Description of the enumeration}

The classification procedure has many steps, which we now describe in the case of arrangements of three lines and a quartic. The case of cubics is similar but simpler.

\paragraph{\textbf{Step 1 - Enumeration of all regular triangulations}} Viro's construction (§\ref{subsec:viro}) takes as one of its inputs any regular triangulation of $\conv\{(0,0),(4,0),(0,4)\}$. To generate as many cases as possible, we start by generating all possible such triangulations. This is carried out in the the \texttt{Macaulay2} \cite{M2} script \texttt{quartic\_triangulations.m2} and uses the \texttt{Topcom} \cite{TopcomSource} package. The symmetry group of the triangle acts on the set of regular triangulations, and in order to reduce the number of cases, only one triangulation per orbit is kept in the database. The output is the database \texttt{quartic\_triangulations.csv} and consists of 97460 triangulations, together with the values of the height functions which certify that the triangulations are regular.

\paragraph{\textbf{Step 2 - Viro's patchworking}} Viro's patchworking (§\ref{subsec:viro}) is applied to every triangulation and every possible sign choice. Symmetries arising from permutations of the faces is leveraged in order to again reduce the number of sign choices considered. In particular, once a sign $\varepsilon$ has been used, the signs relative to the other 4 faces such as $\varepsilon(x,y)=(-1)^{x}\varepsilon(x,y)$ are not considered.

\paragraph{\textbf{Step 3 - Symmetries and translations}} To the resulting combinatorial curves, we first apply the action of the automorphism group $(V^{(3)},E^{(3)},F^{(3)})$ (Example \ref{ex:lines_xyz2}) in order to recover those cases not considered by removing the symmetries in Step 1 and 2. Then we add those obtained by upward translation (§\ref{sec:translations}) and again the action of the automorphism group of $(V^{(3)},E^{(3)},F^{(3)})$; this process is repeated until no new curves emerge, which is after 2 times. These make up the set of realized curves \texttt{viro}, saved in the compressed database \texttt{quartic\_viro.jsonl.gz} together with information on the triangulation they were generated from.

\paragraph{\textbf{Step 4 - Generation of all floatless cases}} To get the admissible curves, which are our candidates for what can be realizable, we first generate all combinatorial curves transverse to $(V^{(3)},E^{(3)},F^{(3)})$ which
\begin{itemize}
    \item are floatless;
    \item have \textbf{quartic curve type}, meaning they have curve type (§\ref{subsec:curvetype}) equal to one of the 6 possible for quartic curves (between 0 and 4 non-nested ovals, or two nested ovals);
    \item respect Bézout at order 1 (§\ref{subsec:bezoutn}).
\end{itemize}
We end up with 40270 curves. This is done by first generating the possible intersection numbers $n$ which satisfy basic Bézout and the rule that each $n_f$ must be even. Then, for each face $f$, we generate all possible Dyck words $W_f$ with lengths compatible with $n$. Finally, from every combination of Dyck words, the combinatorial curves $(n,W,T_0)$ with all trivial floating trees are formed: for each of these, we check whether the curve type is that of a quartic and whether it satisfies Bézout at order 1, removing it if the criteria are not satisfied.

\paragraph{\textbf{Step 5 - Generation of all admissible cases}} We first need a lemma.

\begin{lemma}
    Suppose that $(n,W,T)\pitchfork(V^{(3)},E^{(3)},F^{(3)})$ is a $(1,1,1,4)$-realizable combinatorial curve. Let $(n,W,T_0)$ be the floatless curve obtained by removing all floating ovals. Then $(n,W,T_0)$ has quartic curve type and admires $(1,1,1,4)$-Bézout, so it is among those generated in Step 4.
\end{lemma}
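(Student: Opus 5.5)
The plan is to prove the two claims separately. Neither can be obtained by applying the necessary conditions of §\ref{subsec:curvetype}--§\ref{subsec:bezoutn} to $(n,W,T_0)$ directly, because $(n,W,T_0)$ need not itself be $(1,1,1,4)$-realizable: deleting ovals from an algebraic quartic does not give an algebraic curve. Instead I will work with a fixed realization $\calC$ of $(n,W,T)$ and its ground part $\gr(\calC)$. The topological input is that $(\calL_x,\calL_y,\calL_z,\gr(\calC))$ realizes $(n,W,T_0)$ topologically, and that the refinement $(V_{\gr},E_{\gr},F_{\gr})$ depends only on $(n,W)$ (§\ref{subsec:refinement}). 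So all conditions defining Bézout at order $n$ for floatless curves are statements about the cellular decomposition induced by $\gr(\calC)$.

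\emph{Quartic curve type.} By Theorem \ref{thm:main}, the curve type of $(n,W,T_0)$ is the topological type of $\gr(\calC)\subset\bbP^2(\bbR)$. This is the real scheme of $\calC$ with the floating ovals deleted. The list of real schemes of quartics is: $k$ non-nested ovals for $0\le k\le4$, or one nest of two ovals. This list is closed under deleting any subset of ovals, since deleting one oval of the nest leaves a single oval. Hence $\gr(\calC)$ has quartic curve type.

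\emph{Admiration.} I will prove a stronger statement by induction on $n\ge0$, general enough to be stable under adding a line. Call a tuple of curves $(\calC_1=\calL,\calC_2,\dots,\calC_k)$ \emph{$d$-semi-algebraic} if each $\calC_i$ is a union of components of a nonsingular algebraic curve $\widetilde\calC_i$ of degree $d_i$, the curves $\widetilde\calC_i$ are pairwise transverse, and each discarded component $\widetilde\calC_i\setminus\calC_i$ is an oval. The key observation is the following. If $\calM$ is a line transverse to all the $\widetilde\calC_i$, then $\#(\calM\cap\calC_i)\le d_i$ and $\#(\calM\cap\calC_i)\equiv d_i\pmod 2$. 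The bound follows from Bézout. The parity follows because a line meets every oval an even number of times, as ovals are null-homologous mod $2$. The claim $P(n)$ is: for every $d$-semi-algebraic cellular tuple, the induced floatless combinatorial structure respects $d$-Bézout at order $n$.

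For the step, take two ground regions $r,r'$. Choose points in them and let $\calM$ be the line through them, perturbed within the open regions so that it is transverse to all $\widetilde\calC_i$ and avoids all vertices. Reading off the regions crossed by $\calM$ gives a closed walk $l$ in the region graph through $r,r'$. It crosses exactly one edge of each line and satisfies the bound and parity conditions by the key observation, so $l$ is a combinatorial line. For $n=0$ this already gives Bézout at order $0$. For $n\ge1$, the combinatorial curve induced by $\calM$ on $(V_{\gr},E_{\gr},F_{\gr})$ has the following properties:
\begin{itemize}
    \item it is floatless, since $\calM$ is connected and meets the lines;
    \item it has the curve type of a line;
    \item it agrees with $l$ by construction.
\end{itemize}
Hence it is a realization of $l$. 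The enlarged tuple $(\calC_1,\dots,\calC_k,\calM)$ is $(d_1,\dots,d_k,1)$-semi-algebraic, and its refinement is cellular by Theorem \ref{thm:gr}. Applying $P(n-1)$ to it shows that the realization respects Bézout at order $n-1$. Taking $\widetilde\calC_4=\calC$ and $\calC_4=\gr(\calC)$ then gives admiration of $(n,W,T_0)$.

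The main obstacle I expect is making the induction hypothesis honestly match the definition of "realization of $l$". Two points need care. First, the walk on the refined complex must be identified with the walk of $\calM$ on the region graph of $(n,W,T_0)$: the ground regions of $\calC$ are exactly the faces of $F_{\gr}$, but floating regions must be merged into their ground face, which is harmless because crossings with floating ovals come in pairs and are discarded. Second, one must check that a generic perturbation of $\calM$ keeps it inside $r$ and $r'$ while achieving transversality to all the $\widetilde\calC_i$ simultaneously.
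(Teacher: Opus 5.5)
Your proposal is correct and takes the same route as the paper. You fix a realization $\calC$, note that $\gr(\calC)$ realizes $(n,W,T_0)$ so deleting floating ovals keeps quartic type, and use actual lines through pairs of regions (meeting $\gr(\calC)$ in at most $4$ points) to get Bézout at order $0$ and then induct on the order. Your ``semi-algebraic'' induction hypothesis and the mod-$2$ parity argument for discarded ovals make explicit what the paper's one-line ``by induction on $n$'' leaves implicit.
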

\begin{proof}
    The condition of having between 0 and 4 non-nested ovals or 2 nested ovals is obviously still met after removing ovals. Let $\calC\subset\bbP^2(\bbR)$ be a curve realizing $(n,W,T)$. Then its ground part $\gr(\calC)$ realizes $(n,W,T_0)$. Any line transverse to $\calC$ is also transverse to $\gr(\calC)$ and intersects it in $\le4$ points, so $(n,W,T_0)$ respects Bézout at order 0, and by induction on $n$ respects Bézout at any order $n$.
\end{proof}

Generate all combinatorial curves $(n,W,T)$ transverse to $(V^{(3)},E^{(3)},F^{(3)})$ which
\begin{itemize}
    \item have quartic curve type;
    \item respect Bézout at order 0; and for which
    \item after removing all floating ovals, the floatless curve $(n,W,T_0)$ is among those generated in Step 4.
\end{itemize}
By the lemma, this is a necessary condition for $(1,1,1,4)$-realizability. This the set of admissible curves \texttt{bezout}, stored in the compressed database \texttt{quartic\_bezout.jsonl.gz}. To generate them, start from the set of floatless curves obtained in Step 4, and insert floating ovals in all possible ways that do not make the resulting arrangement have non-quartic type or violate Bézout at order 0.

This is done as follows: from a floatless curve $(n,W,T_0)$, the \textbf{line distance matrix} is calculated, which is a matrix $L_{r,r'}$ such that for every pair of ground regions $g,g'$, the entry $L_{g,g'}$ records the minimum number of intersections between the curve and a combinatorial line going through both $g,g'$. The line distance matrix is calculated using the matrix $B\in\calP^{\rho\times\rho}$ defined in §\ref{subsec:bezout0}. A few remarks are that:
    \begin{itemize}
        \item the fact that Bézout at order 0 is respected is equivalent to the fact that $L_{g,g'}\le4$ for all entries, and we want each entry to be $\le4$ also after adding the floating ovals.
        \item If we insert a floating region $f$ inside a ground region $g$, then $L_{f,g'}=L_{g,g'}+2$ for all ground regions $g'$.
        \item If we insert two floating regions $f,f'$ inside the ground regions resp.\ $g,g'$ (which can coincide), then $L_{f,f'}=4+L_{g,g'}$.
    \end{itemize}
    Therefore, to find all possible ways to add ovals to $(n,W,T_0)$, let
    \[R=\{g\in F_{\gr}\mid L_{g,g'}\le2\text{ for all $g'\in F_{\gr}$}\}\]
    and let $m$ be the maximum number of floating ovals that could be added to the curve, which is 0 if $(n,W,T_0)$ is made up of 2 nested ovals, and $4-c$ if it has $c$ non-nested ovals. Consider any multiset $M$ of size $\le m$ of elements of $R$: in light of the previous remarks, we can insert one floating oval in each element of the multiset and obtain a curve $(n,W,T)$ respecting Bézout at order 0, if and only if for each pair of elements $g,g'\in M$ (which can be equal if $g$ appears twice or more in $M$) we have $L_{g,g'}=0$. We assume that the added floating ovals do not nest with each other: we do not need to systematically consider the insertion of nested floating ovals, since for quartic curves the only way to have two floating ovals nested with each other is for them to be the whole curve, so there are only 4 such cases (one for each face of $(n,W,T_0)$).

\paragraph{\textbf{Step 6 - Final check}} Verify that \texttt{viro} is a subset of \texttt{bezout} as expected, and find out the difference \texttt{rest}.

The correctness of this algorithm proves that the ``realized'' elements of \texttt{viro} are indeed realizable, and that all realizable cases are among the ``admissible'' elements \texttt{bezout}.

For the following results, recall (Example \ref{ex:unlabeled}) that the combinatorial curves up to $i$-automorphisms are the topological types of labeled arrangements $(\calL_x,\calL_y,\calL_z,\calC)$, while the combinatorial curves up to automorphisms are the topological types of unlabeled arrangements allowing for permutations of the three lines.

\subsection{Up to three lines and a cubic curve}\label{subsec:cubic}

We first look at our results concerning configurations of three lines and a cubic curve. In this case Viro's patchworking and translations produce 2024 distinct combinatorial curves, while our criteria select 2072 admissible arrangements. Their difference up to automorphism consists of just 3 curves, shown in Figure \ref{fig:rest3}. 

\begin{figure*}[t!]
    \centering
    \begin{subfigure}[t]{0.18\textwidth}
        \centering
        \includegraphics[width=0.9\linewidth]{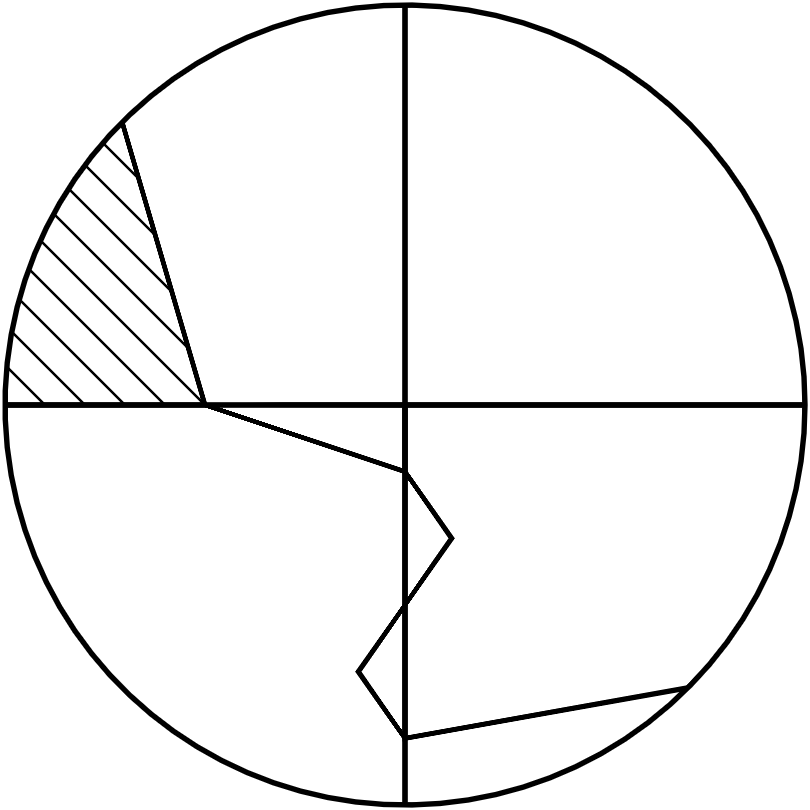}
        \caption{\texttt{rest3-1}}
    \end{subfigure}%
    ~ 
    \begin{subfigure}[t]{0.18\textwidth}
        \centering
        \includegraphics[width=0.9\linewidth]{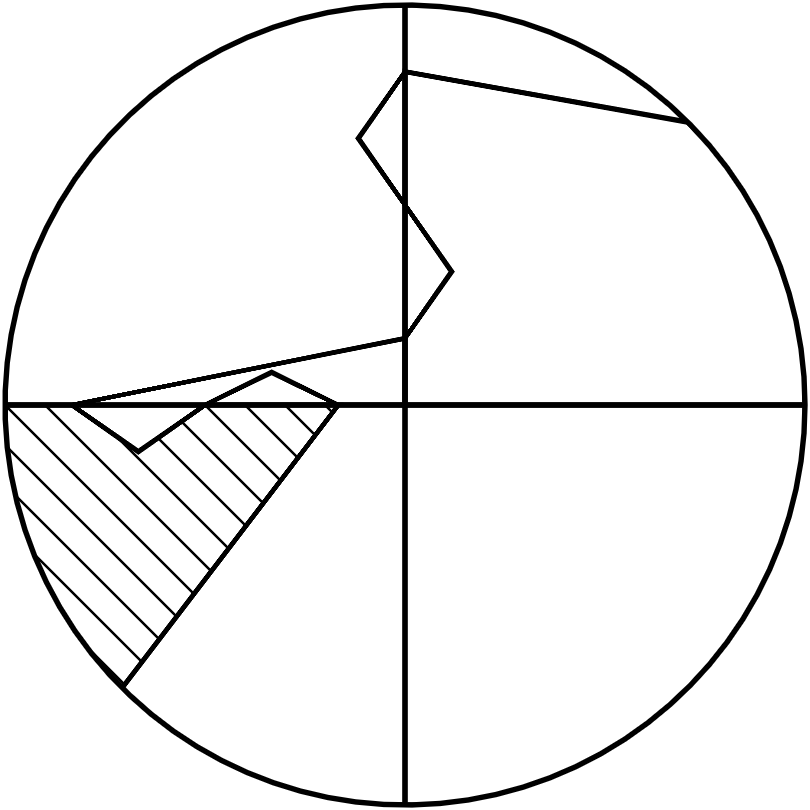}
        \caption{\texttt{rest3-2}}
    \end{subfigure}%
    ~ 
    \begin{subfigure}[t]{0.18\textwidth}
        \centering
        \includegraphics[width=0.9\linewidth]{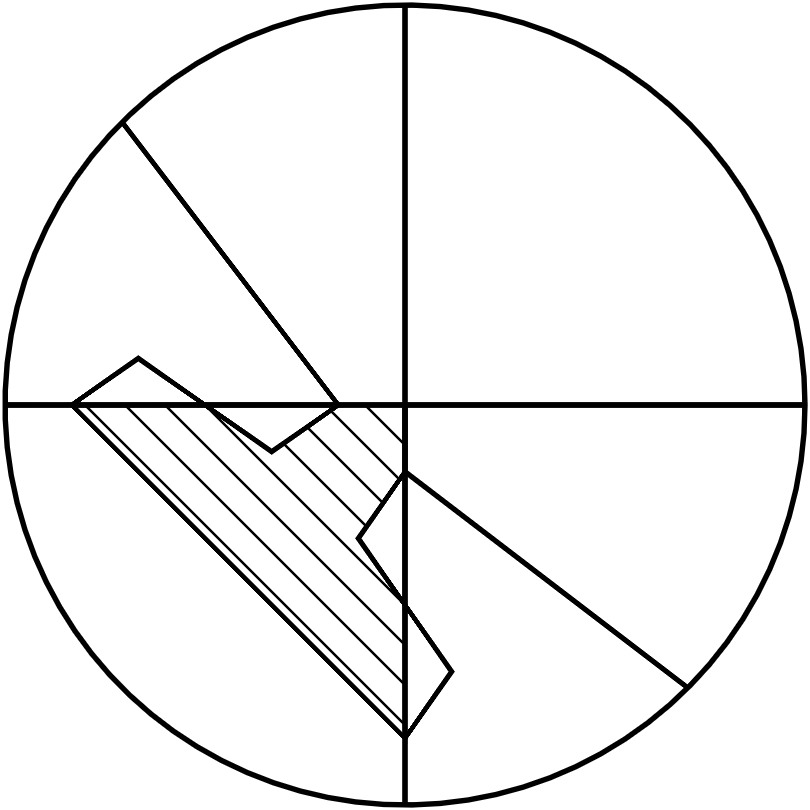}
        \caption{\texttt{rest3-3}}
    \end{subfigure}
    \caption{The three admissible arrangements of three lines and a cubic not obtainable through Viro's patchworking.}
    \label{fig:rest3}
\end{figure*}

All of these three curves are realizable, and explicit equations are given in Table \ref{tab:equations}. The \texttt{Mathematica} \cite{Mathematica} script \texttt{identify.wl} \cite{NWTrepo} certifies that these cubics are nonsingular, intersect $(\calL_x,\calL_y,\calL_z)$ transversely and induce the claimed combinatorial curves. This certification, together with the correctness of the enumeration steps listed before applied to the case of cubic curves, proves Theorem \ref{thm:mainB}: the complete classification of arrangements of three lines and a cubic (Table \ref{tab:xyz3}) is shown in Figure \ref{fig:cubics}.

\begin{table}
    \centering
    \caption{Equations of the three admissible arrangements of three lines and a cubic not obtainable through Viro's patchworking.}
    \begin{tabular}{ c|c }
        Name & Equation \\
        \texttt{rest3-1} & ${\scriptstyle 8x^3 + 22x^2y + 33x y^2 + 7y^3 + 33x^2 z + 31x y z + 34y^2 z + 50x z^2 + 25y z^2 + 3z^3 = 0}$\\
        \texttt{rest3-2} & ${\scriptstyle -504x^3+571x^2 y-190x y^2+15y^3-346x^2 z+242x y z-34y^2 z-79x z^2+25y z^2-6z^3=0}$ \\
        \texttt{rest3-3} & ${\scriptstyle 1001x^3 -530x^2y -400x y^2 +950y^3 +1540x^2 z +1700x y z +1470y^2 z +759x z^2 +738y z^2 +121 z^3 = 0}$ \\
    \end{tabular}
    \label{tab:equations}
\end{table}

\begin{figure}
    \centering
    \includegraphics[width=1\linewidth]{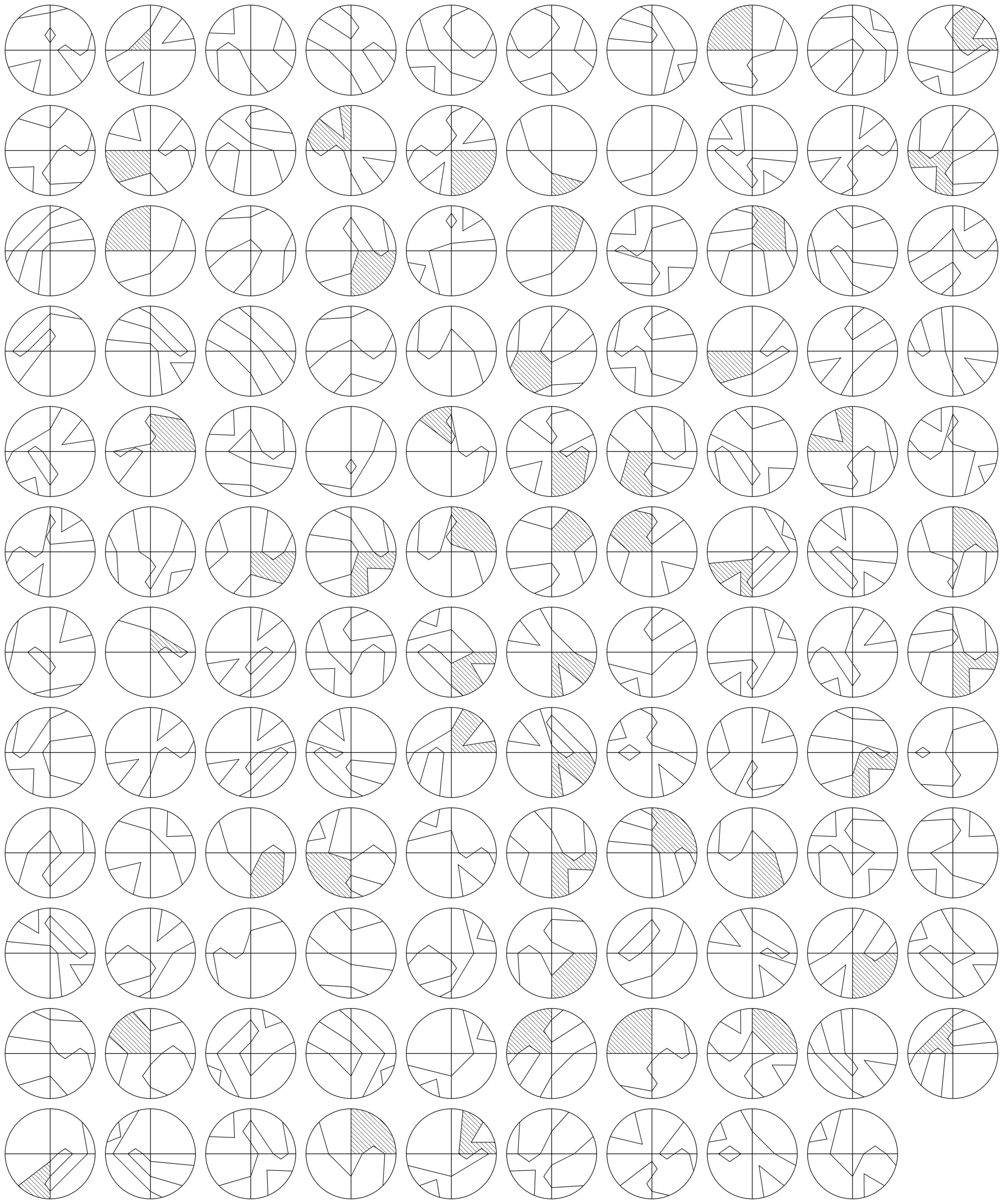}
    \caption{Arrangements up to automorphism of three lines and a cubic.}
    \label{fig:cubics}
\end{figure}

\begin{table}
    \centering
    \caption{\textbf{ \bm{$(\calL_x,\calL_y,\calL_z,\calC)$}, \bm{$\deg(\calC)=3$}}}
    \begin{tabular}{ c|c|c|c }
        & \multicolumn{2}{|c|}{Cubic type} & \multirow{2}{2em}{Total} \\
        & \qtA & \qtB \\
        \hline
        \# Combinatorial curves & 572 & 1500 & 2072 \\
        Up to $i$-automorphism  & 143 &  375  & 518 \\
        Up to automorphism  & 34  &  85  & 119
    \end{tabular}
    \label{tab:xyz3}
\end{table}

We can also specialize this result to configurations of two lines and a cubic (Table \ref{tab:xz3} and Figure \ref{fig:cubics_xz}).

\begin{table}
    \centering
    \caption{\textbf{ \bm{$(\calL_x,\calL_z,\calC)$}, \bm{$\deg(\calC)=3$}}}
    \begin{tabular}{ c|c|c|c }
        & \multicolumn{2}{|c|}{Cubic type} & \multirow{2}{2em}{Total} \\
        & \qtA & \qtB \\
        \hline
        \# Combinatorial curves & 13 & 33 & 46 \\
        Up to $i$-automorphism  & 5 & 11 & 16 \\
        Up to automorphism  & 4 & 8 & 12
    \end{tabular}
    \label{tab:xz3}
\end{table}

\begin{figure}
    \centering
    \includegraphics[width=0.7\linewidth]{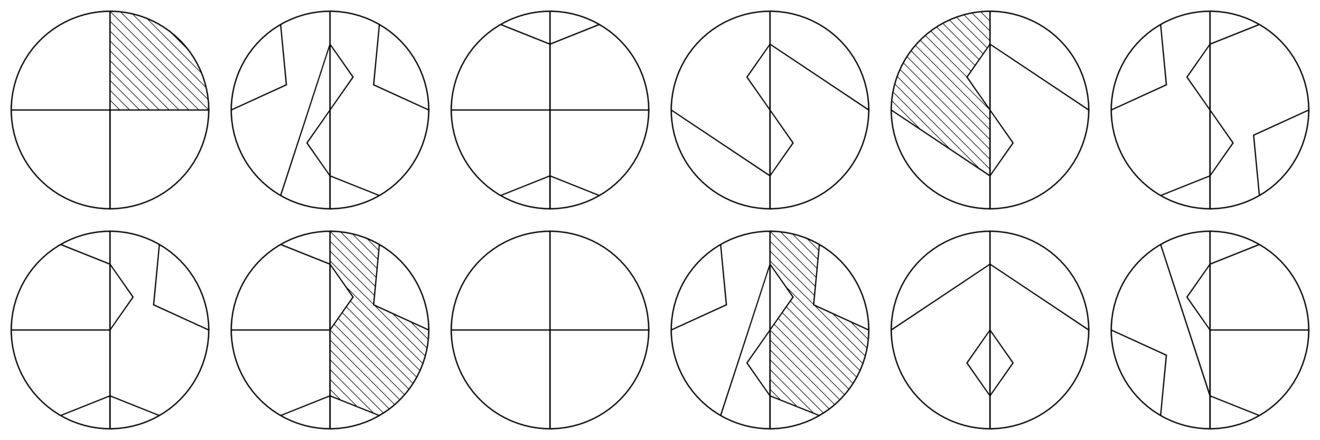}
    \caption{Arrangements up to automorphism of two lines and a cubic.}
    \label{fig:cubics_xz}
\end{figure}

\subsection{Up to three lines and a quartic curve}

Regarding configurations of three lines and a quartic curve, partial results were obtained. In Table \ref{tab:xyz4}, the ``Admissible'' cases are all possible combinatorial curves respecting Bézout at order 0, and that respect Bézout at order 1 after removing all floating ovals. The ``Realized'' cases are those obtained through Viro's patchworking, and after applying two translations as in § \ref{sec:translations}.

\begin{table}
    \centering
    \caption{\textbf{ \bm{$(\calL_x,\calL_y,\calL_z,\calC)$}, \bm{$\deg(\calC)=4$}}}
    \begin{tabular}{ c|c|c|c|c|c|c|c }
        \# Combinatorial curves & \multicolumn{6}{|c|}{Quartic type} & \multirow{3}*{Total} \\
        (Up to $i$-automorphism) & \multirow{2}*{\qtA} & \multirow{2}*{\qtB} & \multirow{2}*{\qtC} & \multirow{2}*{\qtD} & \multirow{2}*{\qtE} & \multirow{2}*{\qtF} \\
        {[Up to automorphism]} & & & & & & & \\
        \hline
        \multirow{3}{9em}{Admissible} & 1 & 13673 & 48208 & 110972 & 210430 & 20631 & 403915 \\
        & (1) & (3434) & (12088) & (27782) & (52684) & (5190) & (101179) \\
        & {[1]} & {[619]} & {[2123]} & {[4834]} & {[9107]} & {[940]} & {[17624]} \\
        \hline
        \multirow{3}{9em}{Realized} & 1 & 13673 & 41560 & 66668 & 72340 & 18687 & 212929 \\
        & (1) & (3434) & (10426) & (16703) & (18142) & (4704) & (53410) \\
        & {[1]} & {[619]} & {[1837]} & {[2921]} & {[3193]} & {[855]} & {[9426]} \\
        \hline
        \multirow{3}{9em}{Difference} & 0 & 0 & 6648 & 44304 & 138090 & 1944 & 190986 \\
        & (0) & (0) & (1662) & (11079) & (34542) & (486) & (47769) \\
        & {[0]} & {[0]} & {[286]} & {[1913]} & {[5914]} & {[85]} & {[8198]}
    \end{tabular}
    \label{tab:xyz4}
\end{table}

We notice that the difference is 0 in the case where the quartic has exactly one oval. Thus, our calculation provides a proof of Theorem \ref{thm:mainC}.

If we specialize our results to floatless configurations (Table \ref{tab:xyz4floatless}), we see that a complete classification is almost obtained: there remain (up to automorphism) 49 cases which are admissible but not realized, shown in Figure \ref{fig:rest_floatless}.

\begin{table}
    \centering
    \caption{\textbf{ \bm{$(\calL_x,\calL_y,\calL_z,\calC)$}, \bm{$\deg(\calC)=4$}, floatless case}}
    \begin{tabular}{ c|c|c|c|c|c|c|c }
        \# Combinatorial curves & \multicolumn{6}{|c|}{Quartic type} & \multirow{3}*{Total} \\
        (Up to $i$-automorphism) & \multirow{2}*{\qtA} & \multirow{2}*{\qtB} & \multirow{2}*{\qtC} & \multirow{2}*{\qtD} & \multirow{2}*{\qtE} & \multirow{2}*{\qtF} \\
        {[Up to automorphism]} & & & & & & & \\
        \hline
        \multirow{3}{9em}{Admissible} & 1 & 13669 & 13494 & 6306 & 1985 & 4815 & 40270 \\
        & (1) & (3433) & (3408) & (1593) & (512) & (1236) & (10183) \\
        & {[1]} & {[618]} & {[619]} & {[305]} & {[105]} & {[235]} & {[1883]} \\
        \hline
        \multirow{3}{9em}{Realized} & 1 & 13669 & 13374 & 5922 & 1529 & 4815 & 39310 \\
        & (1) & (3433) & (3378) & (1497) & (395) & (1236) & (9940) \\
        & {[1]} & {[618]} & {[613]} & {[285]} & {[82]} & {[235]} & {[1834]} \\
        \hline
        \multirow{3}{9em}{Difference} & 0 & 0 & 120 & 384 & 456 & 0 & 960 \\
        & (0) & (0) & (30) & (96) & (117) & (0) & (243) \\
        & {[0]} & {[0]} & {[6]} & {[20]} & {[23]} & {[0]} & {[49]} 
    \end{tabular}
    \label{tab:xyz4floatless}
\end{table}

\begin{figure}
    \centering
    \includegraphics[width=0.7\linewidth]{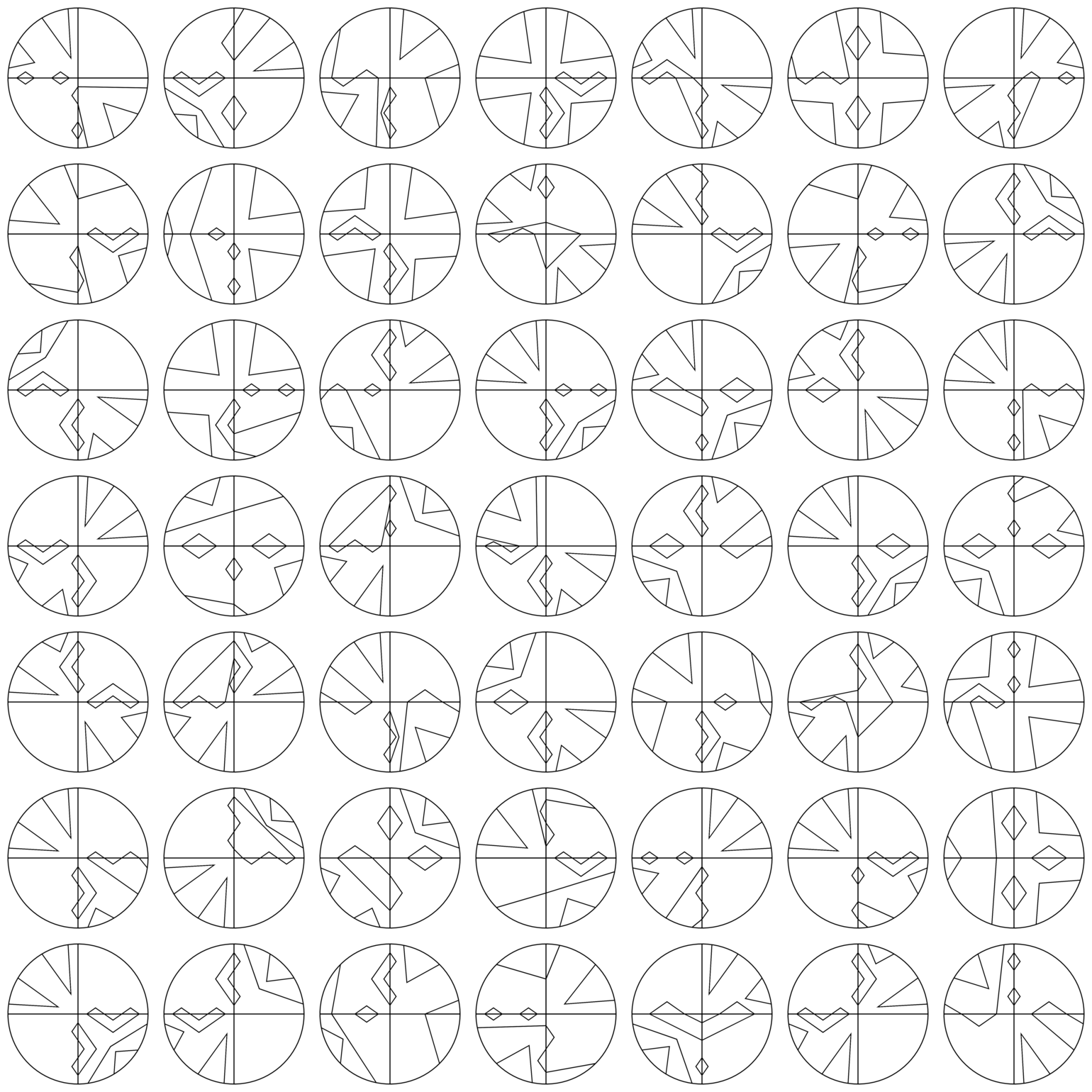}
    \caption{Floatless arrangements of three lines and a quartic which are admissible but not realized.}
    \label{fig:rest_floatless}
\end{figure}

Finally, if we forget one line and look at configurations of two lines and a quartic (Table \ref{tab:xz4}), we are also able to classify all but 39 cases, which are displayed in Figure \ref{fig:quartic_xz_rest}.

\begin{table}
    \centering
    \caption{\textbf{ \bm{$(\calL_x,\calL_z,\calC)$}, \bm{$\deg(\calC)=4$}}}
    \begin{tabular}{ c|c|c|c|c|c|c|c }
        \# Combinatorial curves & \multicolumn{6}{|c|}{Quartic type} & \multirow{3}*{Total} \\
        (Up to $i$-automorphism) & \multirow{2}*{\qtA} & \multirow{2}*{\qtB} & \multirow{2}*{\qtC} & \multirow{2}*{\qtD} & \multirow{2}*{\qtE} & \multirow{2}*{\qtF} \\
        {[Up to automorphism]} & & & & & & & \\
        \hline
        \multirow{3}{9em}{Admissible} & 1 & 97 & 234 & 376 & 519 & 151 & 1378 \\
        & (1) & (32) & (74) & (117) & (161) & (52) & (437) \\
        & {[1]} & {[20]} & {[47]} & {[73]} & {[101]} & {[31]} & {[273]} \\
        \hline
        \multirow{3}{9em}{Realized} & 1 & 97 & 226 & 324 & 389 & 151 & 1188 \\
        & (1) & (32) & (72) & (103) & (125) & (52) & (385) \\
        & {[1]} & {[20]} & {[45]} & {[62]}  & {[75]} & {[31]} & {[234]} \\
        \hline
        \multirow{3}{9em}{Difference} & 0 & 0  &  8 &  52 & 130 &   0 &  190 \\
        & (0) &  (0)  & (2)  & (14)  & (36) &  (0)  & (52) \\
        & {[0]}  & {[0]} &  {[2]} & {[11]} &  {[26]} &  {[0]} &  {[39]}
    \end{tabular}
    \label{tab:xz4}
\end{table}

\begin{figure}
    \centering
    \includegraphics[width=0.9\linewidth]{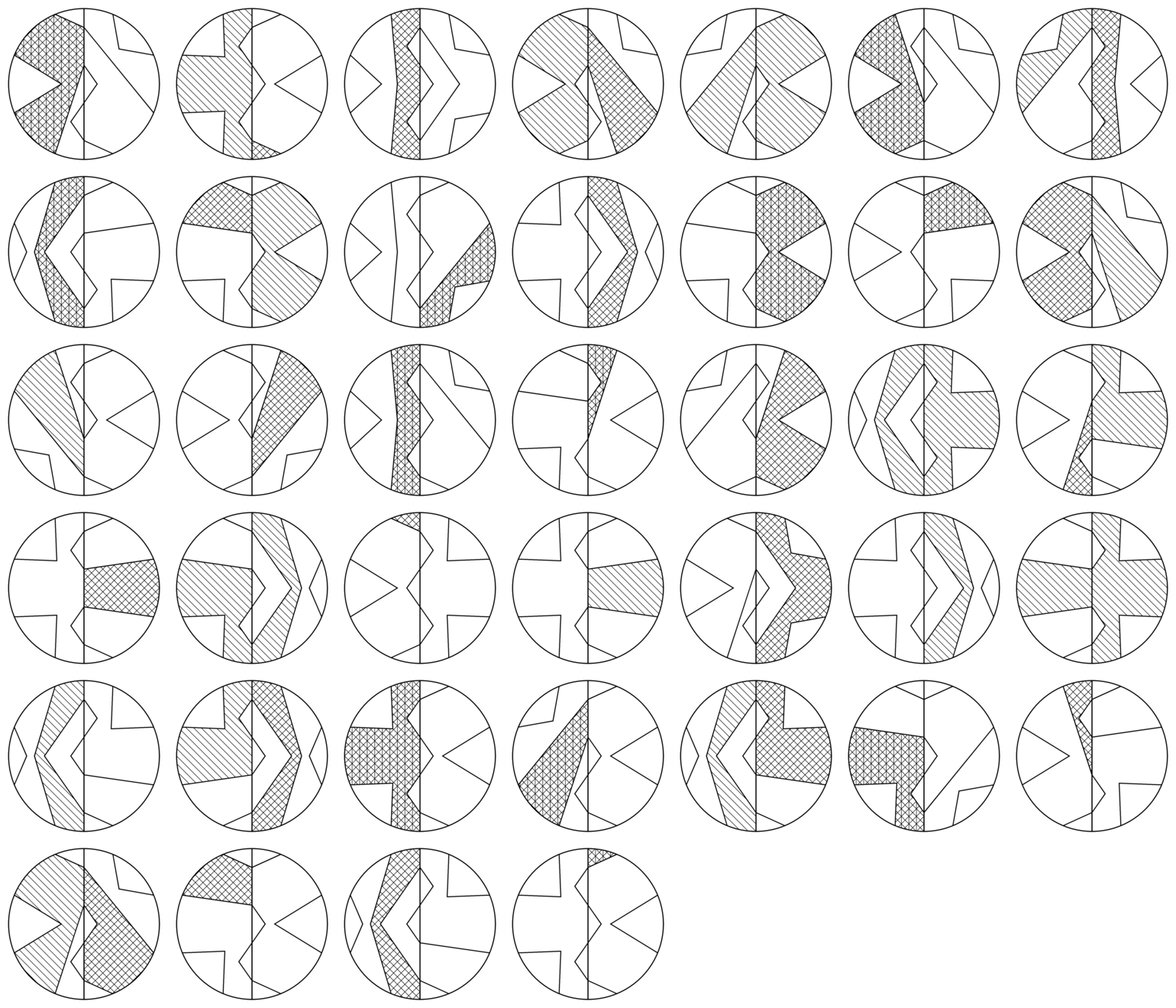}
    \caption{Arrangements of two lines and a quartic curve which are admissible but not realized. The shading represents the number of floating ovals in each ground region.}
    \label{fig:quartic_xz_rest}
\end{figure}

It is important to note that for many of these arrangements, it would not be hard to realize them by hand; rather, we lack a systematic way of producing a realization, or a new obstruction to realizability that would exclude them.

\clearpage

\bibliographystyle{abbrv}
\bibliography{bibliography}

\end{document}